   \edef\Gin@extensions{\Gin@extensions,.mps}
\tikzset{
  column sep/.code=\def\pgfmatrixcolumnsep{\pgf@matrix@xscale*(#1)},
  row sep/.code   =\def\pgfmatrixrowsep{\pgf@matrix@yscale*(#1)},
  matrix xscale/.code=%
    \pgfmathsetmacro\pgf@matrix@xscale{\pgf@matrix@xscale*(#1)},
  matrix yscale/.code=%
    \pgfmathsetmacro\pgf@matrix@yscale{\pgf@matrix@yscale*(#1)},
  matrix scale/.style={/tikz/matrix xscale={#1},/tikz/matrix yscale={#1}}}
\def\pgf@matrix@xscale{1}
\def\pgf@matrix@yscale{1}
\newtheorem{theorem}{Theorem}
\newtheorem{lemma}{Lemma}[section]
\newtheorem{corollary}{Corollary}
\newtheorem{claim}{Claim}[section]
\newtheorem*{claim*}{Claim}
\newtheorem{hypothesis}{Hypothesis}[section]
\newtheorem{property}{Property}[section]
\newtheorem*{theorem*}{Theorem}
\newtheorem*{corollary*}{Corollary}
\theoremstyle{definition}
\newtheorem{definition}{Definition}[section]
\newtheorem{example}{Example}[section]
\newtheorem{notation}{Notation}[section]
\newenvironment{manualtheorem}[1]{%
  \manualtheoreminner \itshape
}{\endmanualtheoreminner}
\newenvironment{manuallemma}[1]{%
  \manuallemmainner \itshape
}{\endmanuallemmainner}
\newenvironment{manualclaim}[1]{%
  \manualclaiminner \itshape
}{\endmanualclaiminner}
\newenvironment{manualproperty}[1]{%
  \manualpropertyinner \itshape
}{\endmanualpropertyinner}
\algnewcommand\algorithmicinput{\textbf{Input:}}
\algnewcommand\INPUT{\item[\algorithmicinput]}
\algnewcommand\algorithmicoutput{\textbf{Output:}}
\algnewcommand\OUTPUT{\item[\algorithmicoutput]}
\algnewcommand\algorithmicproc{\textbf{Procedure:}}
\algnewcommand\PROCEDURE{\item[\algorithmicproc]}
\algnewcommand\algorithmiccomplexity{\textbf{Complexity:}}
\algnewcommand\COMPLEXITY{\item[\algorithmiccomplexity]}
\newlength{\continueindent}
\newcommand*{\ALG@customparshape}{\parshape 2 \leftmargin \linewidth \dimexpr\ALG@tlm+\continueindent\relax \dimexpr\linewidth+\leftmargin-\ALG@tlm-\continueindent\relax}
\apptocmd{\ALG@beginblock}{\ALG@customparshape}{}{\errmessage{failed to patch}}
\theoremstyle{remark}
\newtheorem{remark}{Remark}[section]
\theoremstyle{observation}
\definecolor{DarkBlue}{rgb}{0,0.1,0.55}
\numberwithin{equation}{section}
\newcommand {\hide}[1]{}
\newcommand {\junk}[1]{}
\newcommand {\R} {\mathrm{R}}
\newcommand {\D}     {\mbox{\rm D}}
\newcommand {\Sphere}{\mbox{${\bf S}$}}     
\newcommand {\Z}  {\mathbb{Z}}
\newcommand {\Q}         {\mathbb{Q}}
\newcommand {\RR} {{\mathcal R}}
\newcommand {\la}   {{\langle}}
\newcommand {\ra}   {{\rangle}}
\newcommand {\eps} {{\varepsilon}}
\newcommand {\E} {{\rm ext}}
\newcommand{\card}{\mathrm{card}}
\newcommand{\rank}{\mathrm{rank}}
\def\addots{\mathinner{\mkern1mu
\raise1pt\vbox{\kern7pt\hbox{.}}
\mkern2mu\raise4pt\hbox{.}\mkern2mu
\raise7pt\hbox{.}\mkern1mu}}
\newcommand{\HH}  {\mbox{\rm H}}
\newcommand{\Hom}{\mathrm{Hom}}
\newcommand\colim{\mathrm{colim}}
\newcommand{\level}{\mathrm{level}}
\newcommand{\Top}{\mathbf{Top}}
\newcommand{\hocolim}{\mathrm{hocolim}}
\newcommand{\Nerve}{\mathcal{N}}
\newcommand{\Simp}{\mathbf{Simp}}
\newcommand{\Comp}{\mathbf{size}}
\newcommand{\Def}{\mathrm{Def}}
\newcommand{\length}{\mathrm{length}}
\begin{document}
\title[Efficient simplicial replacement of semi-algebraic sets]
{
Efficient simplicial replacement of semi-algebraic sets
}
\author{Saugata Basu}
\address{Department of Mathematics,
Purdue University, West Lafayette, IN 47906, U.S.A.}
\email{sbasu@math.purdue.edu}

\author{Negin Karisani}
\address{Department of Computer Science, 
Purdue University, West Lafayette, IN 47906, U.S.A.}
\email{nkarisan@cs.purdue.edu}

\subjclass{Primary 14F25; Secondary 68W30}
\date{\textbf{\today}}
\keywords{semi-algebraic sets, simplicial complex, homotopy groups}
\thanks{
Basu was  partially supported by NSF grants
CCF-1618918, DMS-1620271 and CCF-1910441.
}
\begin{abstract}
Designing an algorithm with a singly exponential complexity for computing semi-algebraic triangulations
of a given semi-algebraic set has been a holy grail in algorithmic semi-algebraic geometry.
More precisely, given a description of a semi-algebraic set $S \subset \mathbb{R}^k$ 
by a first order quantifier-free formula in the language of the reals, 
the goal is to output a simplicial complex $\Delta$, whose geometric
realization, $|\Delta|$,  is semi-algebraically homeomorphic to $S$. 
In this paper we consider a weaker version of this question.
We prove that  for any $\ell \geq  0$, there exists an algorithm which takes as input a description of a semi-algebraic subset  
$S \subset \mathbb{R}^k$ given by
a quantifier-free first order formula $\phi$ in the language of the reals, and produces as output a simplicial complex $\Delta$,
whose geometric realization, $|\Delta|$ is $\ell$-equivalent to $S$. 
The complexity of our algorithm is bounded by
$(sd)^{k^{O(\ell)}}$, where $s$ is the number of polynomials appearing in the formula $\phi$, and $d$ a bound
on their degrees. For fixed $\ell$, this bound is \emph{singly exponential} in $k$. 
In particular, since $\ell$-equivalence 
implies that the \emph{homotopy groups} up to dimension $\ell$  of $|\Delta|$ are isomorphic to those of $S$,
we obtain a reduction (having singly exponential complexity) of the problem of computing the first $\ell$ homotopy groups of $S$ to the 
combinatorial problem of computing the first $\ell$ homotopy groups of a finite simplicial complex of size 
bounded  by $(sd)^{k^{O(\ell)}}$.

\end{abstract}
\maketitle

\tableofcontents

\section{Introduction}
\subsection{Background}
Let $\R$ be a real closed field and $\D$ an ordered domain contained in $\R$.

The problem of effective computation of topological properties of semi-algebraic subsets of $\R^k$ has a long
history.  Semi-algebraic subsets of $\R^k$ are subsets defined by first-order formulas in the language of ordered
fields (with parameters in $\R$). Since the first-order theory of real closed fields admits 
quantifier-elimination, we can assume that each semi-algebraic subset $S \subset \R^k$ is defined by some 
\emph{quantifier-free} formula $\phi$. A quantifier-free formula $\phi(X_1,\ldots,X_k)$ in the 
language of ordered fields with parameters in $\D$, is a formula with atoms of the form $P = 0, P > 0, P < 0$,
$P \in \D[X_1,\ldots,X_k]$.

Semi-algebraic subsets of $\R^k$ have tame topology. In particular, closed and bounded semi-algebraic subsets
of $\R^k$ are semi-algebraically triangulable (see for example \cite[Chapter 5]{BPRbook2}). 
This means that there exists a finite simplicial complex $K$, whose 
\emph{geometric realization}, $|K|$,  considered as a subset of $\R^N$ for some $N >0$, is semi-algebraically homeomorphic to $S$. 
The semi-algebraic homeomorphism
$|K| \rightarrow S$ is called a \emph{semi-algebraic triangulation of $S$}. All topological
properties of $S$ are then encoded in the finite data of the simplicial complex $K$.

For instance, taking $\R = \mathbb{R}$,
the (singular) homology groups, $\HH_*(S)$, of $S$ are isomorphic to the simplicial homology groups
of the simplicial chain complex $\mathrm{C}_\bullet(K)$ of the simplicial complex $K$, and the latter is 
a complex of free $\Z$-modules having finite ranks (here and elsewhere in the paper, unless stated otherwise,  all homology and cohomology groups
are with coefficients in $\Z$). 

The problem of designing an efficient algorithm for obtaining semi-algebraic
triangulations has attracted a lot of attention over the years.
One reason behind this is that once we have such a triangulation, we 
can then compute discrete topological invariants, such as the 
ranks of the homology groups (i.e. the Betti numbers) of the given semi-algebraic set
with just some added linear algebra over $\Z$. 
     
There exists a classical algorithm
which takes as input a quantifier-free formula defining a semi-algebraic set $S$, and 
produces as output a semi-algebraic triangulation of $S$  (see for instance \cite[Chapter 5]{BPRbook2}).  
However,  this algorithm is based on the technique of \emph{cylindrical algebraic decomposition}, and hence 
the complexity of this algorithm is prohibitively expensive,  being doubly exponential in $k$. 
More precisely, given a description by a quantifier-free formula involving 
$s$ polynomials of degree at most $d$, of a closed and bounded semi-algebraic subset of $S \subset \R^k$, there
exists an algorithm computing a semi-algebraic triangulation of $h: |K| \rightarrow S$, whose complexity
is bounded by $(s d)^{2^{O(k)}}$. Moreover, the size of the simplicial complex $K$ (measured by the number of 
simplices) is also bounded by $(s d)^{2^{O(k)}}$.

\subsubsection{Doubly exponential vs singly exponential} 
One can ask whether the doubly exponential behavior for the semi-algebraic triangulation problem is intrinsic
to the problem. One reason to think that it is not so comes from the fact that the ranks of the homology groups of 
$S$ (following the same notation as in the previous paragraph), 
and so in particular those of the simplicial complex $K$, is bounded by
$(O(sd))^k$ (see for instance \cite[Chapter 7]{BPRbook2}), 
which is singly exponential in $k$. So it is natural to ask if this singly exponential upper bound
on $\rank(\HH_*(S))$ is ``witnessed''  by an efficient semi-algebraic triangulation of small (i.e. singly exponential) size.
This is not known.

In fact, designing an algorithm with a \emph{singly exponential} complexity 
for computing a semi-algebraic triangulation of a given semi-algebraic set 
has remained a holy grail in the field of algorithmic real algebraic geometry and little progress has been made over the last 
thirty years on this problem (at least for general semi-algebraic sets). 
We note here that designing algorithms with singly exponential complexity has being a \emph{leit motif}  in the research in algorithmic semi-algebraic geometry over the past decades -- starting from the so called ``critical-point method'' which resulted in algorithms for testing emptiness, connectivity, computing the Euler-Poincar\'e characteristic, as well as for the first few Betti numbers of semi-algebraic sets (see \cite{Basu-survey} for a history of these developments and contributions of many authors). More recently, such algorithms has also been 
developed in other (more numerical) models of computations \cite{ BCL2019,BCT2020.1, BCT2020.2}
(we discuss the connection of these works with the results presented in this paper in Section~\ref{subsec:prior}).

\subsubsection{Triangulation vs simplicial replacement}
While the problem of designing an algorithm with singly exponential complexity for the problem of semi-algebraic triangulation is completely open,
there has been some progress in designing efficient algorithms for certain related problems. As mentioned above
a semi-algebraic triangulation of a closed and bounded semi-algebraic set $S$ produces a finite simplicial complex, which
encodes all topological properties (i.e. which are homeomorphism invariants) of $S$. It is well known that homeomorphism
invariants are notoriously difficult to compute (for instance, it is an undecidable problem to determine whether two 
simplicial complexes are homeomorphic \cite{Markov2}). What is much more computable are the homology groups
of semi-algebraic sets. Homology groups are in fact homotopy (rather than homeomorphism) invariants. Homotopy equivalence is a much weaker equivalence relation compared to homeomorphism.  
In the absence of  a singly exponential complexity triangulation of semi-algebraic sets, 
it is reasonable to ask for an algorithm
which given a semi-algebraic set $S \subset \R^k$ described by a quantifier-free formula involving
$s$ polynomials of degrees bounded by $d$, computes a simplicial complex $K$, such that
its geometric realization $|K|$ is \emph{homotopy equivalent} to $S$ having complexity bounded by
$(sd)^{k^{O(1)}}$.
We will call such a simplicial complex a \emph{simplicial replacement} of the semi-algebraic set $S$.

The main results of this paper can be summarized as follows. The precise statements appear in the 
next section after the necessary definitions of various objects some of which are a bit technical.

\subsection{Summary of results}
In the statements below  $\ell \in \Z_{\geq 0}$ is a fixed constant.

\begin{theorem*}[cf. Theorems~\ref{thm:alg} and \ref{thm:alg'} below]
Given any closed semi-algebraic subset of $S \subset \R^k$, there exists a simplicial complex $K$
homologically $\ell$-equivalent to $S$ whose size is bounded singly exponentially in $k$ (as a function of the number and degrees of polynomials appearing in the description of $S$).
If  $\R = \mathbb{R}$, then $K$ is $\ell$-equivalent to $S$.
Moreover, there exists an algorithm  (Algorithm~\ref{alg:simplicial-replacement}) which computes the complex $K$ given $S$, and whose complexity is bounded singly exponentially in $k$.
\end{theorem*}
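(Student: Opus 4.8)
The plan is to build the complex $K$ by a "descent on dimension" / spectral-sequence argument, mimicking how one computes the first $\ell$ Betti numbers of a semi-algebraic set in singly exponential time, but upgrading the bookkeeping from chain-level data to genuine simplicial sets so that homotopy type (not just homology) is captured up to degree $\ell$. First I would reduce to the case where $S$ is a closed and bounded semi-algebraic set: standard tricks (replacing $\R^k$ by a large sphere, or intersecting with a large ball $\B_k(0,R)$ for infinitesimally/sufficiently large $R$, and using the local conic structure at infinity) let us pass from an arbitrary closed $S$ to a closed bounded one without changing the homotopy type, at singly exponential cost. Next I would put $S$ in a convenient form — a closed set defined by a conjunction of weak inequalities $P_i \geq 0$ — and cover it by a "small" family of closed semi-algebraic subsets $\{S_\alpha\}$, each of which is contractible or has controlled topology, with the key point being that one only needs to control intersections of at most $\ell+2$ of the $S_\alpha$'s (because the Mayer–Vietoris / nerve spectral sequence only sees $(\ell+1)$-fold overlaps when computing the first $\ell$ homotopy or homology groups). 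This is where the iterated-exponent $k^{O(\ell)}$ comes from: building and describing $(\ell+1)$-fold intersections of sets themselves defined using quantifier elimination-type constructions blows up the parameters a bounded (depending on $\ell$) number of times.

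The heart of the construction is a \emph{homotopy colimit / nerve} argument. Having produced a good cover $\{S_\alpha\}_{\alpha \in A}$ whose $(\ell+1)$-fold intersections are each either empty or semi-algebraically contractible (a "good cover up to degree $\ell$"), one forms the nerve $\Nerve_{\leq \ell+1}(\{S_\alpha\})$ truncated at dimension $\ell+1$; the nerve lemma (in its homotopy-colimit form, \`a la Segal/Bousfield–Kan) gives that $|\Nerve(\{S_\alpha\})|$ is $\ell$-equivalent to $S$, and only the $\leq (\ell+1)$-skeleton of the nerve is needed for this, so one never has to compute the full (exponentially large in a bad way) nerve. To actually \emph{decide} emptiness and contractibility of the relevant intersections algorithmically, I would invoke the existing singly-exponential algorithms from the critical-point method: testing emptiness is the classical existential-theory-of-the-reals algorithm, and testing contractibility can be done — for the specially structured sets arising here — by reducing to connectivity/homology computations in low degree, again via singly-exponential algorithms (e.g. computing the first $\ell$ Betti numbers, plus simple-connectivity checks). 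Assembling all of this: the algorithm computes the cover, runs the decision procedures on all $\leq(\ell+1)$-fold intersections, writes down the resulting truncated simplicial complex, and outputs it; the total complexity is a bounded (in $\ell$) composition of singly-exponential-in-$k$ steps, hence $(sd)^{k^{O(\ell)}}$.

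The main obstacle — and the place where real work is needed rather than citation — is producing the good cover itself with the right complexity and proving it is good \emph{up to degree $\ell$}. Naively covering by sign conditions of the $P_i$ gives cells, not contractible closed sets, and the honest semi-algebraic triangulation that would make the cover trivially good is exactly the doubly-exponential object we are trying to avoid. The resolution is to use an \emph{approximation} scheme: thicken $S$ by infinitesimals to a closed regular set, and cover it by the basic closed sets cut out by critical values of projections (a "deformation to the diagonal"/roadmap-type construction), so that each piece deformation-retracts onto a fiber which is inductively a simplicial replacement in one lower dimension. Controlling that these pieces and their $(\ell+1)$-fold intersections remain contractible forces the induction hypothesis to be not just "homotopy type of $S$" but "$\ell$-equivalence type of all small intersections", which is why the exponent is $k^{O(\ell)}$ and not $k^{O(1)}$. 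The second, more technical obstacle is the passage $\R \neq \mathbb{R}$: over a general real closed field "$\ell$-equivalence" is not available (homotopy groups are not defined), so one proves only \emph{homological} $\ell$-equivalence there, via the Mayer–Vietoris spectral sequence of the cover, and then uses the Artin–Comparison / Tarski transfer principle together with the fact that over $\mathbb{R}$ a simply-connected-up-to-$\ell$ space with the right homology has the right homotopy (a Whitehead-type argument) to upgrade to genuine $\ell$-equivalence in the case $\R = \mathbb{R}$.
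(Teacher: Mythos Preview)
Your plan hinges on producing a ``good cover up to degree $\ell$'' --- a cover $\{S_\alpha\}$ whose $(\ell+1)$-fold intersections are contractible --- and then applying the nerve lemma. This is precisely the step that is \emph{not} known to be achievable in singly exponential time, and the paper says so explicitly: controlling the connectivity of the intersections (the $\ell$-Leray property) is the hard part, and no singly exponential algorithm for producing $\ell$-Leray covers is known for $\ell \geq 1$. Your proposed fix in the last paragraph (thicken, cover by pieces cut out by critical values of projections, induct on ambient dimension) does not address this: nothing in that sketch forces pairwise or higher intersections of the pieces to be contractible, and roadmap-type constructions do not have that property.

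The paper's key idea is to \emph{avoid} the Leray condition entirely. One uses only the much weaker input --- a black box producing covers whose \emph{individual elements} are $\ell$-connected (this is what \cite{BPRbettione} provides) --- and then compensates for the lack of control on intersections by a recursion on the connectivity parameter $m$ rather than on the ambient dimension $k$. Concretely: for each intersection of up to $m+2$ cover elements (which may have arbitrary topology), one calls the black box \emph{again} to cover that intersection by $\ell$-connected pieces, and recurses with $m$ replaced by $m - \card(I)+1$. This produces an iterated poset $\mathbf{P}_{m,i}(\Phi)$ whose order complex is the desired $K$; the depth of the recursion is $O(m)$, which is where the exponent $k^{O(\ell)}$ comes from. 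The homological/homotopical equivalence is then proved not by the nerve lemma but by a homotopy-colimit argument over this poset together with a Vietoris--Begle/Smale analysis of the fibers. Your outline does not contain this iterated-cover idea, and without it the plan as written does not go through.
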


The problem of designing  efficient (symbolic and exact) algorithms for computing the Betti numbers 
of semi-algebraic sets have been considered before, and algorithms with singly exponential complexity was given for computing the first (resp. the first $\ell$ for any fixed $\ell$) Betti numbers
in \cite{BPRbettione} (resp. \cite{Bas05-first}). The algorithm given in the \cite{BPRbettione} (resp. \cite{Bas05-first}) computes a complex of vector spaces having
isomorphic homology (with coefficients in $\Q$) up to dimension one  (resp. $\ell$) 
as that of the given semi-algebraic set. However,
information with regards to homotopy is lost. The algorithm implicit in the theorem stated above
produces a simplicial complex having the same homotopy type up to dimension $\ell$ as the given
semi-algebraic set.  Thus the above theorem can be viewed as a homotopy-theoretic generalization
of the results in \cite{BPRbettione} and \cite{Bas05-first}.

The above theorem can be used for the problem of computing the homotopy groups 
of semi-algebraic sets. Homotopy groups are  much finer invariants than homology groups but are
also more difficult to compute. In fact the problem of deciding whether the first homotopy group 
(i.e. the fundamental group) of a semi-algebraic set defined over $\mathbb{R}$ 
is trivial or not is an undecidable problem.
Nevertheless, using the above theorem  
we have the following corollary which gives an algorithmic reduction 
having singly exponential complexity of the problem of computing the first $\ell$ homotopy groups
of a given closed semi-algebraic set to a purely combinatorial problem.
 
\begin{corollary*}[cf. Corollaries~\ref{cor:alg':1} and \ref{cor:alg':2} below]
Let $\R = \mathbb{R}$,
There exists a reduction having singly exponential complexity,  of the problem of computing the
first $\ell$ homotopy groups of any given closed semi-algebraic subset $S \subset \R^k$, to the 
problem of computing the first $\ell$  homotopy groups of a finite simplicial complex.  
This implies that there exists an algorithm with singly exponential complexity
which given as input a  closed semi-algebraic set $S \subset \mathbb{R}^k$ 
guaranteed to be simply connected, outputs
the description of the first $\ell$ homotopy groups of $S$ (in terms of generators and relations).
\end{corollary*}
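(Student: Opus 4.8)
The plan is to take the simplicial replacement produced by Algorithm~\ref{alg:simplicial-replacement} (Theorems~\ref{thm:alg} and~\ref{thm:alg'}) to \emph{be} the reduction, and then, under the extra hypothesis of simple connectivity, to post-compose it with a known algorithm for the homotopy groups of a simply connected finite simplicial complex. For the reduction: given a closed semi-algebraic $S\subset\mathbb{R}^k$ described by a quantifier-free formula with $s$ polynomials of degree at most $d$, I would run Algorithm~\ref{alg:simplicial-replacement} to obtain a finite simplicial complex $K$ with $|K|$ $\ell$-equivalent to $S$; by the cited theorems this costs $(sd)^{k^{O(\ell)}}$ and $K$ has at most $(sd)^{k^{O(\ell)}}$ simplices. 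By the defining property of $\ell$-equivalence, $\pi_i(|K|)\cong\pi_i(S)$ for all $0\le i\le\ell$ (a bijection of pointed sets for $i=0$), so the first $\ell$ homotopy groups of $S$ are literally those of the explicitly computed $K$. This is the asserted reduction, and its cost is that of Algorithm~\ref{alg:simplicial-replacement}, i.e. singly exponential in $k$. (This does not contradict the undecidability of triviality of $\pi_1$: the target problem, computing $\pi_i$ of an arbitrary finite simplicial complex, is itself undecidable for $i=1$.)

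Now suppose $S$ is guaranteed to be simply connected, so that $S$ is path-connected with $\pi_1(S)=0$; then $|K|$ is path-connected with $\pi_1(|K|)=0$, i.e. $K$ is a simply connected finite simplicial complex. I would then feed $K$ to any known algorithm that, on input a simply connected finite simplicial complex and a fixed $\ell$, outputs $\pi_2(|K|),\ldots,\pi_\ell(|K|)$ as finitely generated abelian groups given by generators and relations (equivalently, by their invariant factors) --- for instance E.~H.~Brown's finite-computability theorem for Postnikov systems, or, with much better complexity, the algorithms based on effective homology, which for each fixed $\ell$ run in time polynomial in the size of the input; the minor preprocessing such methods may need (for example passing to a $1$-reduced model by collapsing a spanning tree of the $1$-skeleton) inflates the size by at most a constant factor. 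Since $K$ has $N=(sd)^{k^{O(\ell)}}$ simplices and this step runs in time $N^{O(1)}$ for fixed $\ell$, the total time is again $(sd)^{k^{O(\ell)}}$, singly exponential in $k$. Appending the trivial group $\pi_1(S)=0$ (and noting $\pi_0(S)$ is a point) gives the required presentations of the first $\ell$ homotopy groups of $S$.

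The hard work is already behind us: all the topological content sits in the existence of the singly exponential simplicial replacement $K$. The one place needing care is that the simple-connectivity promise really does pass from $S$ to $|K|$: for $\ell\ge 1$ this is immediate from the induced isomorphism on $\pi_1$, but if $\ell$-equivalence is witnessed by a zig-zag of maps rather than a single map, one must observe that each leg of the zig-zag induces an isomorphism on $\pi_1$, so $\pi_1(|K|)=0$ still follows. The other point to verify is the complexity bookkeeping, namely that the homotopy-group algorithm for simply connected complexes is polynomial in the size of its input, so that composing it with our singly exponential reduction leaves the order of growth unchanged. Neither point is a genuine obstacle.
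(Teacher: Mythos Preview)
Your proposal is correct and is essentially identical to the paper's own argument: the paper derives the reduction directly from Theorem~\ref{thm:alg'} (the simplicial replacement), and for the simply connected case invokes the polynomial-time algorithm of \v{C}adek et al.\ for the homotopy groups of a $1$-connected finite complex, exactly as you do. Your additional remark that simple connectivity transfers through the zig-zag of $\ell$-equivalences (each leg inducing an isomorphism on $\pi_1$) is a valid point the paper leaves implicit.
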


The algorithmic results mentioned above are consequences of a topological construction which can
be interpreted as a generalization of the classical ``nerve lemma''  in topology. We state it here informally.

Assume that there exists a ``black-box'' that given as input any closed semi-algebraic set $S \subset \R^k$,
produces as output a cover of $S$ by closed semi-algebraic subsets of $S$ which are 
homologically
$\ell$-connected.

\begin{theorem*}[cf. Theorem~\ref{thm:main} below]
Given a black-box as above,
there exists for every closed semi-algebraic set $S$ a poset $\mathbf{P}(S)$ 
(see Definition~\ref{def:poset} below)
which depends on the given black-box, of controlled complexity (both in terms of the description of $S$ and the 
complexity of the black-box), such that the geometric realization of the order-complex of $\mathbf{P}(S)$ is 
homologically
$\ell$-equivalent to $S$.  
\end{theorem*}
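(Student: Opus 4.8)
The plan is to build $\mathbf{P}(S)$ by iterating the black-box, to exhibit $|\Delta(\mathbf{P}(S))|$ as a homotopy colimit which collapses onto $S$ on one side and is assembled from order complexes of smaller such posets on the other, and then to run an induction on $\ell$ using a higher-order nerve lemma. First I would reduce to the case where $S$ is closed and bounded (intersect with a large ball, or pass to a bounded model over a real closed extension). Apply the black-box to $S$ to obtain a finite cover $\{S_\alpha\}_{\alpha\in A}$ by $\ell$-connected closed semi-algebraic sets, with nerve $N_0$; for each $j$-simplex $\sigma\in N_0$ (so $|\sigma|=j+1$ and $S_\sigma:=\bigcap_{\alpha\in\sigma}S_\alpha\neq\emptyset$), recurse by running the same construction on $S_\sigma$ with the reduced target $\ell-j$, stopping a branch once the target drops below $0$ --- which happens after at most $\ell+1$ rounds. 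The poset $\mathbf{P}(S)$ (cf. Definition~\ref{def:poset}) has as elements the flags $p=(\sigma_0,\dots,\sigma_{m-1})$ produced this way with $\sum_i(|\sigma_i|-1)\le\ell$, ordered by refinement, and each $p$ carries the iterated intersection $\mathcal{T}(p):=S_{\sigma_0,\dots,\sigma_{m-1}}$; since the structure maps are inclusions, $\mathcal{T}$ is a diagram of closed semi-algebraic sets on $\mathbf{P}(S)^{\opp}$. Each round multiplies the number and degrees of the defining polynomials by the amounts controlled by the black-box, and there are $O(\ell)$ rounds, so $|\mathbf{P}(S)|$ and the bit-sizes of the $\mathcal{T}(p)$ stay singly exponential in $k$; this is the ``controlled complexity'' clause.

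Next I would set up the comparison. On the one hand $|\Delta(\mathbf{P}(S))|=\hocolim_{\mathbf{P}(S)^{\opp}}\underline{\pt}$, so collapsing $\mathcal{T}$ to the point gives a map $c\colon\hocolim_{\mathbf{P}(S)^{\opp}}\mathcal{T}\to|\Delta(\mathbf{P}(S))|$. On the other hand, choosing (non-effectively, from the semi-algebraic triangulation theorem) a triangulation of $S$ adapted simultaneously to all the finitely many sets $\mathcal{T}(p)$, the diagram $\mathcal{T}$ becomes a diagram of inclusions of subcomplexes, hence Reedy cofibrant with respect to the grading of $\mathbf{P}(S)$, so the canonical map $\hocolim_{\mathbf{P}(S)^{\opp}}\mathcal{T}\to\colim\mathcal{T}=\bigcup_\alpha S_\alpha=S$ is a homotopy equivalence. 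Thus everything reduces to showing that $c$ is an $\ell$-equivalence.

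This is the heart of the matter, and I would prove it by induction on $\ell$. The base case $\ell=0$ is clear: $\mathbf{P}(S)$ is the face poset of $N_0$, and $|\Delta(\mathbf{P}(S))|=|N_0|$ has the same connected components as $S$ because the $S_\alpha$ are connected and cover $S$. The subtlety in the inductive step is that $c$ is \emph{not} the homotopy colimit of an objectwise $\ell$-equivalence --- $\mathcal{T}(p)$ need not be $\ell$-connected --- so one has to exploit that the flags $p$ at which $\mathcal{T}(p)$ is far from $\ell$-connected sit deep in $\mathbf{P}(S)$. Concretely: by the Fubini theorem for homotopy colimits, $\hocolim_{\mathbf{P}(S)^{\opp}}\mathcal{T}$ is the homotopy colimit over the face poset of $N_0$ of $\sigma\mapsto\hocolim\mathcal{T}^{(\sigma)}$, where $\mathcal{T}^{(\sigma)}$ is the diagram of the recursion at $S_\sigma$; the induction hypothesis applied to $S_\sigma$ gives that $\hocolim\mathcal{T}^{(\sigma)}\to|\Delta(\mathbf{P}_{\ell-j}(S_\sigma))|$ is an $(\ell-j)$-equivalence while $\hocolim\mathcal{T}^{(\sigma)}\simeq S_\sigma$. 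Feeding this into the generalized ($\ell$-th order) nerve lemma --- for a closed cover, a $(j+1)$-fold intersection need only be $(\ell-j)$-connected for the nerve to be $\ell$-equivalent to the union, a condition that becomes vacuous once $j>\ell$, which is exactly why the budget $\sum_i(|\sigma_i|-1)\le\ell$ is enough --- shows that the homotopy colimit over $N_0$ of $\sigma\mapsto|\Delta(\mathbf{P}_{\ell-j}(S_\sigma))|$ is $\ell$-equivalent to $S$. Finally one identifies this homotopy colimit of order complexes with $|\Delta(\mathbf{P}(S))|$ itself, using Thomason's theorem that a homotopy colimit of nerves is the nerve of the corresponding Grothendieck construction; here that construction is precisely $\mathbf{P}(S)$ by its recursive definition.

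The main obstacle is the generalized higher-order nerve lemma in exactly this shape --- relative, poset-indexed, and only up to $\ell$-equivalence: one must track precisely how the \emph{depth} of a flag (hence the dimension in which its contribution enters the homotopy colimit) compensates the \emph{connectivity deficit} of the iterated intersection placed there, so that all discrepancies are pushed strictly above dimension $\ell$. The clean way to organize this is the Mayer--Vietoris / Bousfield--Kan spectral sequence of the iterated cover, checking that its $E_1$-page coincides with that of $S$ in total degree $\le\ell$; the budget condition is exactly what makes that check go through, and simultaneously what forces the recursion, hence $\mathbf{P}(S)$, to be finite. The remaining points are routine: the semi-algebraic triangulation theorem is invoked only for existence (to obtain cofibrancy of $\mathcal{T}$); the reduction to closed and bounded $S$ is standard; and over a general real closed $\R$ one reads ``$\ell$-equivalent'' through homology via the transfer principle, the genuine homotopy statement being reserved for $\R=\mathbb{R}$.
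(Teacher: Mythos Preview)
Your outline is sound and leads to a valid proof, but the route differs from the paper's in one structural choice. Both arguments place a homotopy colimit over $\mathbf{P}(S)$ between $|\Delta(\mathbf{P}(S))|$ and $S$, but with opposite functors. The paper's functor $D_{m,i}$ assigns to each poset element a single $\ell$-connected cover set (at the appropriate recursive depth), \emph{not} an iterated intersection; this makes $\pi_1:\hocolim D\to|\Delta(\mathbf{P})|$ trivial (fibers are $\ell$-connected, Vietoris--Begle applies) and pushes all the work into $\pi_2:\hocolim D\to S$, whose fibers $\pi_2^{-1}(x)$ are order complexes of certain subposets that the paper shows to be $(m{-}1)$-connected by a local Mayer--Vietoris spectral sequence and induction on $m$ (Claims~\ref{thm:main:proof:claim:1}--\ref{thm:main:proof:claim:5}). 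Your $\mathcal{T}$ instead uses iterated intersections, which reverses the difficulty: $\hocolim\mathcal{T}\to S$ becomes a Reedy-cofibrant colimit after triangulation, while the collapse $c$ to $|\Delta(\mathbf{P}(S))|$ has fibers with no connectivity control, forcing a global argument via Fubini, a Bousfield--Kan comparison, and Thomason. Both approaches run essentially the same spectral sequence and the same induction, just at different places; the paper's version has the advantage of avoiding any appeal to triangulation (important over a general real closed field) and of keeping everything at the level of Vietoris--Begle.

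Two cautions. First, your invocation of ``the generalized nerve lemma --- a $(j{+}1)$-fold intersection need only be $(\ell{-}j)$-connected'' is not the statement you actually use, since your $S_\sigma$ are \emph{not} $(\ell{-}j)$-connected; what you need (and correctly hint at with the Bousfield--Kan remark) is the comparison lemma that a map of $N_0^{\mathrm{op}}$-diagrams which is an $(\ell{-}\dim\sigma)$-equivalence at each $\sigma$ induces an $\ell$-equivalence on $\hocolim$. State and use that, not Bj\"orner's lemma. Second, the paper's black-box covers an infinitesimally enlarged copy of $S$, not $S$ itself (Remark~\ref{rem:cover}); the distinction between $D_{m,i}$ and $D'_{m,i}$ together with Lemma~\ref{lem:thm:main:proof:claim:1} is there precisely to handle this, and your sketch (which tacitly assumes the ideal situation) would need the same infinitesimal bookkeeping to match Theorem~\ref{thm:main} as actually stated.
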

 
\begin{remark}
\label{rem:GV}
In the results stated above we make the assumption that the input semi-algebraic sets are closed. 
Gabrielov and Vorobjov \cite{GaV} gave a construction for replacing an arbitrary semi-algebraic subset of $\mathbb{R}^k$ by a closed and bounded one having homology and homotopy groups  isomorphic to the given semi-algebraic set. 
Even though Gabrielov and Vorobjov proved their result over $\mathbb{R}$, the construction was extended to arbitrary real closed fields (with the approximating set defined over a real closed extension of the ground field).
It is proved in \cite{BPRbook2} (Theorem 7.45), that the approximating set is in fact semi-algebraically homotopy equivalent to the (extension of the)  given set. Using this latter result one could remove the assumption of
being closed and bounded in Theorems~\ref{thm:alg} and \ref{thm:alg'}.
We choose not to do this in this paper in order not to add
yet another layer of technical complication involving a new set of infinitesimals.
\end{remark}

The rest of the paper is organized as follows.
In Section~\ref{sec:precise} we give precise statements of the main results summarized 
above after introducing the necessary definitions regarding the different notions of 
topological equivalence that we use in the paper and also the definition of complexity
of algorithms that we use. In Section~\ref{sec:simplicial-replacement} we define the key
mathematical object (namely, a poset that we associate to any closed covering of a semi-algebraic set)
and prove its main properties (Theorems~\ref{thm:main} and \ref{thm:main'}). 
In Section~\ref{sec:algo} we describe algorithms for computing efficient simplicial replacements
of semi-algebraic sets thereby proving Theorems~\ref{thm:alg} and \ref{thm:alg'}.   Finally, in Section~\ref{sec:conclusion} we state some
open questions and directions for future work in this area.

\section{Precise statements of the main results}
\label{sec:precise}
In this section we will describe in full detail  
the main results summarized in the  previous section. We first 
introduce certain preliminary definitions and notation.
 
\subsection{Definitions of topological equivalence and complexity}

We begin with the precise definitions of the two kinds of 
topological equivalence that we are going to use in this paper.

\subsubsection{Topological  equivalences}
\begin{definition}[$\ell$-equivalences]
\label{def:equivalence-spaces}
We say that a map $f:X \rightarrow Y$ between two topological spaces is an $\ell$-equivalence,
if the induced homomorphisms between the homotopy groups $f_*: \pi_i(X) \rightarrow \pi_i(Y)$ are isomorphisms for $0 \leq i \leq \ell$ \cite[page 68]{Novikov}.
\end{definition}

\begin{remark}
\label{rem:equivalence}
Note that our definition of $\ell$-equivalence deviates a little from the standard one which requires that
homomorphisms between the homotopy groups $f_*: \pi_i(X) \rightarrow \pi_i(Y)$ are isomorphisms for $0 \leq i \leq \ell-1$, and only an epimorphism for $i=\ell$. 
An $\ell$-equivalence in our sense is an $\ell$-equivalence in the traditional sense.
\end{remark}

The relation of $\ell$-equivalence as defined above is not an equivalence relation since it is not 
symmetric. In order to make it symmetric one needs to ``formally invert'' $\ell$-equivalences.

\begin{definition}[$\ell$-equivalent and homologically $\ell$-equivalent]
\label{def:ell-equivalent}
We will say that  \emph{$X$ is $\ell$-equivalent to $Y$}  (denoted $X \sim_\ell Y$), if and only if there exists 
spaces, $X=X_0,X_1,\ldots,X_n=Y$ and $\ell$-equivalences  $f_1,\ldots,f_{n}$ as shown below:
\[
\xymatrix{
&X_1 \ar[ld]_{f_1}\ar[rd]^{f_2} &&X_3\ar[ld]_{f_3} \ar[rd]^{f_4}& \cdots&\cdots&X_{n-1}\ar[ld]_{f_{n-1}}\ar[rd]^{f_{n}} & \\
X_0 &&X_2  && \cdots&\cdots &&  X_n&
}.
\]
It is clear that $\sim_\ell$ is an equivalence relation.

By replacing the homotopy groups, $\pi_i(\cdot)$ with homology groups $\HH_i(\cdot)$
(resp. cohomology groups $\HH^i(\cdot)$ with arrows reversed) in Definitions~\ref{def:equivalence-spaces} and \ref{def:ell-equivalent},
we get the notion of two topological spaces $X,Y$  being \emph{homologically $\ell$-equivalent} (denoted 
$X \overset{h}{\sim}_\ell Y$)
(resp. \emph{cohomologically $\ell$-equivalent} (denoted 
$X \overset{ch}{\sim}_\ell Y$)). 

This is a strictly weaker
equivalence relation, since there are spaces $X$ for which $\HH_1(X) = 0$, but $\pi_1(X) \neq 0$.

We extend the above definitions to $\ell = -1$ by using the convention that 
$X \sim_{-1} Y$ (resp. $X \overset{h}{\sim}_{-1} Y$, $X \overset{ch}{\sim}_{-1} Y$), if and only if $X,Y$ are both non-empty or both empty.
\end{definition}

\begin{definition}[$\ell$-connected and homologically $\ell$-connected]
\label{def:ell-connected}
We say that a topological space  $X$ is \emph{$\ell$-connected}, for $\ell \geq 0$,  if $X$ is connected and 
$\pi_i(X) = 0$ for $0 < i \leq \ell$. We will say that $X$ is $(-1$)-connected if $X$ is non-empty.
We say that $X$ is \emph{homologically $\ell$-connected} if $X$ is connected and 
$\HH_i(X) = 0$ for $0 < i \leq \ell$.
\end{definition}

\begin{definition}[Diagrams of topological spaces]
\label{def:diagram-of-spaces}
A diagram of topological spaces is a functor, $X:J \rightarrow \Top$, from a small category $J$ to $\Top$. 
\end{definition}

We extend Definition~\ref{def:equivalence-spaces} to diagrams of topological spaces.
We denote by $\Top$ the category of topological spaces.

\begin{definition}[$\ell$-equivalence between diagrams of topological spaces]
\label{equivalence-diagrams}
Let $J$ be a small category, and $X,Y: J \rightarrow \Top$ be two functors. We say a natural transformation $f:X \rightarrow Y$ is an $\ell$ equivalence, if
the induced maps, 
\[
f(j)_*: \pi_i(X(j)) \rightarrow \pi_i(Y(j))
\]
are isomorphisms for all $j \in J$ and $0 \leq i \leq \ell$.

We will say that  \emph{a diagram $X:J \rightarrow \Top$ is $\ell$-equivalent to the diagram $Y:J \rightarrow \Top$}  (denoted as before by $X \sim_\ell Y$), if and only if there exists diagrams
 $X=X_0,X_1,\ldots,X_n=Y:J \rightarrow \Top$ and $\ell$-equivalences  $f_1,\ldots,f_{n}$ as shown below:
\[
\xymatrix{
&X_1 \ar[ld]_{f_1}\ar[rd]^{f_2} &&X_3\ar[ld]_{f_3} \ar[rd]^{f_4}& \cdots&\cdots&X_{n-1}\ar[ld]_{f_{n-1}}\ar[rd]^{f_{n}} & \\
X_0 &&X_2  && \cdots&\cdots &&  X_n&
}.
\]
It is clear that $\sim_\ell$ is an equivalence relation.

In the above definition, by replacing the homotopy groups with homology 
(resp. cohomology) groups we obtain the 
notion of homological (resp. cohomological) 
$\ell$-equivalence between diagrams, which we will denote as before
by $\overset{h}{\sim}_\ell$ (resp. $\overset{ch}{\sim}_\ell$). 
\end{definition}

One particular diagram will be important in what follows.

\begin{notation} [Diagram of various unions of a finite number of subspaces]
\label{not:diagram-Delta}
Let $J$ be a finite set, $A$ a topological space, 
and $\mathcal{A} = (A_j)_{j \in J}$ a tuple of subspaces of $A$  indexed by $J$.

For any subset 
$J' \subset J$,
\footnote{In this paper $A \subset B$ will mean $A \cap B = A$ allowing the possibility that $A = B$.
Also, when we denote $\alpha \prec \beta$ in a poset we allow the possibility $\alpha = \beta$, reserving $\alpha \precneq \beta$ to denote $\alpha \prec \beta, \alpha \neq \beta$.
} 
we denote 
\begin{eqnarray*}
\mathcal{A}^{J'} &=& \bigcup_{j' \in J'} A_{j'}, \\
\mathcal{A}_{J'} &=& \bigcap_{j' \in J'} A_{j'}, \\
\end{eqnarray*}

We consider $2^J$ as a category whose objects are elements of $2^J$, and whose only morphisms 
are given by: 
\begin{eqnarray*}
2^J(J',J'') &=& \emptyset  \mbox{ if  } J' \not\subset J'', \\
2^J(J',J'') &=& \{\iota_{J',J''}\} \mbox{  if } J' \subset J''.
\end{eqnarray*} 
We denote by $\Simp^J(\mathcal{A}):2^J \rightarrow \Top$ the functor (or the diagram) defined by
\[
\Simp^J(\mathcal{A})(J') = \mathcal{A}^{J'}, J' \in 2^J,
\]
and
$\Simp^J(\mathcal{A})(\iota_{J',J''})$ is the inclusion map $\mathcal{A}^{J'} \hookrightarrow \mathcal{A}^{J''}$.
\end{notation}

\subsubsection{Definition of complexity of algorithms}
We will use the following notion of ``complexity of an algorithm''  in this paper. We follow the same definition as used in the book \cite{BPRbook2}. 
  
\begin{definition}[Complexity of algorithms]
\label{def:complexity}
In our algorithms we will take as input quantifier-free first order formulas whose terms
are  polynomials with coefficients belonging to an ordered domain $\D$ contained in a real closed field $\R$.
By \emph{complexity of an algorithm}  we will mean the number of arithmetic operations and comparisons in the domain $\D$.
If $\D = \mathbb{R}$, then
the complexity of our algorithm will agree with the  Blum-Shub-Smale notion of real number complexity \cite{BSS}.
In case, $\D = \Z$, then we are able to deduce the bit-complexity of our algorithms in terms of the bit-sizes of the coefficients
of the input polynomials, and this will agree with the classical (Turing) notion of complexity.
\end{definition}

\begin{remark}[Separation of complexity into algebraic and combinatorial parts
\footnote{Note that this notion of separation of complexity into algebraic and combinatorial parts is distinct from that used in \cite{BPRbook2}, where ``combinatorial part'' refers to the part depending on the number of polynomials, and the``algebraic part'' refers to the dependence on the degrees of the polynomials. }
]
\label{rem:complexity}
In the definition of complexity given above we are counting only arithmetic operations involving elements of
the ring generated by the coefficients of the input formulas. Many algorithms in semi-algebraic geometry
have the following feature. After a certain number of operations involving elements of the coefficient ring $\D$, the 
problem is reduced to solving a combinatorial or a linear algebra problem defined over 
$\mathbb{Z}$.

A typical example is an algorithm for computing the Betti numbers of 
a semi-algebraic set via computing a semi-algebraic triangulation.
Once a
simplicial complex whose geometric realization is semi-algebraically homeomorphic to the given semi-algebraic set has been computed, the problem of computing the Betti numbers of the given semi-algebraic set  is reduced to linear algebra over $\mathbb{Z}$. 
Usually,
this separation of the cost of an algorithm into a part that involves arithmetic operations over $\D$, and a
part that is independent of $\D$, is not very important 
since often the complexity of the second part is subsumed by that of the first part. However, in this paper the fact that we are only counting arithmetic operations in $\D$ is more significant. In one application
that we discuss, namely that of computing the homotopy groups of a given semi-algebraic set  (see Corollary~ \ref{cor:alg':1}), 
we give a reduction (having single exponential complexity) to a problem whose definition is independent
of $\D$, namely computing the homotopy groups of a simplicial complex.
Note that the problem of deciding whether the first homotopy  group of a simplicial complex is trivial or not is an undecidable problem (this fact follows from the undecidability of the word problem for groups \cite{Novikov}). 
\end{remark}

\subsubsection{$\mathcal{P}$-formulas and $\mathcal{P}$-semi-algebraic sets}
\begin{notation}[Realizations, $\mathcal{P}$-, $\mathcal{P}$-closed
semi-algebraic sets]
  \label{not:sign-condition} 
  For any finite set of polynomials $\mathcal{P}
  \subset \R [ X_{1} , \ldots ,X_{k} ]$, 
  we call any quantifier-free first order formula $\phi$ with atoms, $P =0, P < 0, P>0, P \in \mathcal{P}$, to
  be a \emph{$\mathcal{P}$-formula}. 
  Given any semi-algebraic subset $Z \subset \R^k$,
  we call the realization of $\phi$ in $Z$,
  namely the semi-algebraic set
  \begin{eqnarray*}
    \RR(\phi,Z) & := & \{ \mathbf{x} \in Z \mid
    \phi (\mathbf{x})\}
  \end{eqnarray*}
  a \emph{$\mathcal{P}$-semi-algebraic subset of $Z$}.
  
  If $Z = \R^k$, we often denote the realization of $\phi$ in $\R^k$ by
  $\RR(\phi)$.
  
  If $\Phi = (\phi_j)_{j \in J}$ is a tuple of formulas indexed by a finite set $J$, 
  $Z \subset \R^k$ a semi-algebraic subset,
  we will denote by $\RR(\Phi,Z)$ the tuple $(\RR(\phi_j,Z))_{j \in J}$, and
  call it the realization of $\Phi$ in $Z$.
  For $J \subset J'$, we will denote by $\Phi|_{J'}$ the tuple $(\phi_j)_{j \in J'}$.
  
  We say that a quantifier-free formula $\phi$ is \emph{closed}  
  if it is a formula in disjunctive normal form with no negations, and with atoms of the form $P \geq 0, P \leq 0$ (resp. $P > 0, P < 0$),  
where $P \in \D[X_1,\ldots,X_k]$. If the set of polynomials appearing in a closed (resp. open) formula
is contained in a finite set $\mathcal{P}$, we will call such a formula a $\mathcal{P}$-closed 
formula,
 and we call
  the realization, $\RR \left(\phi \right)$, a \emph{$\mathcal{P}$-closed 
  semi-algebraic set}.
We say that a formula $\phi$ is a \emph{closed}-formula if $\phi$ is a $\mathcal{P}$-closed
formula for some finite set of polynomials $\mathcal{P}$.
  \end{notation}

We will also use the following notation.
  \begin{notation}
For $n \in \Z$ we denote by $[n] = \{0,\ldots,n\}$. In particular, $[-1] = \emptyset$. 
\end{notation}

 Finally, we are able to state the main results proved in this paper.
 \subsection{Efficient simplicial replacements of semi-algebraic sets}
  \begin{theorem}
  \label{thm:alg}
  There exists an algorithm that takes as input
  \begin{enumerate}
  \item
  a $\mathcal{P}$-closed 
  formula $\phi$ for some finite set 
  $\mathcal{P} \subset \D[X_1,\ldots,X_k]$;
  \item
  $\ell, 0 \leq \ell \leq k$;
  \end{enumerate}
  and produces as output a simplicial complex $\Delta_{\ell}(\phi)$ such that
  $|\Delta_{\ell}(\phi)| \overset{h}{\sim}_\ell \RR(\phi)$.
  The complexity of the algorithm  is bounded by $(s d)^{k^{O(\ell)}}$, where $s = \card(\mathcal{P})$ and
  $d = \max_{P \in \mathcal{P}} \deg(P)$.
  
  More generally, 
  there exists an algorithm that takes as input
  \begin{enumerate}
  \item
  a tuple $\Phi = (\phi_0,\ldots,\phi_{N})$ of $\mathcal{P}$-closed 
  formulas for some finite set 
  $\mathcal{P} \subset \D[X_1,\ldots,X_k]$;
  \item
  $\ell, 0 \leq \ell \leq k$;
  \end{enumerate}
  and produces as output a simplicial complex $\Delta_{\ell}(\Phi)$,
  and for each $J \subset [N]$ a subcomplex $\Delta_{\ell}(\Phi|_{J})$,
  such that 
  \[
  (J \mapsto |\Delta_{\ell}(\Phi|_{J})|)_{J \subset [N]}  \overset{h}{\sim}_\ell \Simp^{[N]}(\RR(\Phi)).
  \]
  The complexity of the algorithm is  bounded by $(N s d)^{k^{O(\ell)}}$, where $s = \card(\mathcal{P})$ and  $d = \max_{P \in \mathcal{P}} \deg(P)$.
  \end{theorem}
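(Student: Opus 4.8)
The plan is to reduce Theorem~\ref{thm:alg} to the topological construction of Theorem~\ref{thm:main} (the generalized nerve lemma) by supplying the ``black-box'' it requires with an explicit, singly-exponential-complexity algorithm. The key observation is that the classical critical-point-method machinery already provides, for any $\mathcal{P}$-closed semi-algebraic set $S \subset \R^k$, a cover of $S$ by closed semi-algebraic subsets each of which is $\ell$-connected (or at least homologically $\ell$-connected) and of controlled complexity: one intersects $S$ with small closed balls (or boxes) around a suitable finite set of sample points, or alternatively uses the covers by ``contractible'' pieces coming from a cylindrical-algebraic-decomposition-free construction as in \cite{Bas05-first, BPRbettione}. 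First I would make precise which black-box is used: given $\phi$, produce formulas $\phi_1,\ldots,\phi_m$ describing a closed cover $\{S_i\}$ of $\RR(\phi)$ with each $S_i$ homologically $\ell$-connected (and $\ell$-connected when $\R=\mathbb{R}$), with $m$ and the number/degrees of the $\phi_i$ bounded by $(sd)^{k^{O(1)}}$; this is essentially a citation to the existing singly-exponential algorithms, with the covering property being the content one must extract from them.

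Next I would feed this black-box into the construction of the poset $\mathbf{P}(S)$ from Definition~\ref{def:poset} and invoke Theorem~\ref{thm:main} (and its homological analogue Theorem~\ref{thm:main'}), which guarantees that the order complex of $\mathbf{P}(S)$ is $\ell$-equivalent (resp.\ homologically $\ell$-equivalent) to $S$. The recursive structure of $\mathbf{P}(S)$ means the black-box is applied not just to $S$ but to the various unions $\mathcal{A}^{J'}$ of at most, say, $\ell+2$ of the cover pieces, and then recursively to covers of those, to depth roughly $\ell$. Each such union is again a $\mathcal{P}'$-closed set for a polynomial set $\mathcal{P}'$ of size $(sd)^{k^{O(1)}}$, so after $O(\ell)$ levels of recursion the number and degrees of polynomials involved are bounded by $(sd)^{k^{O(\ell)}}$; running the black-box on each, plus bookkeeping to assemble the poset and then its order complex $\Delta_\ell(\phi)$, stays within the same bound. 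The complexity accounting is thus a geometric-series-in-the-exponent estimate: a tower of height $O(\ell)$ each step of which raises the exponent by a constant factor, giving $k^{O(\ell)}$ in the exponent.

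For the ``more general'' statement about a tuple $\Phi = (\phi_0,\ldots,\phi_N)$ and the diagram $\Simp^{[N]}(\RR(\Phi))$, I would carry out the same construction functorially over the indexing poset $2^{[N]}$: apply the covering black-box simultaneously to all the unions $\mathcal{A}^{J}$ for $J \subset [N]$, using one common refinement of the sign conditions so that the covers are compatible with the inclusion maps $\mathcal{A}^{J} \hookrightarrow \mathcal{A}^{J'}$, and then take the associated diagram of order complexes $J \mapsto |\Delta_\ell(\Phi|_J)|$. Theorem~\ref{thm:main} in its diagram form (applied to $\Simp^{[N]}$) yields the claimed homological $\ell$-equivalence of diagrams. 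The extra factor of $N$ in the complexity bound $(Nsd)^{k^{O(\ell)}}$ comes from there being $2^{N+1}$ subsets $J$ but the polynomials themselves being shared; since the bound is already singly exponential one absorbs the $2^{N+1}$ by noting $2^{N+1} \le (Nsd)^{O(1)}$ for $N$ not dwarfing $sd$, or simply by carrying $N$ into the base.

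The main obstacle I expect is not the complexity bookkeeping but ensuring the black-box genuinely delivers a cover with the stated connectivity. Standard algorithms produce covers by pieces that are \emph{semi-algebraically contractible} only under extra hypotheses (smoothness, bounded sets, use of infinitesimal deformations), so the real work is: (i) replacing an arbitrary $\mathcal{P}$-closed set by a closed and bounded one without changing its $\ell$-homotopy type (here the remark about Gabrielov--Vorobjov \cite{GV07} and the closedness hypothesis is relevant, and one likely introduces an infinitesimal $\eps$ and works over $\RR\la\eps\ra$), and (ii) checking that the specific cover used (balls around sample points, intersected with the set) is $\ell$-connected rather than merely nonempty --- which typically requires choosing the radii infinitesimally and appealing to local conic structure of semi-algebraic sets. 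Once the black-box is established with these properties and with the complexity bound, the rest of the proof is a mechanical combination of Theorem~\ref{thm:main}, the recursion-depth estimate, and the complexity of the underlying real-algebraic-geometry subroutines (sampling, quantifier elimination on a block of variables, etc.) as catalogued in \cite{BPRbook2}.
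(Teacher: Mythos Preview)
Your overall strategy matches the paper's: the proof of Theorem~\ref{thm:alg} is exactly ``run Algorithm~\ref{alg:simplicial-replacement} and invoke Theorem~\ref{thm:main} for correctness and Theorem~\ref{thm:main:complexity} for the complexity bound,'' where the black-box supplying the homologically $\ell$-connected covers is precisely the covering-by-contractible-sets algorithm (Algorithm~\ref{alg:contractible-cover}, i.e.\ Algorithm~16.14 of \cite{BPRbook2}). Your anticipation of the infinitesimal-thickening issue is also right on target: the paper handles it by replacing $\phi$ with $\phi^\star(\eps_0)$ and working over the Puiseux extension, exactly as you sketch in your final paragraph. The ``balls around sample points'' idea, by contrast, is not known to give $\ell$-connected pieces in singly exponential time, so you should commit to the contractible-cover algorithm as the black-box.

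There is one genuine gap in your treatment of the diagram (tuple) version. You propose to apply the black-box to every union $\RR(\Phi)^{J}$ for $J\subset [N]$ and then reconcile; this forces you to iterate over $2^{N+1}$ subsets, and your attempted absorption ``$2^{N+1}\le (Nsd)^{O(1)}$'' is simply false. The paper avoids this entirely: it applies the contractible-cover algorithm once to each $\RR(\phi_i)$, pools all the resulting cover elements into a single tuple $\Psi$ indexed by $J=\bigcup_i J_i$, and runs the poset construction \emph{once} on $\Psi$. The subcomplexes for the various $J'\subset[N]$ are then obtained as $\Delta(\mathbf{P}_{m,0}(\Psi|_{\bigcup_{i\in J'}J_i}))$, i.e.\ by restriction inside the single poset already built, and the diagrammatic homological $\ell$-equivalence is exactly the content of \eqref{eqn:thm:main:diagrammatic} in Theorem~\ref{thm:main}. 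This is why the complexity bound is $(Nsd)^{k^{O(\ell)}}$ with $N$ entering only as a multiplicative factor in the base, not as $2^N$.
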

 
 Theorem~\ref{thm:alg} is valid over arbitrary real closed fields. In the special case of $\R = \mathbb{R}$, we have the following stronger version of Theorem~\ref{thm:alg}, where we are able to replace
 homological $\ell$-equivalence by $\ell$-equivalence. 
 
  \begin{manualtheorem}{\ref*{thm:alg}$^\prime$}
  \label{thm:alg'}
  Let $\R =\mathbb{R}$.
  There exists an algorithm that takes as input
  \begin{enumerate}
  \item
  a $\mathcal{P}$-closed 
  formula $\phi$ for some finite set 
  $\mathcal{P} \subset \D[X_1,\ldots,X_k]$;
  \item
  $\ell, 0 \leq \ell \leq k$;
  \end{enumerate}
  and produces as output a simplicial complex $\Delta_{\ell}(\phi)$ such that
  $|\Delta_{\ell}(\phi)| \sim_\ell \RR(\phi)$.
  The complexity of the algorithm is  bounded by $(s d)^{k^{O(\ell)}}$, where $s = \card(\mathcal{P})$ and
  $d = \max_{P \in \mathcal{P}} \deg(P)$.
  
  More generally, 
  there exists an algorithm that takes as input
  \begin{enumerate}
  \item
  a tuple $\Phi = (\phi_0,\ldots,\phi_{N})$ of $\mathcal{P}$-closed 
  formulas for some finite set 
  $\mathcal{P} \subset \D[X_1,\ldots,X_k]$;
  \item
  $\ell, 0 \leq \ell \leq k$;
  \end{enumerate}
  and produces as output a simplicial complex $\Delta_{\ell}(\Phi)$, 
  and for each $J \subset [N]$ a subcomplex $\Delta_{\ell}(\Phi|_{J})$
  such that 
  \[
  (J \mapsto |\Delta_{\ell}(\Phi|_{J})|)_{J \subset [N]}  \sim_\ell \Simp^{[N]}(\RR(\Phi)).
  \]
  The complexity of the algorithm is  bounded by $ (N s d)^{k^{O(\ell)}}$, where $s = \card(\mathcal{P})$ and
  $d = \max_{P \in \mathcal{P}} \deg(P)$.
  \end{manualtheorem}

 \begin{remark}
 \label{rem:homology-vs-homotopy}
 One main tool that we use is the Vietoris-Begle theorem (see proofs of Claims \ref{thm:main:proof:claim:1}, \ref{thm:main:proof:claim:2}). Since, there are many versions of the Vietoris-Begle theorem in the literature we make precise what we use below.
 
 It follows from \cite[Main Theorem]{Smale} that if
 $X \subset \mathbb{R}^m, Y \subset \mathbb{R}^n$ are compact semi-algebraic subsets (and so are locally contractible), and 
 $f:X \rightarrow Y$ is a semi-algebraic continuous map such that for every $y \in Y$, $f^{-1}(y)$ is $\ell$-connected, then 
 $f$ is an $\ell$-equivalence. 
 We will refer to this version of the Vietoris-Begle theorem as the 
 \emph{homotopy version of the Vietoris-Begle theorem}.
 Since, $\ell$-equivalence implies homological $\ell$-equivalence
 (see for example \cite[pp. 124, \S4.1B]{Viro-Fuchs}), $f$ is a homological
 $\ell$-equivalence  as well.
 
 Alternatively, if we assume that $f^{-1}(y)$ is only homologically $\ell$-connected for each $y \in Y$, then we can conclude that
 $f$ is a homological $\ell$-equivalence (see for example, the statement
 of the  Vietoris-Begle theorem in \cite{Ferry2018}). This latter
 theorem is also valid 
 for semi-algebraic maps between closed and bounded semi-algebraic sets
 over arbitrary real closed fields, once we know it for maps between compact semi-algebraic subsets over $\mathbb{R}$. This follows from a standard argument using the  Tarski-Seidenberg transfer principle and the fact that homology groups of closed bounded semi-algebraic sets can be defined in terms of finite triangulations. We will refer to this version of the Vietoris-Begle theorem  as
 the \emph{homological version of the Vietoris-Begle theorem}.
 \end{remark}

 \subsection{Application to computing homotopy groups of semi-algebraic sets}
 \label{subsec:homotopy}
 One important new contribution of the current paper compared to previous  algorithms
for computing topological invariants of semi-algebraic sets \cite{BPRbettione, Bas05-first} is that
for any given semi-algebraic subset $S \subset \mathbb{R}^k$, our algorithms give information on not just the 
homology groups but the homotopy groups of $S$ as well.

Computing homotopy groups of semi-algebraic sets is a considerably harder problem than 
 computing homology groups. There is no algorithm to decide whether the 
 fundamental group of a finite simplicial complex is trivial \cite{Novikov}. 
 As such the problem of deciding whether the fundamental
 group of any semi-algebraic subset $S \subset \mathbb{R}^k$  
 is trivial or not is an undecidable problem. 
 
On the other hand algorithms for computing topological invariants of a given semi-algebraic set $S \subset \R^k$, defined by a $\mathcal{P}$-formula where $\mathcal{P} \subset \D[X_1,\ldots,X_k]$, usually involve two kinds of operations. 
 \begin{enumerate}[(a)]
 \item Arithmetic operations and comparisons amongst elements of the ring $\D$;
 \item Operations that do not involve elements of $\D$.
 \end{enumerate}
 In our complexity bounds we only count the first kind of operations (i.e. those which involve elements 
 of $\D$).
 
 From this point of view  it makes sense to ask for any algorithmic problem involving formulas defined 
 over $\D$, if there is a reduction to another problem whose input is independent of $\D$.
 Theorem~\ref{thm:alg'} gives precisely such a reduction for computing the first $\ell$ homotopy groups of any given semi-algebraic set defined by a formula involving coefficients from any fixed
 subring $\D \subset \mathbb{R}$. 
 
 \begin{corollary}
 \label{cor:alg':1}
 For every fixed $\ell$, and an ordered domain $\D \subset \mathbb{R}$, there exists a 
 a reduction of the problem of 
 computing the first $\ell$ homotopy groups of a semi-algebraic set defined by a 
 quantifier-free formula with coefficients in $\D$,  to that
 of the problem of computing the first $\ell$ homotopy groups of a finite simplicial complex.
 The complexity of this reduction is bounded singly exponentially in the size of the input.
 \end{corollary}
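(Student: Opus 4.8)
The plan is to combine the simplicial replacement algorithm of Theorem~\ref{thm:alg'} with the elementary fact that an $\ell$-equivalence induces isomorphisms on homotopy groups in degrees $\leq \ell$, together with a closing-up step to handle formulas that are not already closed.

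First I would reduce to the case of a closed and bounded semi-algebraic set. The input is an arbitrary quantifier-free formula $\phi$ with coefficients in $\D \subset \mathbb{R}$, so $\RR(\phi) \subset \mathbb{R}^k$ need not be closed or bounded. Invoking the results of Gabrielov and Vorobjov \cite{GV07} on approximating an arbitrary semi-algebraic set, up to homotopy, by a closed and bounded one (cf. Remark~\ref{rem:GV}), one computes a $\mathcal{P}'$-closed formula $\phi'$ (possibly involving finitely many infinitesimal parameters, which over $\mathbb{R}$ may be replaced by sufficiently small positive real constants) such that $\RR(\phi')$ is closed, bounded, and homotopy equivalent to $\RR(\phi)$; in particular $\pi_i(\RR(\phi')) \cong \pi_i(\RR(\phi))$ for all $i \geq 0$. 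The ambient dimension, the cardinality of $\mathcal{P}'$, and the degrees of its elements remain singly exponentially bounded in the size of the input, and this step itself has singly exponential complexity.

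Next I would apply Theorem~\ref{thm:alg'} to $\phi'$ and the fixed integer $\ell$, obtaining a finite simplicial complex $\Delta_\ell(\phi')$ with $|\Delta_\ell(\phi')| \sim_\ell \RR(\phi')$; both the running time of this step and the number of simplices of $\Delta_\ell(\phi')$ are bounded singly exponentially in the size of the original input, $\ell$ being fixed. Finally I would unwind Definition~\ref{def:ell-equivalent}: the relation $|\Delta_\ell(\phi')| \sim_\ell \RR(\phi')$ furnishes a zig-zag of $\ell$-equivalences between the two spaces, and since each $\ell$-equivalence induces isomorphisms $\pi_i(\cdot) \to \pi_i(\cdot)$ for $0 \leq i \leq \ell$ and isomorphism of groups is transitive, we obtain
\[
\pi_i(|\Delta_\ell(\phi')|) \cong \pi_i(\RR(\phi')) \cong \pi_i(\RR(\phi)), \qquad 0 \leq i \leq \ell.
\]
Hence the procedure that sends $\phi$ to the explicitly computed finite simplicial complex $\Delta_\ell(\phi')$ is the desired reduction of the problem of computing $\pi_0(\RR(\phi)),\ldots,\pi_\ell(\RR(\phi))$ to the purely combinatorial problem of computing the first $\ell$ homotopy groups of a finite simplicial complex, and its cost — the sum of the costs of the two algorithmic steps above — is singly exponential in the size of the input. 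The one genuinely delicate ingredient is the first step: one must check that the Gabrielov–Vorobjov closure can be performed algorithmically within singly exponential complexity, and that passing to (or specializing) the infinitesimal parameters does not alter the homotopy type. This is precisely what Remark~\ref{rem:GV} asserts; granting it, everything that follows is routine bookkeeping, and the reduction is complete. (Note that the target problem is itself undecidable in general, by the undecidability of the word problem \cite{Novikov}; the content here is the reduction, which yields an honest algorithm whenever the simplicial complex produced is, say, simply connected.)
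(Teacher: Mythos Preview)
Your proposal is correct and follows essentially the same route as the paper: the paper does not give a separate proof of Corollary~\ref{cor:alg':1} but presents it as an immediate consequence of Theorem~\ref{thm:alg'} (see the paragraph preceding the corollary), which is exactly what you do in your second and third steps. Your first step, the Gabrielov--Vorobjov reduction to a closed and bounded set, goes slightly beyond what the paper carries out explicitly --- the paper acknowledges in Remark~\ref{rem:GV} that this step is needed to handle arbitrary quantifier-free formulas but deliberately omits it --- so your write-up is, if anything, a bit more complete than the paper's own treatment.
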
 
  
 While the problem of computing the fundamental group as well as the higher homotopy groups
 of a finite simplicial complex is clearly an extremely challenging problem, there has been recent
 breakthroughs. If a simplicial complex $K$  is $1$-connected then \v{C}adek et al. \cite{Cadek} has given
 an algorithm for computing a description of the homotopy groups
 $\pi_i(|K|)$, $2 \leq i \leq \ell$, which has complexity polynomially bounded in the size of the simplicial
 complex $K$ for every fixed $\ell$. This result coupled with Theorem~\ref{thm:alg'} gives the
 following corollary.
 
 \begin{corollary}
 \label{cor:alg':2}
 Let $\R =\mathbb{R}, \D \subset \R$ and $\ell \geq 2$.
  There exists an algorithm that takes as input
  \begin{enumerate}
  \item
  a $\mathcal{P}$-closed 
  formula $\phi$ for some finite set 
  $\mathcal{P} \subset \D[X_1,\ldots,X_k]$;
  \item
  $\ell, 0 \leq \ell \leq k$;
  \end{enumerate}
  such that $\RR(\phi)$ is simply connected,
  and outputs descriptions of the abelian groups $\pi_i(\RR(\phi))$, $2\leq i \leq \ell$ in terms
  of generators and relations.
 
  The complexity of the algorithm is  bounded by $(s d)^{k^{O(\ell)}}$, where $s = \card(\mathcal{P})$ and
  $d = \max_{P \in \mathcal{P}} \deg(P)$.
  \end{corollary}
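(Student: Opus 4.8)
The plan is to obtain the algorithm by simply composing the algorithm of Theorem~\ref{thm:alg'} with the algorithm of \v{C}adek et al.~\cite{Cadek}. On input $\phi$ and $\ell$ --- with $\RR(\phi)$ guaranteed to be simply connected --- I would first run the algorithm of Theorem~\ref{thm:alg'} to produce a finite simplicial complex $K = \Delta_\ell(\phi)$ with $|K| \sim_\ell \RR(\phi)$. By that theorem this step costs $(sd)^{k^{O(\ell)}}$ arithmetic operations in $\D$, and since $K$ is part of the output we also have $\size(K) \leq (sd)^{k^{O(\ell)}}$.

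Next I would transport the connectivity hypothesis from $\RR(\phi)$ to $|K|$. Since $\sim_\ell$ is by definition realized by a finite zig-zag of $\ell$-equivalences (Definition~\ref{def:ell-equivalent}), and each $\ell$-equivalence induces isomorphisms $\pi_i(\cdot) \to \pi_i(\cdot)$ for $0 \leq i \leq \ell$, composing along the zig-zag yields $\pi_i(|K|) \cong \pi_i(\RR(\phi))$ for all $0 \leq i \leq \ell$. In particular, because $\ell \geq 2$ and $\RR(\phi)$ is simply connected, $|K|$ is connected with $\pi_1(|K|) = 0$, i.e.\ $K$ is $1$-connected, and $\pi_i(|K|) \cong \pi_i(\RR(\phi))$ for $2 \leq i \leq \ell$.

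Finally I would apply the algorithm of \v{C}adek et al.~\cite{Cadek} to the $1$-connected complex $K$; it outputs presentations (generators and relations) of the abelian groups $\pi_i(|K|)$ for $2 \leq i \leq \ell$ in time polynomial in $\size(K)$ for fixed $\ell$. By the previous step these are exactly presentations of $\pi_i(\RR(\phi))$, $2 \leq i \leq \ell$. The total cost is $(sd)^{k^{O(\ell)}}$ for the first stage plus $\big((sd)^{k^{O(\ell)}}\big)^{O(1)}$ for the second, and since the second stage performs no arithmetic in $\D$ (it operates on the purely combinatorial object $K$; cf.\ Remark~\ref{rem:complexity}), the overall complexity, measured as in Definition~\ref{def:complexity}, stays $(sd)^{k^{O(\ell)}}$.

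The only point I expect to need care --- and it is not really an obstacle --- is that the \v{C}adek et al.\ algorithm takes $1$-connectedness of its input as a \emph{precondition}, whereas deciding $1$-connectedness of a simplicial complex (or of $\RR(\phi)$) is undecidable. This is precisely why the hypothesis that $\RR(\phi)$ be simply connected is indispensable: it is what makes the invocation of \cite{Cadek} legitimate once $1$-connectedness has been propagated to $K$ through the $\ell$-equivalence. All the substantive work thus sits in Theorem~\ref{thm:alg'}.
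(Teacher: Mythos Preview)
Your proposal is correct and follows exactly the approach the paper intends: the corollary is stated immediately after the paper recalls the \v{C}adek et al.\ result and explicitly says ``This result coupled with Theorem~\ref{thm:alg'} gives the following corollary,'' so the proof is precisely the composition you describe. Your treatment of transporting $1$-connectedness along the zig-zag of $\ell$-equivalences and your complexity accounting (in particular the observation that the second stage involves no arithmetic in $\D$, cf.\ Remark~\ref{rem:complexity}) are the only details that need spelling out, and you have them right.
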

  
  \begin{remark}
 Note that we do not have an effective algorithm for checking the hypothesis that the given semi-algebraic set is simply connected.
  \end{remark}

\subsection{Comparison with prior and related results}
\label{subsec:prior}
As stated previously, there is no algorithm known for computing the Betti numbers of semi-algebraic sets
having singly exponential complexity. However, algorithms with singly exponential complexity are known for
computing certain (small) Betti numbers. The zero-th Betti number of a semi-algebraic set is just the number of its semi-algebraically connected components. 
Counting the number of  semi-algebraically connected components  of a given semi-algebraic set
is a well-studied problem and algorithms with singly exponential complexity are known for solving this problem \cite{BPR99, GV92, Canny93a}. 
In \cite{BPRbettione} a singly exponential complexity algorithm is given for computing the first Betti number of semi-algebraic sets, and this was extended to the first $\ell$ (for any fixed constant $\ell$) Betti numbers in \cite{Bas05-first}. These algorithms do not 
produce a simplicial complex homotopy equivalent (or $\ell$-equivalent) to the given semi-algebraic set.

In \cite{BCL2019, BCT2020.1, BCT2020.2}, the authors take a different approach. Working over $\mathbb{R}$, and given a 
well-conditioned semi-algebraic subset $S\subset \mathbb{R}^k$,  
they compute a witness complex whose geometric realization is $k$-equivalent to $S$. The size of this witness
complex is bounded singly exponentially in $k$. However, the complexity depends on the condition number of the input 
 (and so this bound is not uniform), and the algorithm will fail for ill-conditioned input when the condition number becomes
 infinite. This is unlike the kind of algorithms we consider in the current paper, which are supposed to work for all inputs
 and with uniform complexity upper bounds. So these approaches are not comparable.
 
 While the approaches in \cite{BPRbettione, Bas05-first} and those in \cite{BCL2019, BCT2020.1, BCT2020.2} are not comparable, since the meaning of what constitutes
 an algorithm and the notion of complexity are different, there is a common connection between the results of these papers and those in the current paper which we elucidate below.
 
\subsubsection{Covers}
\label{subsubsec:covers}
A standard method in algebraic topology  for computing homology/cohomology 
 of a  space  $X$ is by means of an appropriately chosen \emph{cover}, $(V_\alpha \subset X)_{\alpha \in I}$, of  $X$ by
open or closed subsets.  
Suppose that  $X \subset \mathbb{R}^k$ is a closed or open semi-algebraic set. 
 Let $\mathcal{V} = (V_\alpha \subset X)_{\alpha \in I}$ be a finite cover of $X$ by open or closed semi-algebraic subsets, such that for each non-empty 
subset $J \subset I$, the intersection $V_J = \bigcap_{\alpha \in J} V_\alpha$ is either empty or contractible. We will say that such covers have the \emph{Leray property} and refer to them as \emph{Leray covers}.
 One can then associate to the cover $\mathcal{V}$, a simplicial complex, $\Nerve(\mathcal{V})$, the nerve of $\mathcal{V}$ defined as follows.
 
The set of $p$-simplices of $\Nerve(\mathcal{V})$ is defined by
 \[
 \Nerve(\mathcal{V})_p = \{ \{\alpha_0,\ldots,\alpha_p\} \subset 2^I \mid
V_{\alpha_0} \cap \cdots \cap V_{\alpha_p} \neq \emptyset\}.
\] 
It follows from a classical result of algebraic topology that the geometric realization $|\Nerve(\mathcal{V})|$
is homotopy equivalent to $X$, and moreover for each $\ell \geq 0$, the  geometric realization of the
$(\ell+1)$-st skeleton of $\Nerve(\mathcal{V})$,
\[
\mathrm{sk}_{\ell+1}(\Nerve(\mathcal{V})) = \{ \sigma \in \Nerve(\mathcal{V}) \mid
\card(\sigma) \leq \ell+2\}.
\]
 is homologically $\ell$-equivalent (resp. $\ell$-equivalent) to $X$ (resp. when $\R = \mathbb{R}$). 

The algorithms for computing the Betti numbers in \cite{BCL2019, BCT2020.1, BCT2020.2} proceeds by computing the $k$-skeleton
of the nerve of a cover having the Leray property whose size is bounded singly exponentially in $k$, and computing the simplicial homology groups of this complex. 
However, the bound on the size of the cover is not uniform but depends on
a real valued parameter -- namely the condition number of the input -- and hence the size of the cover can become infinite.
In fact, computing a singly exponential sized cover by semi-algebraic subsets having the Leray property
of an arbitrary semi-algebraic sets is an open
problem. If one solves this problem then one would also solve immediately the problem of designing an algorithm for computing all the Betti numbers of arbitrary semi-algebraic sets with singly exponential complexity in full generality.

The algorithms in \cite{BPRbettione,Bas05-first} which are able to compute some of the Betti numbers in dimensions $>0$  also depends on the existence of small covers having size bounded singly exponentially, albeit satisfying a much weaker property than the Leray property. The weaker property is that only the sets $V_\alpha, \alpha \in I$ (i.e. the elements of the cover) are contractible. No assumption is
made on the non-trivial finite intersections amongst the sets of the cover. Covers satisfying this weaker property can indeed
be computed with singly exponential complexity (this is one of the main results of \cite{BPRbettione} but see Remark \ref{rem:cover}), and using this fact one
is able to compute the first $\ell$ Betti numbers of semi-algebraic subsets of $\R^k$ for every fixed $\ell$ with singly exponential complexity. The 
algorithms in \cite{BPRbettione} and \cite{Bas05-first} do not construct a simplicial complex homotopy equivalent or $\ell$-equivalent to the given semi-algebraic set $S$ unlike  
the algorithm in \cite{BCL2019}.
\subsubsection{Main technical contribution}
\label{subsubsec:technical}
The main technical result that underlies the algorithmic result 
of the current paper is the following. Fix
$0 \leq \ell \leq k$. Suppose for every closed and bounded semi-algebraic set $S$ one has a covering of $S$ by 
closed and bounded semi-algebraic subsets which are $\ell$-connected (see Definition~\ref{def:ell-connected}) and which
has singly exponentially bounded complexity (meaning that the number of sets in the cover, the number of polynomials used in the quantifier-free formulas defining these sets and their degrees are all bounded singly exponentially in $k$). Moreover, since it is clear that contractible covers with singly exponential complexity exists, this is not a vacuous assumption. Using $\ell$-connected covers repeatedly  we build a simplicial 
complex of size bounded singly exponentially which is $\ell$-equivalent to the given semi-algebraic set. The definition
of this simplicial complex is a bit involved (much more involved than the nerve complex of a Leray cover) and appears in Section \ref{sec:simplicial-replacement}.  Its main properties are encapsulated in Theorem~\ref{thm:main}.

Two remarks are in order.

\begin{remark}
\begin{enumerate}[1.]
\item
Firstly, the Leray property can be weakened to require that  for every $t$-wise intersection,
$V_J, \card(J) = t$ is either empty or $(\ell-t+1)$-connected \cite{Bjorner}. 
We call this the \emph{$\ell$-Leray property}. The nerve complex,
$\Nerve(\mathcal{V})$ is then $\ell$-equivalent to $X$ \cite{Bjorner}. However, the property that we
use is much weaker -- namely that only the elements of the cover are $\ell$-connected and we 
make no assumptions on the connectivity of the intersections of two or more sets of the cover. 
This is due to the fact that controlling the connectivity of the intersections is very difficult and we do not
know of any algorithm with singly exponential complexity
for computing covers having the $\ell$-Leray property for $\ell \geq 1$.

\item
Secondly, note that 
to be $\ell$-connected
is a weaker property than being contractible. Unfortunately, at present we do not know of algorithms for computing $\ell$-connected covers, for
$\ell > 0$ that has much better complexity asymptotically than the algorithm in \cite{BPRbettione} for computing
covers by contractible semi-algebraic sets. However, it is still possible that there could be algorithms with much better complexity 
for computing $\ell$-connected covers (at least for small $\ell$) compared to computing contractible covers. 
\end{enumerate}
\end{remark}

\section{Simplicial replacement in an abstract setting}
\label{sec:simplicial-replacement}
We now arrive at the technical core of the paper.
Given a finite set $J$,  a tuple, 
$\Phi = (\phi_j)_{j \in J}$,  of 
closed formulas with $k$ free variables, and numbers $i,m \geq 0$,
we will describe the construction of a poset,
that we denote by $\mathbf{P}_{m,i}(\Phi)$.
We will assume that the  realizations, $\RR(\phi_j), j \in J$, of the  formulas in the tuple are homologically $\ell$-connected semi-algebraic subsets of $\R^k$ for some $\ell \geq 0$.  
In case $\R = \mathbb{R}$,  substitute 
``$\ell$-connected''  for ``homologically $\ell$-connected''. 
The poset $\mathbf{P}_{m,i}(\Phi)$ will have the property that
the geometric realization of its order complex, $\Delta(\mathbf{P}_{m,i}(\Phi))$,  is 
homologically $(m-1)$-equivalent ($(m-1)$-equivalent if $\R = \mathbb{R}$) to 
$\RR(\Phi)^J$. More generally, for each $J' \subset J$, $\mathbf{P}_{m,i}(\Phi|_{J'})$ can be identified
as a subposet of $\mathbf{P}_{m,i}(\Phi)$, and the diagram of inclusions of the corresponding 
geometric realizations is  homologically $(m-1)$-equivalent to the diagram $\Simp^J(\RR(\Phi))$ 
($(m-1)$-equivalent if $\R = \mathbb{R}$) (cf. Theorems~\ref{thm:main} and \ref{thm:main'}).
The poset $\mathbf{P}_{m,i}(\Phi)$ will then encode in a finite combinatorial way information which
determines the first $m$ homotopy groups of $\RR(\Phi)^{J'}$ for all $J' \subset J$, and the morphisms
$\pi_h(\RR(\Phi)^{J'}) \rightarrow \pi_h(\RR(\Phi)^{J''})$ induced by inclusions, 
for $0 \leq h \leq m-1$ and $J' \subset J'' \subset J$.  
(The significance of the subscript $i$ in the notation $\mathbf{P}_{m,i}(\Phi)$ will become clear later.)

\subsection{Outline of the main idea}
We begin with an outline explaining the main ideas behind the construction. First observe that if the realizations
of the sets in the given tuple, in addition to being $\ell$-connected, satisfies the $\ell$-Leray property (i.e.
each $t$-wise intersections amongst them is  $(\ell -t +1)$-connected), then it follows from
\cite{Bjorner} that the poset of the non-empty intersections (with the poset relation being canonical inclusions) satisfies the property that the geometric realization of its order complex (see Definition~\ref{def:order-complex}) 
is $\ell$-equivalent to $\RR(\Phi)^J$. The  same is true for all the subposets obtained by restricting the intersections to only amongst those indexed by some subset $J' \subset J$.
However if the $\ell$-Leray property fails to hold then the poset of canonical inclusions may fail to have the desired property.

Consider for example, the tuple 
\[
\Phi=(\phi_0,\phi_1),
\]
where 
\begin{eqnarray*}
\phi_0 &:=& (X_1^2 + X_2^2 -1 = 0) \wedge (X_2 \geq 0), \\
\phi_1 &:=& (X_1^2 + X_2^2 -1 = 0) \wedge (X_2 \leq 0).
\end{eqnarray*}
The realizations $\RR(\phi_0), \RR(\phi_1)$ are the upper and lower semi-circles covering the unit circle in the
plane.

The intersection $\RR(\phi_0) \cap \RR(\phi_1) = \RR(\phi_0 \wedge \phi_1)$ is the disjoint union of two points. The Hasse diagram of the  poset of canonical inclusions of the sets defined by $\phi_0$, $\phi_1$, and $\phi_0 \wedge \phi_1$ is:

\[
\xymatrix{
\phi_0  && \phi_1 \\
&\phi_0 \wedge \phi_1 \ar[lu] \ar[ru] &
}
\]
and the order complex of the poset is the simplicial complex shown in Figure~\ref{fig:circle1}.
The geometric realization  of the order complex is clearly not homotopy equivalent to the 
\[
\RR(\Phi)^{\{0,1\}} = \RR(\phi_0) \cup \RR(\phi_1)
\] 
which is equal to the unit circle. This is not surprising since the cover of the circle by the two closed semi-circle is not a Leray cover (and in fact not $\ell$-Leray for any $\ell \geq 0$).

\begin{figure}[h!]
	\begin{center}
		\includegraphics[scale=0.75]{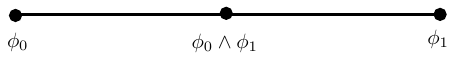}%
	\caption{\small Order complex for non-Leray cover}
	\label{fig:circle1}
	\end{center}
\end{figure}

One way of repairing this situation is to go one step further and choose a good (in this case $\infty$-connected) cover for the
intersection  $\RR(\phi_0) \cap \RR(\phi_1)$ defined by $\psi_0,\psi_1$, where
\begin{eqnarray*}
\psi_0 &:=& (X_1+1 = 0) \wedge (X_2 = 0), \\
\psi_1 &:=& (X_1-1=0) \wedge (X_2 = 0 ).
\end{eqnarray*}

The Hasse diagram of the  poset of canonical inclusions of the sets defined by $\phi_0$, $\phi_1$, $\psi_0$, and
$\psi_1$  

\[
\xymatrix{
\phi_0  & \phi_1 \\
\psi_0 \ar[u]\ar[ru] & \psi_1\ar[u] \ar[lu],
}
\]
and the order complex of the poset is shown in Figure~\ref{fig:circle2}.
It is easily seen to have the same homotopy type (homeomorphism type even in this case)
to the circle.
\begin{figure}[h]
	\centering
		\includegraphics[scale=0.75]{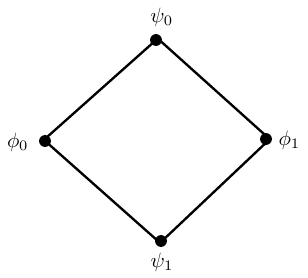}%
		\caption{\small Order complex for modified poset}
	\label{fig:circle2}
\end{figure}

The very simple example given above motivates the definition of the poset $\mathbf{P}_{m,i}(\Phi)$ in
general.  
We assume that we have available not just the given tuple of sets, and the non-empty
intersections amongst them, but also that we can cover any given non-empty intersections that arise in our
construction using $\ell$-connected closed (resp. open) semi-algebraic sets (we do not assume that these covers satisfy the stronger $\ell$-Leray property). The poset we define depends on the choice of 
these covers and not just on the formulas in the tuple $\Phi$ (unlike the standard nerve complex 
of the tuple $\RR(\Phi)$).
The choices that we make are encapsulated in the functions
$\mathcal{I}_{k,i}$ and $\mathcal{C}_{k,i}$ below. In practice, they would correspond to some
effective algorithm for computing well-connected covers of semi-algebraic sets.

\begin{remark}
\label{rem:cover}
There is one technical detail that serves to obscure a little the clarity of the construction. 
It arises due to the fact that the only algorithm with single exponential complexity  that exists in the literature
for computing well connected ($\infty$-connected or equivalently contractible) covers is the one
in \cite{BPRbettione}.
However, the algorithm
requires that the polynomials describing the given set
$S$ be in strong general position (see Definition~\ref{def:general-position}). In order to satisfy this requirement one needs to
initially perturb the given polynomials and replace the given set by another one, say $S'$, which is infinitesimally larger but has the same homotopy type as the given set $S$ (see Lemma~\ref{lem:monotone}). The algorithm then computes 
closed formulas whose realizations cover $S'$ and moreover are each semi-algebraically contractible. While there is a semi-algebraic retraction from $S'$ to $S$, this retraction is not guaranteed to restrict to
the elements of the cover. 
Our poset construction  is designed to be compatible with the fact that the covers we assume to exist
actually are covers of  infinitesimally larger sets   (i.e. that of $S'$ instead of $S$ following the notation of the previous sentence).
This 
necessitates the use of iterated Puiseux extensions in what follows.
\end{remark}

Of course, the introduction of infinitesimals  could be avoided by choosing sufficiently small 
positive elements in the field $\R$ itself and thus avoid making extensions. This would be more difficult to
visualize, and so we prefer to use the language of infinitesimal extensions. In the special case when
$\R = \mathbb{R}$, we prefer not to make non-archimedean extensions, since we discuss homotopy groups, so we take the alternative approach. However, we believe that the infinitesimal language is conceptually easier to grasp and so we use it in the general case.

Before giving the definition of the poset we first need to introduce
some mathematical preliminaries and notation.

\subsection{Real closed extensions and Puiseux series}
We will need some
properties of Puiseux series with coefficients in a real closed field. We
refer the reader to \cite{BPRbook2} for further details.

\begin{notation}
  For $\R$ a real closed field we denote by $\R \left\langle \eps
  \right\rangle$ the real closed field of algebraic Puiseux series in $\eps$
  with coefficients in $\R$. We use the notation $\R \left\langle \eps_{1},
  \ldots, \eps_{m} \right\rangle$ to denote the real closed field $\R
  \left\langle \eps_{1} \right\rangle \left\langle \eps_{2} \right\rangle
  \cdots \left\langle \eps_{m} \right\rangle$. Note that in the unique
  ordering of the field $\R \left\langle \eps_{1}, \ldots, \eps_{m}
  \right\rangle$, $0< \eps_{m} \ll \eps_{m-1} \ll \cdots \ll \eps_{1} \ll 1$.
  
 If $\bar\eps$ denotes the (possibly infinite) sequence $(\eps_1,\eps_2,\ldots)$ we will denote by
 $\R\la\bar\eps\ra$ the real closed field $\bigcup_{m\geq 0} \R\la\eps_1,\ldots,\eps_m\ra$.
 
Finally, given a finite sequence $(\bar\eps_1,\ldots,\bar\eps_m)$ we will denote by
$\R\la\bar\eps_1,\ldots,\bar\eps_m\ra$ the real closed field
$\R \left\langle \bar\eps_{1} \right\rangle \left\langle \bar\eps_{2} \right\rangle
  \cdots \left\langle \bar\eps_{m} \right\rangle$.
\end{notation}

\begin{notation}
\label{not:lim}
  For elements $x \in \R \left\langle \eps \right\rangle$ which are bounded
  over $\R$ we denote by $\lim_{\eps}  x$ to be the image in $\R$ under the
  usual map that sets $\eps$ to $0$ in the Puiseux series $x$.
\end{notation}

\begin{notation}
\label{not:extension}
  If $\R'$ is a real closed extension of a real closed field $\R$, and $S
  \subset \R^{k}$ is a semi-algebraic set defined by a first-order formula
  with coefficients in $\R$, then we will denote by $\E(S, \R') \subset \R'^{k}$ the semi-algebraic subset of $\R'^{k}$ defined by
  the same formula.
 \footnote{Not to be confused with the homological functor $\mathrm{Ext}(\cdot,\cdot)$ which
 unfortunately also appears in this paper.}
 It is well known that $\E(S, \R')$ does
  not depend on the choice of the formula defining $S$ 
  \cite[Proposition 2.87]{BPRbook2}.
\end{notation}

\begin{notation}
\label{not:monotone}
Suppose $\R$ is a real closed field,
and let $X \subset \R^k$ be a closed and bounded  semi-algebraic subset, and $X^+ \subset \R\la\eps\ra^k$
be a semi-algebraic subset bounded over $\R$. 
Let for $t \in \R, t >0$, $\widetilde{X}^+_{t} \subset \R^k$ denote the semi-algebraic
subset obtained by replacing $\eps$ in the formula defining $X^+$ by $t$, and it is
clear that  for $0 < t \ll 1$, $\widetilde{X}^+_t$ does not depend on the formula chosen. We say that $X^+$ is \emph{monotonically decreasing to $X$}, and denote $X^+ \searrow X$ if the following conditions are satisfied.
\begin{enumerate}[(a)]
\item
for all $0 < t < t'  \ll 1$,  $\widetilde{X}^+_{t} \subset  \widetilde{X}^+_{t'}$;
\item
\[
\bigcap_{t > 0} \widetilde{X}^+_{t} = X;
\]
or equivalently $\lim_\eps X^+ =  X$.
\end{enumerate}
More generally,
if $X \subset \R^k$ be a closed and bounded  semi-algebraic subset, and $X^+ \subset \R\la\eps_1,\ldots,\eps_m\ra^k$
a semi-algebraic subset bounded over $\R$,
we will say $X^+ \searrow X$ if and only if 
\[
X^+_{m+1} = X^+ \searrow X^+_m,  \;X^+_m \searrow X^+_{m-1}, \ldots, X^+_{2} \searrow X^+_1 = X,
\]
where for $i=1,\ldots, m$, $X^+_i = \lim_{\eps_i} X^+_{i+1}$. 

Note that if $\bar\eps= (\eps_1,\eps_2,\ldots)$ is an infinite sequence,  and $X^+ \subset \R\la\bar\eps\ra^k$ is a semi-algebraic subset bounded over $\R$,   
then there exists $m \geq 1$,  and semi-algebraic subset
$X^+_m \subset \R\la\eps_1,\ldots,\eps_m\ra^k$ closed and bounded over $\R$, 
such that $X^+ = \E(X^+_m,\R\la\bar\eps\ra)$.

In this case,  if $X \subset \R^k$ be a closed and bounded  semi-algebraic subset, we will say
$X^+ \searrow X$ if and only if 
\[
X^+_{m+1} = X^+ \searrow X^+_m,  \;X^+_m \searrow X^+_{m-1}, \ldots, X^+_{2} \searrow X^+_1 = X,
\]
where for $i=1,\ldots, m$, $X^+_i = \lim_{\eps_i} X^+_{i+1}$. 

Finally,
if $\bar\eps_1,\ldots,\bar\eps_m$ are sequences (possibly infinite), 
$X \subset \R^k$ be a closed and bounded  semi-algebraic subset, and $X^+ \subset \R\la\bar\eps_1,\ldots,\bar\eps_m\ra^k$
a semi-algebraic subset bounded over $\R$,
we will say $X^+ \searrow X$ if and only if 
\[
X^+_{m+1} = X^+ \searrow X^+_m,  \;X^+_m \searrow X^+_{m-1}, \ldots, X^+_{2} \searrow X^+_1 = X,
\]
where for $i=1,\ldots, m$, $X^+_i = \lim_{\bar\eps_i} X^+_{i+1}$. 
\end{notation}

The following lemma will be useful later.
\begin{lemma}
\label{lem:monotone}
Let $X \subset \R^k$ be a closed and bounded  semi-algebraic subset, and $X^+ \subset \R\la\bar\eps_1,\ldots,\bar\eps_m\ra^k$
a semi-algebraic subset bounded over $\R$, such that
$X^+ \searrow X$.
Then,
$\E(X,\R\la\bar\eps_1,\ldots,\bar\eps_m\ra)$ is semi-algebraic deformation retract of $X^+$.
\end{lemma}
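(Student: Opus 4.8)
The statement asserts that if $X^+ \searrow X$ (with $X$ closed and bounded over $\R$ and $X^+$ bounded over $\R\la\bar\eps_1,\ldots,\bar\eps_m\ra$), then $\E(X,\R\la\bar\eps_1,\ldots,\bar\eps_m\ra)$ is a semi-algebraic deformation retract of $X^+$. The plan is to reduce to the case of a single infinitesimal $\eps$, prove that case using a ``parametrized retraction'' argument combined with the Hardt triviality theorem, and then iterate.

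\emph{Step 1: Reduction to one infinitesimal.} By definition of $X^+ \searrow X$, there is a chain
\[
X^+_{m+1}=X^+ \searrow X^+_m \searrow \cdots \searrow X^+_2 \searrow X^+_1 = X,
\]
where each $X^+_{i+1}$ lives over one more layer of infinitesimals than $X^+_i$ and $X^+_i = \lim_{\bar\eps_i} X^+_{i+1}$. If I can show that $\E(X^+_i, \R\la\bar\eps_1,\ldots,\bar\eps_i\ra)$ is a semi-algebraic deformation retract of $X^+_{i+1}$ for each $i$, then composing these retractions (after base-extending each to the top field $\R\la\bar\eps_1,\ldots,\bar\eps_m\ra$, which preserves the deformation-retract property since it is expressible by a first-order formula) yields the result. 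Moreover each single layer $\bar\eps_i$ is itself a finite or infinite tower $\eps_{i,1} \gg \eps_{i,2} \gg \cdots$, and since $X^+_{i+1}$ is actually defined over finitely many of them, a further finite iteration reduces everything to the following atomic case: $Y$ closed and bounded over a real closed field $\R'$, $Y^+ \subset \R'\la\eps\ra^k$ bounded over $\R'$ with $Y^+ \searrow Y$, and I must show $\E(Y,\R'\la\eps\ra)$ is a semi-algebraic deformation retract of $Y^+$.

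\emph{Step 2: The atomic case.} Work with the family $(\widetilde{Y}^+_t)_{0<t\ll 1}$ over $\R'$. By condition (a) of $Y^+\searrow Y$ the sets $\widetilde{Y}^+_t$ are decreasing in $t$, and by (b) their intersection over all $t>0$ is $Y$. Consider the semi-algebraic set $Z = \{(t,x) : 0 < t < t_0,\ x \in \widetilde{Y}^+_t\} \subset \R' \times \R'^k$ for suitable small $t_0$, together with its projection $\pi$ to the $t$-axis. By Hardt's semi-algebraic triviality theorem there is $t_1>0$ such that $Z$ restricted to $(0,t_1)$ is semi-algebraically trivial over $(0,t_1)$: there is a semi-algebraic homeomorphism $(0,t_1)\times \widetilde{Y}^+_{t_1/2} \to Z|_{(0,t_1)}$ commuting with $\pi$. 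Using this trivialization together with the nesting, one builds a semi-algebraic deformation retraction of $\widetilde{Y}^+_{t}$ onto $Y = \bigcap_{s>0}\widetilde{Y}^+_s$ for $0<t\ll 1$: intuitively, flow along the trivialization towards $t=0$ and take the limit, which lands in $Y$ because of condition (b). One must check the retraction and homotopy are semi-algebraic uniformly in $t$ (Hardt's theorem gives this over the base $(0,t_1)$), so that the construction is itself given by a semi-algebraic formula with $\eps$ as a parameter. Transferring this $\R'$-family back by substituting $t \leftsquigarrow \eps$ (i.e., invoking the Tarski–Seidenberg transfer / the fact that a first-order statement true for all small $t$ in $\R'$ holds for $\eps$ in $\R'\la\eps\ra$) yields the desired semi-algebraic deformation retraction of $Y^+$ onto $\E(Y,\R'\la\eps\ra)$.

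\emph{Main obstacle.} The crux is Step 2: producing the retraction as an honest semi-algebraic map and, more delicately, ensuring that the deformation retraction \emph{lands in $Y$ in the limit} and is continuous up to and including $t\to 0$. Hardt triviality gives a trivialization over an open interval $(0,t_1)$ but does not by itself control the boundary behaviour at $0$; closing this gap requires using condition (b) (the intersection is exactly $Y$) to identify the limit fibre of the trivialization with $Y$, and a uniform-continuity / curve-selection argument to see that the retraction homotopy extends continuously to the closed parameter range. This is the standard mechanism behind results such as ``a set is a deformation retract of a small neighbourhood of it'' in the o-minimal setting (cf. \cite[Chapter 5]{BPRbook2}), and I expect the proof to follow that template, with the bookkeeping of the iterated Puiseux tower in Step 1 being routine once the atomic case is in hand.
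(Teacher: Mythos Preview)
The paper does not give its own argument: its proof consists solely of the reference ``See proof of Lemma 16.17 in \cite{BPRbook2}.'' Your sketch---reduce to a single infinitesimal by iterating along the chain in Notation~\ref{not:monotone}, then in the atomic case use Hardt semi-algebraic triviality over $(0,t_1)$ together with the monotone nesting and the identification $\bigcap_{t>0}\widetilde{Y}^+_t=Y$ to build a semi-algebraic retraction, and finally transfer---is exactly the standard mechanism behind that reference, and you have correctly isolated the only genuinely delicate point (continuity of the retraction at the $t\to 0$ boundary). One small remark: the cleanest way to handle that boundary is to apply Hardt triviality not to the open family over $(0,t_1)$ but to the \emph{closed} semi-algebraic set $\overline{W}=\{(x,t):t>0,\ x\in\widetilde{Y}^+_t\}\cup(Y\times\{0\})$ over $[0,t_1)$, together with the subfamily $Y\times[0,t_1)$; monotonicity guarantees that the fibre of $\overline{W}$ at $0$ is exactly $Y$, and triviality \emph{respecting the subfamily} then directly yields a deformation retraction of each $\widetilde{Y}^+_t$ onto $Y$ without a separate limit argument.
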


\begin{proof}
See proof of Lemma 16.17 in 
\cite{BPRbook2}.
\end{proof}

\begin{notation}
\label{not:ball}
  For $x \in \R^{k}$ and $R \in \R$, $R>0$, we will denote by $B_{k} (0,R)$
  the open Euclidean ball centered at $0$ of radius $R$.
  We will denote by $\overline{B_k(0,R)}$ the closed Euclidean ball  centered at $0$ of radius $R$.  
   If $\R'$ is a real
  closed extension of the real closed field $\R$ and when the context is
  clear, we will continue to denote by $B_{k} (0,R)$ the extension $\E(B_{k} (0,R) , \R')$, and
  similarly for $\overline{B_k(0,R)}$. This should not cause any confusion.
  Similarly, we will denote by $\Sphere^{k-1}(0,R)$ the sphere of dimension $k-1$ in $\R^k$
  centered at $0$ of radius $R$.
\end{notation}

We refer the reader to \cite[Chapter 6]{BPRbook2} for the definitions of homology and
cohomology groups of semi-algebraic sets over arbitrary real closed fields.

\subsection{Definition of the poset $\mathbf{P}_{m,i}(\Phi)$}

\subsubsection{Simplified view of the definition of the poset $\mathbf{P}_{m,i}(\Phi)$}
Before giving a precise definition of the poset  $\mathbf{P}_{m,i}(\Phi)$, 
we first give a simplified version. 
We make the following two simplifications 
in order to illustrate the key idea.
\begin{enumerate}[(a)]
    \item 
    We ignore the role of the index $i$ in what follows. 
The necessity of the extra parameter $i$ is due to the fact that 
the hypothesis we assume (Hypothesis~\ref{hyp:black-box} in the following paragraph) 
is slightly stronger than we are able to assume 
for designing effective algorithms for computing the poset (see Remark~\ref{rem:cover}).
The actual hypothesis that we use is encapsulated in Property~\ref{property:thm:main} below.
\item
Secondly, 
in order to keep a geometric view of the construction, we will talk about tuples
$\mathcal{S} = (S_j)_{j \in J}$
of semi-algebraic sets,  instead of tuples of formulas $\Phi = (\phi_j)_{j \in J}$ 
defining them. As above, in order to give an effective algorithms, and analyzing its complexity,  we need to describe the poset in terms of formulas rather than sets, which we do in the precise definition that follows this simplified version.  
\end{enumerate}

We make the following hypothesis.

\begin{hypothesis}[Black-box hypothesis]
\label{hyp:black-box}
There exists a black-box (or algorithm) that given a closed and bounded  semi-algebraic set
$S \subset \mathbb{R}^k$ as input,
produces a cover $(S_\alpha)_{\alpha \in \mathcal{C}(S)}$ of $S$ by
closed and bounded $\ell$-connected semi-algebraic sets.
\end{hypothesis}

\begin{definition}[The order complex of a poset]
\label{def:order-complex}
Let $(\mathbf{P},\preceq)$ be a poset.  We denote by $\Delta(\mathbf{P})$ the simplicial
complex whose simplices are chains of $\mathbf{P}$.
\end{definition}

Suppose $\mathcal{S} = (S_j)_{j \in J}$
is a finite tuple of $\ell$-connected closed semi-algebraic subsets of $\mathbb{R}^k$. 

Our goal is to define a poset $\mathbf{P}_{m}(\mathcal{S})$ 
such that:
\begin{property}
\label{property:P}
\[
|\Delta(\mathbf{P}_{m}(\mathcal{S}))| \overset{ch}{\sim}_m
\mathcal{S}^J
\]
(see Definition~\ref{def:order-complex}).
We will say that the poset $\mathbf{P}_{m}(\mathcal{S})$ satisfies Property~\ref{property:P} for the pair $(m,\mathcal{S})$. 
\end{property}

\begin{remark}
\label{rem:cohomological}
We use cohomological $m$-equivalence in Property~\ref{property:P}.  In the final
construction we will lose a dimension while passing from cohomological
equivalence to (homological or homotopical) 
equivalence because of the use of the universal coefficients theorem (see the  proof of Claim~\ref{thm:main:proof:claim:5} inside the proof of Theorem~\ref{thm:main}), and we will end up with  
\[
|\Delta(\mathbf{P}_{m}(\mathcal{S}))| {\sim}_{m-1}
\mathcal{S}^J.
\]
\end{remark}

The main idea is to approximate homotopically the diagram of sets 
\[
(\mathcal{S}_I)_{I \subset J, \card(I) \leq m+2}
\]
(see Notation~\ref{not:diagram-Delta}),
and the inclusion maps 
\[
\mathcal{S}_{I'} \hookrightarrow \mathcal{S}_I, I \subset I',
\]
by a corresponding diagram
of (the geometric realizations of the order complexes of)  posets 
\[
(\mathbf{P}_{m - \card(I)+1,I})_{I \subset J, \card(I) \leq m+2}
\]
(where the poset $\mathbf{P}_{m - \card(I)+1,I}$ corresponds to
$\mathcal{S}_I$),
and poset
inclusions 
\[
\mathbf{P}_{m-\card(I')+1,I'} \hookrightarrow \mathbf{P}_{m - \card(I)+1,I}, I \subset I'.
\]

The construction is by induction on $m$ (we call this the global induction below).

\begin{enumerate}[1.]
    \item (Base case of the global induction,  $m=-1$.)  
    Suppose $\mathcal{S} = (S_j)_{j \in J}$
is a finite tuple of $\ell$-connected closed and bounded  semi-algebraic subsets of $\mathbb{R}^k$. 
We define the poset $\mathbf{P}_{-1}(\mathcal{S})$ to be just the index set $J$, with no non-trivial order relations. 
It is depicted in Figure~\ref{fig:poset:base-case}.
It is clear that 
$\mathbf{P}_{-1}(\mathcal{S})$ satisfies Property~\ref{property:P} for the pair
$(-1, \mathcal{S})$.

    \item (Induction hypothesis of the global induction.)
We assume that for each $m', -1 \leq m' <  m$, and each
finite tuple $\mathcal{S} = (S_j)_{j \in J}$ of $\ell$-connected closed and bounded  semi-algebraic subsets of $\mathbb{R}^k$,
we have defined a poset $\mathbf{P}_{m'}(\mathcal{S})$ satisfying 
Property~\ref{property:P} for the pair $(m',\mathcal{S})$.

    \item (Inductive step of the global induction, going from $<m$ to $m$.)
    Using the inductive hypothesis, 
we now define a poset $\mathbf{P}_{m}(\mathcal{S})$ satisfying 
Property~\ref{property:P} for the pair $(m,\mathcal{S})$,
for any tuple $\mathcal{S}$ of $\ell$-connected closed and bounded semi-algebraic subsets of $\mathbb{R}^k$.

Fix a finite tuple $\mathcal{S} = (S_j)_{j \in J}$
of $\ell$-connected closed and bounded semi-algebraic subsets of $\mathbb{R}^k$. 
We will define $\mathbf{P}_m(\mathcal{S})$ below in steps. 
The poset $\mathbf{P}_m(\mathcal{S})$ as a set will be a disjoint union 
of the index set $J$, and certain subposets $\mathbf{P}_{m - \card(I) +1,I}$, where
$I$ where $I \subset J, 2 \leq \card(I) \leq m+2$.
We define the subposets $\mathbf{P}_{m - \card(I) +1,I}$ by downward induction 
(we call this the local induction below)
on
$\card(I)$, starting from the base case, $\card(I) = m+2$.

\begin{enumerate}[(a)]
\item (Base case of the local induction, $\card(I) = m+2$.)
We first consider the semi-algebraic sets $\mathcal{S}_I, \card(I)= m+2$. Associated to each such $I$, 
we define a poset, which we denoted by $\mathbf{P}_{-1,I}$ as follows:
Using Hypothesis~\ref{hyp:black-box} 
applied to the semi-algebraic set 
$\mathcal{S}_I$ 
we obtain 
a cover 
$(\mathcal{S}_{I,\alpha})_{\alpha \in \mathcal{C}(\mathcal{S}_I)}$ 
of 
$\mathcal{S}_I$
by closed and bounded $\ell$-connected  semi-algebraic sets. 
We define 
\[
\mathbf{P}_{-1,I} = \mathbf{P}_{-1}((\mathcal{S}_{I,\alpha})_{\alpha \in \mathcal{C}(\mathcal{S}_I)}) = \mathcal{C}(\mathcal{S}_I)
\]
with no non-trivial order relation. 
It is depicted in Figure~\ref{fig:poset:base-case}. 
It  is clear that 
$\mathbf{P}_{-1,I}$ satisfies Property~\ref{property:P} for the pair
$(-1, (\mathcal{S}_{I,\alpha})_{\alpha \in \mathcal{C}(\mathcal{S}_I)})$.
\item (Going from $m+2$ to $m+1$.)
Next we consider subsets $I$ of cardinality $m+1$. For each such subset we construct 
a poset $\mathbf{P}_{0,I}$ satisfying two conditions:
\begin{enumerate}
    \item 
    For each set $I'$, with $\card(I') = \card(I)+1$, and $I \subset I'$, the poset $\mathbf{P}_{-1,I'}$ already defined is isomorphic to a sub-poset of $\mathbf{P}_{0,I}$; 
    \item 
    $|\Delta(\mathbf{P}_{0,I})|$ is cohomologically $0$-equivalent to $\mathcal{S}_I$.
\end{enumerate}

We apply Hypothesis~\ref{hyp:black-box}, 
to the semi-algebraic set 
$\mathcal{S}_I$
as input and obtain a cover 
$(\mathcal{S}_{I,\alpha})_{\alpha \in \mathcal{C}(\mathcal{S}_I)}$
of 
$\mathcal{S}_I$ 
by closed and bounded $\ell$-connected semi-algebraic sets.
We let 
\[
\mathbf{P}_{-1,I} = \mathbf{P}_{-1}((\mathcal{S}_{I,\alpha})_{\alpha \in \mathcal{C}(\mathcal{S}_I)}).
\]

Let $J_I$ be the union of the indexing set 
$\mathcal{C}(\mathcal{S}_I)$,
with the posets $\mathbf{P}_{-1,I'}$ for each $I'$ with $I \subset I', \card(I') = \card(I) + 1$. 
Notice that for each $\alpha \in J_I$, there is an $\ell$-connected closed and bounded
semi-algebraic set associated to it. Denote this set by $D(\alpha)$.

For every pair $\alpha,\beta \in J_I$, we again apply Hypothesis~\ref{hyp:black-box} to obtain a cover of
$D(\alpha) \cap D(\beta)$ by $\ell$-connected closed and bounded semi-algebraic sets,
$(S_{I,\alpha,\beta,\gamma})_{\gamma \in I_{\alpha,\beta}}$ 
where $I_{\alpha,\beta} = \mathcal{C}(D(\alpha) \cap D(\beta))$. The poset $\mathbf{P}_{0,I}$ is defined to be the set
$J_I \cup \bigcup_{\alpha,\beta \in J_I} I_{\alpha,\beta}$, and the non-trivial
order relations are $\gamma \precneq \alpha,\beta$ for each $\gamma \in I_{\alpha,\beta}$.
It is depicted in Figure~\ref{fig:poset:inductive-first-step}.

\item (Local induction hypothesis.) 
We assume that we have already defined the posets 
$\mathbf{P}_{m-\card(I')+1,I'}$, with  $\card(I') > \card(I)$. 

\item (Inductive step in general for the local induction.)
We construct the poset $\mathbf{P}_{m-\card(I)+1,I}$
as follows.
We apply Hypothesis~\ref{hyp:black-box} with the semi-algebraic set $\mathcal{S}_I$ as input and 
obtain
a cover 
$(\mathcal{S}_{I,\alpha})_{\alpha \in \mathcal{C}(\mathcal{S}_I)}$ 
of 
$\mathcal{S}_I$ 
by closed and bounded $\ell$-connected semi-algebraic sets.
Let $J_I$ be the union of the indexing set 
$\mathcal{C}(\mathcal{S}_I)$, 
with the posets $\mathbf{P}_{m-\card(I')+1,I'}$ for each $I'$ with $I \subset I', \card(I') = \card(I) + 1$. 
Notice that for each $\alpha \in J_I$, there is an $\ell$-connected closed and bounded
semi-algebraic set associated to it. Denote this set by $D(\alpha)$.

We define the poset $\mathbf{P}_{m - \card(I) + 1,I}$ using the global induction hypothesis.
The global inductive hypothesis gives us that  for any finite tuple of $\ell$-connected closed and bounded semi-algebraic set (in particular, the tuple of sets $(D(\alpha))_{\alpha \in J_I}$) we have defined a poset $\mathbf{P}_{m - \card(I)+1}((D(\alpha))_{\alpha \in J_I})$, 
which satisfies Property~\ref{property:P} for the pair $(m - \card(I)+1, (D(\alpha))_{\alpha \in J_I})$ (since $m - \card(I) +1 < m$). 

We define 
\[
\mathbf{P}_{m -\card(I)+1,I} = \mathbf{P}_{m - \card(I)+1}((D(\alpha))_{\alpha \in J_I}).
\]
This finishes the local induction and we have defined 
$\mathbf{P}_{m - \card(I) +1,I}$, for each
$I \subset J, 2 \leq \card(I) \leq m+2$.
\end{enumerate}

Finally, we define 
\begin{equation}
\label{eqn:P-simplified}
\mathbf{P}_{m}(\mathcal{S})
= J \cup  
\bigcup_{I \subset J, 2 \leq \card(I) \leq m+2} \mathbf{P}_{m -\card(I) +1, I}.
\end{equation}

The partial order in the poset $\mathbf{P}_{m}(\mathcal{S})$ is specified as follows.
By the local induction,  each of the poset $\mathbf{P}_{m -\card(I) +1, I}$ comes with a partial order.
We extend these orders as follows:
\begin{enumerate}[(a)]
    \item For each $I \subset I' \subset J$, with $2 \leq \card(I)\leq \card(I') \leq m+2$,
    there is a subposet of $\mathbf{P}_{m - \card(I)+1,I}$ canonically isomorphic to the poset
    $\mathbf{P}_{m - \card(I')+1,I'}$. For each element $\alpha$ of the former and the corresponding element $\alpha'$ of the latter we set $\alpha' \precneq \alpha$.
    
    \item For each $j \in J$, and $\alpha \in \mathbf{P}_{m -\card(I)+1,I}, j \in I$,
    we set the element $\alpha \precneq j$.
\end{enumerate}
This ends the definition of the poset $\mathbf{P}_m(\mathcal{S})$ completing the global induction. Figure~\ref{fig:poset:last-step} depicts $\mathbf{P}_{m}(\mathcal{S})$ in terms of subposets $\mathbf{P}_{m -\card(I)+1,I}$. In Claim~\ref{claim:proof:thm:main:complexity:4} we will show that the height of the poset $\mathbf{P}_{m}(\mathcal{S})$ is bounded by $2m+2$.

Notice that for any chain $\alpha_k \precneq \alpha_{k-1} \precneq \ldots \precneq \alpha_0$ of elements in $\mathbf{P}_{m}(\mathcal{S})$, we have a sequence of inclusion maps of semi-algebraic sets $D(\alpha_k) \hookrightarrow D(\alpha_{k-1}) \hookrightarrow \ldots \hookrightarrow D(\alpha_0)$. 
It is depicted in Figure~\ref{fig:poset} for a hypothetical space with four elements in the initial covering.
\end{enumerate}

\begin{figure}[h]
	\centering
	\subfigure[ {\footnotesize {{${\rm \bf P}_{-1,I}= {\rm \bf P}_{-1}((\mathcal{S}_{I,\alpha})_{\alpha \in \mathcal{C}(\mathcal{S}_I)})$}}: The elements of the poset, i.e. $J_I$, correspond to the elements of the cover $\mathcal{C}(\mathcal{S}_I)$, with no non-trivial order relation.}]{
		\label{fig:poset:base-case}
		\includegraphics[scale=0.74]{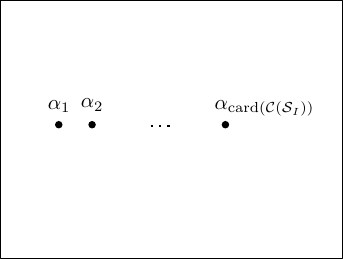}
	} 
	\hspace*{.2cm}
	\subfigure[{\footnotesize {{${\rm \bf P}_{0,I}$}}: 
		At the top level, the elements of ${\rm \bf P}_{0,I}$ correspond to the cover $\mathcal{C}(\mathcal{S}_I)$ and elements of the posets ${\rm \bf P}_{-1, I_i}$, where $\card(I_i) = \card(I)+1 $ and $I \subset I_i$. At the bottom level we have elements of the posets ${\rm \bf P}_{-1}((\mathcal{S}_{I,\alpha_i,\alpha_j,\gamma})_{\gamma \in \mathcal{C}(D(\alpha_i)\cap D(\alpha_j))})$---shown as a box---for every pair $\alpha_i$ and $\alpha_j$ at the top level. The order relations are between the pairs and the elements of their corresponding posets at the bottom level.}]{
		\label{fig:poset:inductive-first-step}
		\includegraphics[scale=0.65]{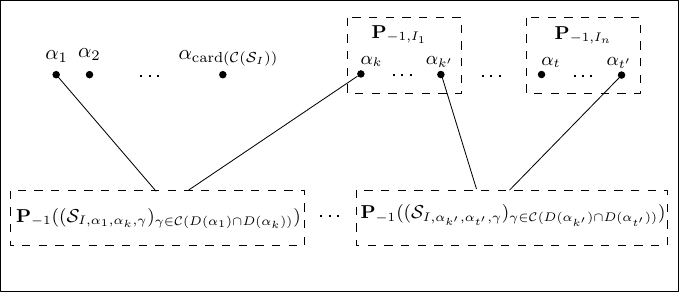} 
	}
	
	\subfigure[{\footnotesize { ${\rm \bf P}_{m}(\mathcal{S}) = J \cup \bigcup_{I \subset J, 2 \leq \card(I) \leq m+2} \mathbf{P}_{m -\card(I) +1, I}$}: The top level of the poset corresponds to the elements of $J$. Next, we have elements of the posets $\mathbf{P}_{m - 1, I_i^{(2)}}$ where $I^{(2)}_i \subset J$ and $\card(I^{(2)}_i) =2$---denoted by the superscript (2). Similarly at the lower levels, we have elements of the posets corresponded to subsets $I_i^{(m')} \subset J$ with $\card(I_i^{(m')})=m'$ and $m' \leq m+2$. The partial order relations are defined between $j \in \{1, \ldots, n\}$ at the top level and the elements of $\mathbf{P}_{m - 1, I_i^{(2)}}$, if $j \in I^{(2)}_i$. Furthermore, in addition to the order relations within each poset, if $I_j^{(m'-1)} \subset I_i^{(m')}$ then $\mathbf{P}_{m-m'+1 , I_i^{(m')}} \hookrightarrow \mathbf{P}_{m-m'+2 , I_j^{(m'-1)}}$, hence for each element $\alpha'$ of the $\mathbf{P}_{m-m'+1 , I_i^{(m')}}$ and the corresponding element $\alpha$ of the $\mathbf{P}_{m-m'+2 , I_j^{(m'-1)}}$ we set $\alpha' \precneq \alpha$.
	}]{
		\label{fig:poset:last-step}
		\includegraphics[width=\linewidth]{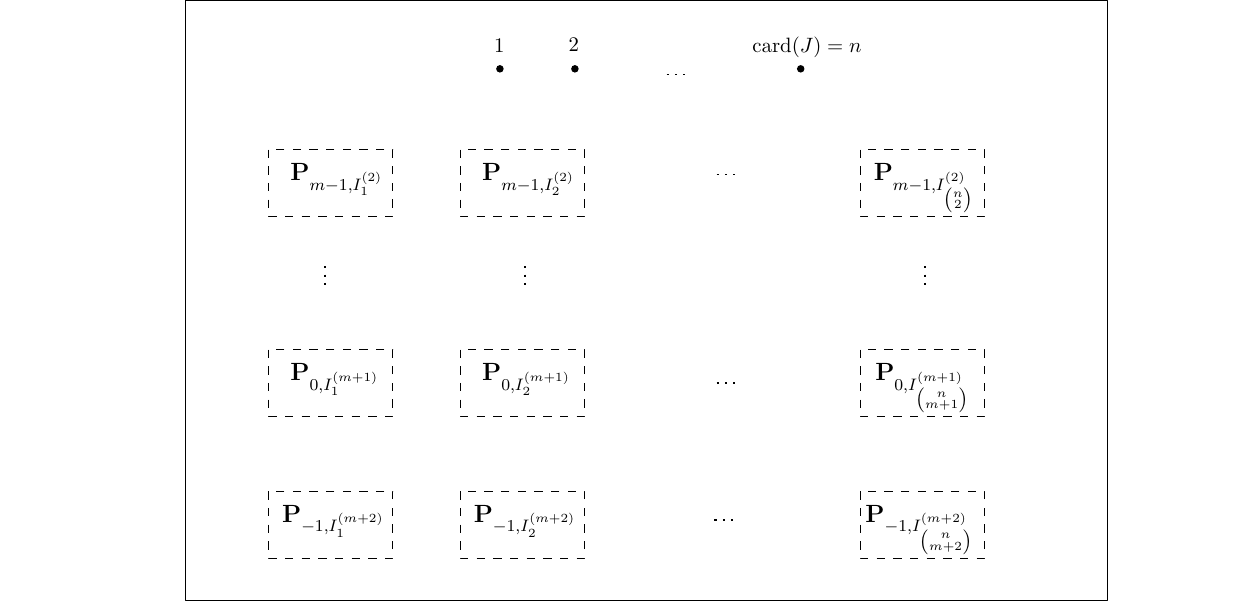} 
	}
		\vspace{-0.4cm}
	\caption{\small A simple illustration of the simplified view of the poset.} \label{fig:simplified_poset}
\end{figure}

\begin{figure}[h]
	\begin{center}
		\includegraphics[scale=0.60]{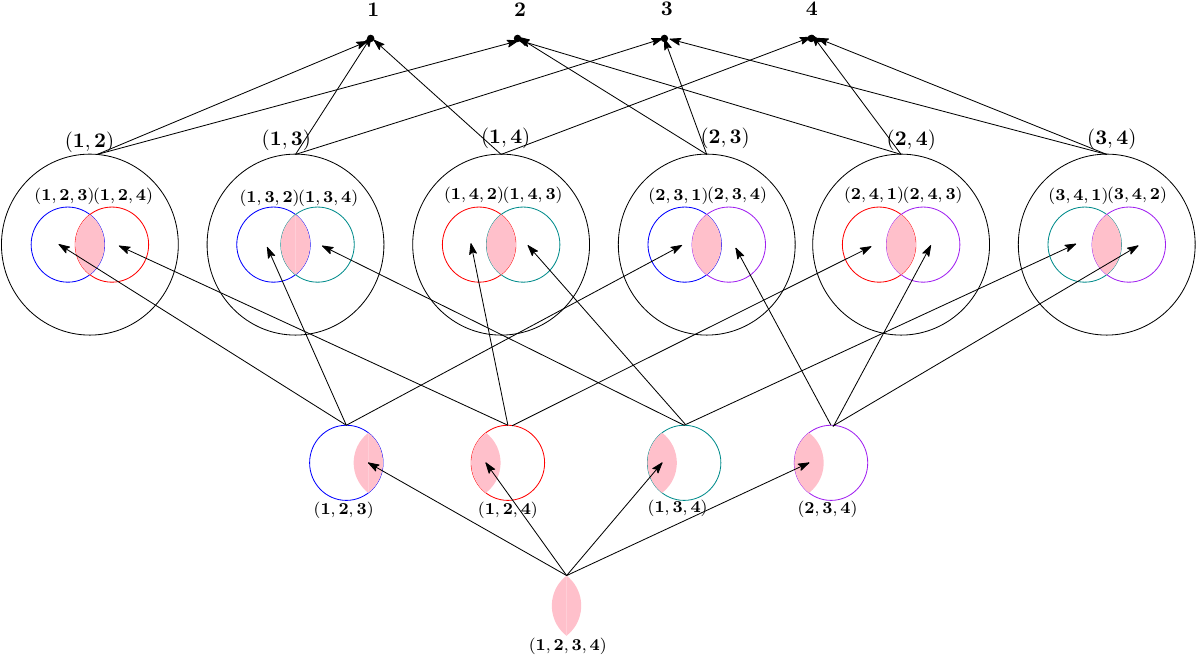}%
	\caption{\small Poset $\mathbf{P}_m(\mathcal{S})$ such that $|\Delta(\mathbf{P}_m(\mathcal{S}))|$ is $m$-equivalent to $\bigcup_{j \in J} S_j$ with $m=2$, $J = \{1,2,3,4\}$.}
	\label{fig:poset}
	\end{center}
\end{figure}

The following two examples are illustrative.

\begin{example}
Let $\ell = \infty, m \geq 2$, $\mathcal{S} = (S_1,S_2)$, where $S_1, S_2$ are the closed upper and lower hemispheres of the unit sphere in $\mathbb{R}^3$ 
(see Figure~\ref{Fig:f1}).

Using \eqref{eqn:P-simplified} we get
\begin{equation}
\label{eqn:P-simplified-example:1}
\mathbf{P}_m(\mathcal{S}) = \{1,2\} \cup \mathbf{P}_{m -2 +1,\{1,2\}}.
\end{equation}

Let 
$\mathcal{C}(\mathcal{S}_{\{1,2\}})$
be the cover of 
$\mathcal{S}_{\{1,2\}}$ 
by two closed semi-circles
$T_3, T_4$, and let $\mathcal{T} = (T_3,T_4)$. 

Note that $T_3 \cap T_4$ is a set containing two points $W_5,W_6$ (say), and 
the only possibility for $\mathcal{C}(T_3 \cap T_4)$, is the tuple $\mathcal{W} = (W_5,W_6)$.
Then, 
\begin{equation}
\label{eqn:P-simplified-example:2}
\mathbf{P}_{m - 1}(\mathcal{T}) = \{3, 4\} \cup \mathbf{P}_{m - 2, \{3,4\}}
\end{equation}
and the subposet $\mathbf{P}_{m - 2, \{3,4\}}$ is isomorphic to the
poset
\begin{equation}
\label{eqn:P-simplified-example:3}
\mathbf{P}_{m-2}(\mathcal{W}) = \{5,6\}.
\end{equation}
Substituting \eqref{eqn:P-simplified-example:3} into
\eqref{eqn:P-simplified-example:2}
and 
\eqref{eqn:P-simplified-example:2} into 
\eqref{eqn:P-simplified-example:1}
we finally obtain that the Hasse diagram of the poset $\mathbf{P}_m(\mathcal{S})$ is 
	\[
		\xymatrix{
			1 && 2 \\
			3 \ar[u]\ar[rru] &&  4 \ar[u]\ar[llu] \\
			5 \ar[u]\ar[rru] && 6 \ar[u]\ar[llu] 
		}
		\]
The order complex of this poset is homotopy equivalent (in fact, in this case is homeomorphic) to the sphere.
\end{example}

\begin{example}
Now let $\ell = m = 2$, $\mathcal{S} = (S_1,S_2)$, where $S_1, S_2$ are the closed upper and lower hemispheres of the unit sphere in $\mathbb{R}^k$,
$k > 5$. That is $S_1$ (resp. $S_2$) is the intersection of the unit sphere in $\R^k$,
with the set defined by $X_k \geq 0$ (resp. $X_k \leq 0$).

Using \eqref{eqn:P-simplified} we get
\begin{equation*}
\mathbf{P}_m(\mathcal{S}) = \{1,2\} \cup \mathbf{P}_{m -2 +1,\{1,2\}}.
\end{equation*}

Let 
$\mathcal{C}(\mathcal{S}_{\{1,2\}})$ 
be the cover of 
$\mathcal{S}_{\{1,2\}}$ 
by two closed semi-spheres 
$T_3, T_4$,
(i.e. $T_3$ (resp. $T_4$) is the intersection of 
$\mathcal{S}_{\{1,2\}}$ 
with $X_{k-1} \geq 0$ (resp. $X_{k-1} \leq 0$),
and let $\mathcal{T} = (T_3,T_4)$. 

Note that $W_5 = T_3 \cap T_4$ is a $(k-3)$-dimensional sphere, and
since $k > 5$, $W_5$ is $2$-connected and we can take 
$\mathcal{C}(W_5) = (W_5)$.
 
\begin{equation*}
\mathbf{P}_{1}(\mathcal{T}) = \{3, 4\} \cup \{5 \}
\end{equation*}
with Hasse diagram
\[
		\xymatrix{
			3 && 4 \\
		     &5 \ar[lu]\ar[ru]&
		}
		\]

Finally we obtain that the Hasse diagram of the poset $\mathbf{P}_2(\mathcal{S})$ is 
	\[
		\xymatrix{
			1 && 2 \\
			3 \ar[u]\ar[rru] &&  4 \ar[u]\ar[llu] \\
		 &5\ar[lu]\ar[ru]& 
		}
		\]
The order complex of this poset is contractible and is $2$-equivalent (but in this case not homotopy equivalent) to 
$\Sphere^{k-1}$ for $k >5$.
\end{example}

With the definition of the poset $\mathbf{P}_m(\mathcal{S})$ it is possible to
prove the following theorem. We do not include a proof of this theorem since it is subsumed
by Theorem~\ref{thm:main'}.
\begin{theorem*}
With the same notation as in the Definition of $\mathbf{P}_m(\mathcal{S})$ defined above:
\begin{equation*}
|\Delta(\mathbf{P}_{m}(\mathcal{S}))| {\sim}_{m-1} \bigcup_{j \in J} S_j.
\end{equation*}

More generally, we have the diagrammatic homological $(m-1)$-equivalence
\begin{equation*}
(J' \mapsto |\Delta(\mathbf{P}_{m}(\mathcal{S}|_{J'})|)_{J' \in 2^J}  \overset{h}{\sim}_{m-1} \Simp^J(\mathcal{S}),
\end{equation*}
where $\mathcal{S}|_{J'} = (S_j)_{j \in J'}$.
\end{theorem*}

We now return to the precise definition of the poset $\mathbf{P}_{m,i}(\Phi)$ that we
are going to the use.

\subsubsection{Precise definition of $\mathbf{P}_{m,i}(\Phi)$}
We begin with a few useful notation that we will use in the construction. 
\begin{notation}
We will denote by $\mathcal{F}_{\R,k}$ the set of 
quantifier-free formulas
with coefficients in $\R$ and $k$ variables,
whose realizations are closed in $\R^k$.
\end{notation}

We also use the following convenient notation.

\begin{notation}[The relation $\subset_{\leq n}$]
For any $n \in \Z_{\geq 0}$, and sets $A,B$, we will write $A \subset_{\leq n} B$
to mean $A \subset B$ and $0 < \card(A) \leq n$. 
\end{notation}

We are now in a position to define a poset associated to a given finite tuple of formulas that will play
the key technical role in the rest of the paper. 

We first fix the following.
\begin{enumerate}
\item
Let $\R = \R_0 \subset \R_1 \subset \R_2 \subset \cdots $
be a sequence of extensions of real closed fields.
\item
We also fix two  sequences of functions,
\[
\mathcal{I}_{i,k}: \mathcal{F}_{\R_i,k} \rightarrow \Z_{\geq -1},
\]
and
\[
\mathcal{C}_{i,k}: \mathcal{F}_{\R_i,k}  \rightarrow  \bigcup_{p \geq 0}(\mathcal{F}_{\R_{i+1},k})^{[p]},
\]
\end{enumerate}

\begin{remark}
The definition of the poset $\mathbf{P}_{m,i}(\cdot)$ given below does not depend on any specific
properties of the functions $\mathcal{I}_{i,k}(\cdot)$ and $\mathcal{C}_{i,k}(\cdot)$. 
Later we will prove key topological properties of 
$\mathbf{P}_{m,i}(\cdot)$ (see Theorems~\ref{thm:main} and \ref{thm:main'} below) under certain  assumptions on $\mathcal{I}_{i,k}(\cdot)$ and $\mathcal{C}_{i,k}(\cdot)$ (see Properties~\ref{property:thm:main} and 
\ref{property:thm:main'} below).
\end{remark}

For each $i \geq 0$, and 
$-1 \leq m \leq k$,  
a non-empty finite set $J$, and 
$\Phi \in (\mathcal{F}_{\R_i,k})^J$,
we define
a poset $(\mathbf{P}_{m,i}(\Phi), \prec)$.

We first need an auxilliary definition which will be used in the definition
of the underlying set, $\mathbf{P}_{m,i}(\Phi)$,
of the  poset $(\mathbf{P}_{m,i}(\Phi), \prec)$.

\begin{definition}
\label{def:J}
Let $J$ be a non-empty finite set, and $\Phi \in (\mathcal{F}_{\R_i,k})^J$. 
We first define for each  subset $I \subset_{\leq m+2} J$, 
a set $J_{m,i,I,\Phi}$, and 
an 
element $\Phi_{m,i,I,J} \in (\mathcal{F}_{\R_{i+1},k})^{J_{m,i,I,\Phi}}$
(using downward induction on $\card(I)$).

\noindent
Base case ($\card(I) = m+2$): In this case we define,
\begin{equation}
\label{eqn:J:base}
J_{m,i, I,\Phi} = \{I\} \times [ \mathcal{I}_{i,k}( \bigwedge_{j \in I} \Phi(j))],
\end{equation}
and for $(I,p) \in J_{m,i,I,\Phi}$,
\[
\Phi_{m,i, I,J}((I,p)) =  \mathcal{C}_{i,k}( \bigwedge_{j \in I} \Phi(j))(p).
\]
\noindent
Inductive step: Suppose we have defined $J_{m,i,I',\Phi}$ and $\Phi_{m,i, I',J}$ for all $I'$ with $\card(I') = \card(I) +1$.
We define 
\begin{equation}
\label{eqn:J:induction}
J_{m,i,I,\Phi} = \left(\{I\} \times   [ \mathcal{I}_{i,k}( \bigwedge_{j \in I} \Phi(j)] \right) \cup \bigcup_{I \subset I' \subset J, \card(I') = \card(I)+1} J_{m,i,I',\Phi},
\end{equation}
and 

\begin{eqnarray*}
\Phi_{m,i,I,J}(\alpha) &=&   \mathcal{C}_{i,k}( \bigwedge_{j \in I} \Phi(j))(p), \mbox{ if $\alpha =(I,p) \in \{I\} \times [ \mathcal{I}_{i,k}( \bigwedge_{j \in I} \Phi(j))]$}, \\
&=& 
\Phi_{m,i,I',J}(\alpha), \mbox{ if $\alpha \in J_{m,i,I',\Phi}$ for some $I' \supset I$, with}  \\
&& \hspace*{2.1cm} \card(I') = \card(I)+1.
\end{eqnarray*}
\end{definition}

The following properties of  
$J_{m,i,I,\Phi}$ and $\Phi_{m,i,I,J}$ are obvious from the above definition.
Using  the same notation as in Definition~\ref{def:J}:
\begin{lemma}
\label{lem:J}
\begin{enumerate}[(a)]
\item $\card(J_{m,i,I,\Phi}) < \infty$ for each $I \subset_{\leq m+2} J$;
\item  
For $I,I' \subset J$ with $\card(I \cup I') \leq m+2$,
\[
J_{m,i, I \cup I',\Phi}\subset J_{m,i,I,\Phi}\cap J_{m,i,I',\Phi}.
\]
\item
\label{itemlabel:lem:J:c}
If $I'\subset  I \subset_{\leq m+2} J \subset J'$, then
$J_{m,i,I,\Phi} \subset J'_{m,i,I',\Phi}$, and for $\alpha \in J_{m,i,I,\Phi}$,
$\Phi_{m,i,I,J}(\alpha) = \Phi_{m,i,I',J'}(\alpha)$.
\end{enumerate}
\end{lemma}

\begin{proof}
Follows directly from Definition~\ref{def:J}.
\end{proof}

We now define the  set
$\mathbf{P}_{m,i}(\Phi)$.

\begin{definition}[The underlying set 
of the poset $(\mathbf{P}_{m,i}(\Phi), \prec)$]
\label{def:poset}
We define the set $\mathbf{P}_{m,i}(\Phi)$ 
using induction on $m$. 

\noindent
Base case ($m=-1$):  For each finite set $J$, and $\Phi \in (\mathcal{F}_{\R_i,k})^J$ we define 

\[
\mathbf{P}_{-1,i}(\Phi) = \bigcup_{j \in J} \{\{j\}\} \times \{\emptyset \}.
\]

\noindent
Inductive step: Suppose we have defined the sets $(\mathbf{P}_{m',i'}(\Phi'),\prec)$ for all $m'$ with $-1 \leq m' < m$,  $i' \geq 0$,
for all non-empty finite sets $J'$ and all $\Phi' \in (\mathcal{F}_{\R_{i'},k})^{J'}$. 

We complete the inductive step by defining: 
\begin{equation}
\label{eqn:P}
\mathbf{P}_{m,i}(\Phi)  = \bigcup_{j \in J} \{\{j\}\} \times \{\emptyset\} \cup  \bigcup_{I \subset J, 1 < \card(I) \leq m+2}   \{I\} \times \mathbf{P}_{m -\card(I) +1,i+1}(\Phi_{m,i,I,J}).
\end{equation}
\end{definition}

We  now specify the partial order on 
$\mathbf{P}_{m,i}(\Phi)$.
For this it will be useful to have 
the following alternative characterization of the elements of the 
poset $\mathbf{P}_{m,i}(\Phi)$  as \emph{tuples of sets}. This characterization follows simply by unravelling the inductive definition of 
the set $\mathbf{P}_{m,i}(\Phi)$ given above.

\subsubsection{Characterization of the elements of the 
poset $\mathbf{P}_{m,i}(\Phi)$  as tuples of sets}
\label{subsubsec:tuple-of-sets}
The elements of $\mathbf{P}_{m,i}(\Phi)$ are all finite  tuples of sets (of varying lengths) 
\[
(I_0,\ldots,I_{r},\emptyset),
\]
satisfying the following conditions.
\begin{enumerate}[1.]
\item 
$I_0$ is a subset of  $J_0= J$, 
$\card(I_0) = 1$ if $r = 0$, and  
$2 \leq  \card(I_0) \leq m+2$ otherwise.
\item 
$I_1$ is a subset of 
$J_1 = \left(J_0\right)_{m_0,i_0,I_0,\Phi_0}$ (see Eqn. \eqref{eqn:J:induction}, Definition~\ref{def:poset})
with
\begin{eqnarray*}
m_0 &=& m,\\
i_0 &=& i, \\
\Phi_0 &=& \Phi,
\end{eqnarray*}
and 
\[
2 \leq \card(I_1) \leq (m_0 - \card(I_0) + 1) +2.
\]
\item
$I_2$ is a subset of $J_2 = \left(J_1 \right)_{m_1,i_1,I_1, \Phi_{1}}$, 
where
\begin{eqnarray*}
m_1 &=& m_0 - \card(I_0) + 1,\\
i_1 &=& i_0+1, \\
\Phi_{1} &=& (\Phi_0)_{m_0,i_0,I_0, J_0},
\end{eqnarray*}
and
\[
2 \leq \card(I_2) \leq (m_1- \card(I_1) +1) + 2.
\]
\item
Continuing in the above fashion, 
\begin{equation}
    \label{eqn:P-alternate}
    I_{r-1} \subset  J_{r-1} = \left(J_{r-2} \right)_{m_{r-2},i_{r-2}, I_{r-2}, \Phi_{r-2}},
\end{equation}
where 
\begin{eqnarray*}
m_{r-2} &=& m_{r-3} - \card(I_{r-3}) + 1, \\
i_{r-2} &=& i_{r-3}+1, \\
\Phi_{r-2} &=& (\Phi_{r-3})_{m_{r-3},i_{r-3},I_{r-3}, J_{r-3}},
\end{eqnarray*}
and
\begin{equation}
\label{eqn:P-alternate:2}
2 \leq \card(I_{r-1}) \leq m_{r-2} +2 =
(m +r -1 - \sum_{j=0}^{r-2}\card(I_j)) + 2.
\end{equation}
\item
Finally,
\[
I_{r} \subset 
J_{r} =\left(J_{r-1} \right)_{m_{r-1},i_{r-1},I_{r-1}, \Phi_{r -1}},
\]
where
\[
\Phi_{r -1} = (\Phi_{r -2})_{m_{r-2},i_{r-2},I_{r -2},J_{r-2}},
\]
and
\[
\card(I_r) =  1.
\]
\end{enumerate}
(We show later (see Claim~\ref{claim:proof:thm:main:complexity:1}) 
that for tuples $(I_0,\ldots,I_r,\emptyset)$ satisfying the above conditions,
$r \leq m+1$.)

\begin{definition}[Partial order on $\mathbf{P}_{m,i}(\Phi)$]
\label{def:order-on-P}
The partial order $\prec$ on $\mathbf{P}_{m,i}(\Phi)$ 
is defined  as follows.

For 
$
\alpha = (I^\alpha_0,\ldots,I^\alpha_{r_\alpha},\emptyset),\beta = (I^\beta_0,\ldots,I^\beta_{r_\beta},\emptyset) \in
 \mathbf{P}_{m,i}(\Phi),
$
 \begin{equation}
 \label{eqn:order-alternate}
 \beta \prec \alpha \Leftrightarrow (r_\alpha \leq r_\beta) \mbox{ and } I^\alpha_j \subset I^\beta_j, 0 \leq j \leq r_\alpha.  
 \end{equation}
\end{definition}

\subsection{Main properties of the poset $\mathbf{P}_{m,i}(\Phi)$}

We will now state and prove the important properties of the poset $\mathbf{P}_{m,i}(\Phi)$ that
motivates its definition. 

\begin{lemma}
\label{lem:property:poset}
For each $J' \subset  J'' \subset J$, and $-1 \leq m' \leq m'' \leq m$,
we have a poset inclusion,
\[
\mathbf{P}_{m',i}(\Phi|_{J'}) \hookrightarrow \mathbf{P}_{m'',i}(\Phi|_{J''}).
\]   
\end{lemma}

\begin{proof}
Follows from Definition~\ref{def:poset} and Part \eqref{itemlabel:lem:J:c} of Lemma~\ref{lem:J}.
\end{proof}

We now state a lemma  which will be useful later, that states a key property of the partial order relation in $\mathbf{P}_{m,i}(\Phi)$. 
Using  the same notation as in Definition~\ref{def:poset}:
\begin{lemma}
\label{lem:property-order}
Suppose that $I' \subset I \subset J$. 
\begin{enumerate}[(a)]
\item
\label{itemlabel:lem:property-order:a}
The poset
$
\mathbf{P}_{m -\card(I) +1,i+1}(\Phi_{m,i,I,J})
$ 
is a subposet of 
$
\mathbf{P}_{m -\card(I') +1,i+1}(\Phi_{m,i,I',J})
$.

\item
\label{itemlabel:lem:property-order:b}
For each $\alpha ,\alpha' \in \mathbf{P}_{m -\card(I) +1,i+1}(\Phi_{m,i,I,J})$,
\[
\alpha \prec_{\mathbf{P}_{m -\card(I) +1,i+1}(\Phi_{m,i,I,J})} \alpha' \Leftrightarrow (I,\alpha) \prec_{\mathbf{P}_{m,i}(\Phi)} (I',\alpha').
\]
\end{enumerate}
\end{lemma}

\begin{proof}
Part~\eqref{itemlabel:lem:property-order:a} follows from the fact that
$J_{m,i,I,\Phi} \subset J_{m,i,I',\Phi}$,
$m -\card(I) +1 \leq  m -\card(I') +1$, and 
Lemma~\ref{lem:property:poset}.

Part~\eqref{itemlabel:lem:property-order:b}
follows immediately from the definition of the partial order
on $\mathbf{P}_{m,i}(\Phi)$ (see Definition~\ref{def:order-on-P}).
\end{proof}

Let $\R$ be a real closed field and $R \in \R, R> 0$.
We say that the tuple  
\[
\left((\R_i)_{i \geq 0},R,k, (\mathcal{I}_{i,k})_{i \geq 0}, (\mathcal{C}_{i,k})_{i \geq 0} \right)
\]
satisfies  the \emph{homological $\ell$-connectivity property over $\R$}  if it satisfies the following conditions.

\begin{property}
\label{property:thm:main}
\begin{enumerate}[1.]
\item
For each $i \geq 0$, $\R_i = \R\la\bar\eps_1,\ldots,\bar\eps_i\ra$ where for $j=1,\ldots,i$,  $\bar{\eps}_j$ denotes the sequence $\eps_{j,1},\eps_{j,2},\ldots$.
\item
For each $\phi \in \mathcal{F}_{\R_i,k}$:
\begin{enumerate}[(a)]
\item
If $\RR(\phi, \overline{B_k(0,R)})$ is empty then, $\mathcal{I}_{i,k}(\phi) = -1$.
\item
\[
\left(\bigcup_{j \in [\mathcal{I}_{i,k}(\phi)]} \RR(\mathcal{C}_{i,k}(\phi)(j), \overline{B_k(0,R)})) \right) \searrow 
\left(\RR(\phi,  \overline{B_k(0,R)})\right)
\]
(see Notation~\ref{not:monotone}).
Notice that in the case $\RR(\phi, \overline{B_k(0,R)})$ is empty,  $\mathcal{I}_{i,k}(\phi)= -1$, hence 
$[\mathcal{I}_{i,k}(\phi)] = \emptyset$, and so  $\bigcup_{j \in [\mathcal{I}_{i,k}(\phi)]} \RR(\mathcal{C}_{i,k}(\phi)(j),\overline{B_k(0,R)})$ is an empty union, and is thus empty as well.
\item
For $j \in [\mathcal{I}_{i,k}(\phi)]$, $\RR(\mathcal{C}_{i,k}(\phi)(j), \overline{B_k(0,R)})) $ is homologically
$\ell$-connected.
\end{enumerate} 
\end{enumerate} 
\end{property}

\begin{notation}
\label{not:phi_t}
Let $\phi$ be a quantifier-free formula with coefficients in $\R[\bar\eps]$. Then $\phi$ is defined over $\R[\bar\eps_1',\bar\eps'_2,\ldots,\bar\eps_i']$ where 
$\bar\eps_j'$ is a finite sub-sequence of the sequence $\bar\eps_j$. For 
$\bar{t} = (\bar{t}_1,\ldots, \bar{t}_i)$, where for $1 \leq j \leq i$, $\bar{t}_j$ is  a tuple of elements
of $\mathbb{R}$ of the same length as $\bar\eps_j'$, we will denote by $\phi_{\bar{t}}$
the formula defined over $\mathbb{R}$ obtained by replacing $\bar\eps'_j$ by $\bar{t}_j$ in the formula $\phi$.
\end{notation}

For any finite sequence $\bar{t} = (t_1,\ldots,t_N)$, by the phrase ``for all sufficiently
small and positive $\bar{t}$'' we will mean `` for all sufficiently small $t_1 \in \mathbb{R}_{> 0}$, and having chosen $t_1$,  for all sufficiently small $t_2 \in \mathbb{R}_{> 0}$, ... ''.

We  will say that 
\[
\left((\R_i)_{i \geq 0},R,k, (\mathcal{I}_{i,k})_{i \geq 0}, (\mathcal{C}_{i,k})_{\geq 0} \right)
\]
satisfies  the \emph{$\ell$-connectivity property over $\R = \mathbb{R}$}  if it satisfies the following conditions.

\begin{manualproperty}{\ref*{property:thm:main}$^\prime$} 
\label{property:thm:main'}
\begin{enumerate}[1.]
\item
$\R_0 = \mathbb{R}$ and for each , $i > 0$, $\R_i = \R\la\bar\eps_1,\ldots,\bar\eps_i\ra$.
\item
For each $\phi \in \mathcal{F}_{\R_i,k}$:
\begin{enumerate}[(a)]
\item
If $\RR(\phi,\overline{B_k(0,R)})$ is empty then, $\mathcal{I}_{i,k}(\phi) = -1$.
\item
\[
\left(\bigcup_{j \in [\mathcal{I}_{i,k}(\phi)]} \RR(\mathcal{C}_{i,k}(\phi)(j), \overline{B_k(0,R)})) \right) \searrow 
\left(\RR(\phi, \overline{B_k(0,R)})\right)
\]

\item
For $j \in [\mathcal{I}_{i,k}(\phi)]$, 
and all sufficiently small and positive $\bar{t}$,
\[
\RR(\mathcal{C}_{i,k}(\phi)(j)_{\bar{t}}, \overline{B_k(0,R)}))
\] 
is $\ell$-connected.
\end{enumerate} 
\end{enumerate} 
\end{manualproperty}

The following two theorems give the important topological properties of the posets defined above
that will be useful for us.

\begin{theorem}
\label{thm:main}
Suppose that the tuple
\[
\left((\R_i)_{i \geq 0}, R, k, (\mathcal{I}_{i,k})_{i \geq 0}, (\mathcal{C}_{i,k})_{i \geq 0} \right)
\]
satisfies  the  homological $\ell$-connectivity property over $\R$
(see Property~\ref{property:thm:main}).
Then, for $-1 \leq m \leq  \ell$, every
finite set $J$, and 
$\Phi \in (\mathcal{F}_{k,\R_i})^J$,
such that for each $j \in J$, $\RR(\Phi(j),\overline{B_k(0,R)})$ is homologically $\ell$-connected, 
\begin{equation}
\label{eqn:thm:main}
|\Delta(\mathbf{P}_{m,i}(\Phi))| \overset{h}{\sim}_{m-1} \RR(\Phi,\overline{B_k(0,R)})^J.
\end{equation}

More generally, we have the diagrammatic homological $(m-1)$-equivalence
\begin{equation}
\label{eqn:thm:main:diagrammatic}
(J' \mapsto |\Delta(\mathbf{P}_{m,i}(\Phi|_{J'})|)_{J' \in 2^J}  \overset{h}{\sim}_{m-1} \Simp^J(\RR(\Phi,\overline{B_k(0,R)})).
\end{equation}
\end{theorem}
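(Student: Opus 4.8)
\textbf{Proof proposal for Theorem~\ref{thm:main}.}

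The plan is to prove the diagrammatic statement \eqref{eqn:thm:main:diagrammatic} directly by induction on $m$, since the non-diagrammatic statement \eqref{eqn:thm:main} is the special case $J'=J$, and the diagram structure is essentially forced once we track it through the recursion in Definition~\ref{def:poset}. The base case $m=-1$ is immediate: $\mathbf{P}_{-1,i}(\Phi)$ is a disjoint union of points indexed by $J$, its order complex $|\Delta(\mathbf{P}_{-1,i}(\Phi))|$ is a discrete set of $\card(J)$ points, and $(-2)$-equivalence only asks for a bijection on $\pi_0$ — wait, more carefully, homological $(m-1)$-equivalence for $m=-1$ is homological $(-2)$-equivalence, which is a vacuous condition, so there is nothing to check. (One should however keep $m=0$ in mind as the first substantive case, where one needs $\pi_0$, i.e.\ $\HH_0$, to match: here $\mathbf{P}_{0,i}(\Phi)$ already involves pairs $I$ with $\card(I)=2$ and the covers $\mathcal{C}_{i,k}$ enter.)

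For the inductive step, the key geometric input is the generalized nerve principle: if $(V_\alpha)_{\alpha\in A}$ is a closed cover of a closed bounded semi-algebraic set $Y$ by $\ell$-connected sets (no Leray hypothesis on intersections), then $Y$ is homologically $(m-1)$-equivalent to a space built from the nerve-like data of the cover \emph{together with} simplicial replacements of the intersections $V_J$, $\card(J)\le m+1$, where each $V_J$ is in turn handled recursively with the connectivity budget dropping by $\card(J)-1$. Concretely, I would set up a generalized Mayer--Vietoris / homotopy-colimit spectral sequence (or, over $\mathbb{R}$, a Bousfield--Kan homotopy colimit argument for Property~\ref{property:thm:main}$'$) for the cover $\RR(\mathcal{C}_{i,k}(\bigwedge_{j\in I}\Phi(j)))$ of each intersection $\RR(\bigwedge_{j\in I}\Phi(j),\overline{B_k(0,R)})$. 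The monotone-shrinking condition (Property~\ref{property:thm:main}(2)(b)) combined with Lemma~\ref{lem:monotone} guarantees that passing to the infinitesimally larger union $\bigcup_j \RR(\mathcal{C}_{i,k}(\phi)(j),\overline{B_k(0,R)})$ does not change the homotopy type, so we may replace each intersection by this union and iterate. The poset $\mathbf{P}_{m,i}(\Phi)$ is precisely the combinatorial bookkeeping device — via \eqref{eqn:P} and its unwinding \eqref{eqn:P-alternate} — that assembles these iterated covers: the $I$-component contributes $\mathbf{P}_{m-\card(I)+1,i+1}(\Phi_{m,i,I,J})$, which by the inductive hypothesis is homologically $(m-\card(I))$-equivalent to $\RR(\bigwedge_{j\in I}\Phi(j),\overline{B_k(0,R)})$ (using the shrinking property to identify the union of the cover with the intersection, and noting the ambient field has moved from $\R_i$ to $\R_{i+1}$, which is harmless by the Puiseux-extension invariance of homology recalled in Notation~\ref{not:extension}).

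The heart of the argument — and the step I expect to be the main obstacle — is proving that the order complex $\Delta(\mathbf{P}_{m,i}(\Phi))$, assembled from these pieces according to the order relation \eqref{eqn:order-alternate}, actually \emph{computes} the homotopy colimit of the diagram $I\mapsto \RR(\bigwedge_{j\in I}\Phi(j),\overline{B_k(0,R)})$ over the poset of subsets $I\subset_{\le m+2} J$, up to degree $m-1$. This requires a homotopy-colimit gluing lemma: one filters $\mathbf{P}_{m,i}(\Phi)$ by $\card(I_0)$ and shows that adding the strata for each $I_0$ with $\card(I_0)=t$ corresponds, on realizations, to gluing in (a simplicial replacement of) the intersection $\RR(\bigwedge_{j\in I_0}\Phi(j))$ along its boundary in the partially-built complex, and that the connectivity drop $m\mapsto m-t+1$ is exactly enough for the nerve-type comparison to hold in the relevant range. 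I would carry this out by a double induction (on $m$ and, internally, on $\card(J)$ or on the filtration by $\card(I_0)$), invoking the Mayer--Vietoris spectral sequence of a closed cover together with the vanishing $\HH_{>0}$ of the cover elements to kill the higher rows up to the needed degree, and using Lemma~\ref{lem:property-order} to guarantee that the subposet inclusions $\mathbf{P}_{m',i}(\Phi|_{J'})\hookrightarrow \mathbf{P}_{m,i}(\Phi)$ are compatible with the diagram maps, which is what upgrades the statement to the diagrammatic equivalence \eqref{eqn:thm:main:diagrammatic}. Over $\R=\mathbb{R}$ the same scheme runs with homotopy groups in place of homology groups, using that a map inducing isomorphisms on $\HH_{\le m-1}$ of simply-ish-connected-enough pieces, glued compatibly, yields an $(m-1)$-equivalence — this will be stated separately as Theorem~\ref{thm:main'} and its proof will differ only in invoking the homotopical (rather than homological) nerve/gluing lemmas and Property~\ref{property:thm:main}$'$.
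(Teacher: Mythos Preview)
Your overall direction is sound, but the paper's proof hinges on a device you have not introduced, and without it the step you flag as ``the main obstacle'' remains a genuine gap. The paper does not argue by direct induction and gluing. Instead it constructs a functor $D_{m,i}(\Phi)\colon \mathbf{P}_{m,i}(\Phi) \to \Top$ sending each $\alpha$ to a closed semi-algebraic subset of $\R_{i+m+1}^k$ (essentially the cover-element encoded by $\alpha$, together with everything below it so that the poset relation becomes honest inclusion), forms $\hocolim(D_{m,i}(\Phi))$, and shows that the two canonical maps $\pi_1$ to $|\Delta(\mathbf{P}_{m,i}(\Phi))|$ and $\pi_2$ to $\bigcup_\alpha D_{m,i}(\Phi)(\alpha)$ (which shrinks to $\RR(\Phi,\overline{B_k(0,R)})^J$ by Lemma~\ref{lem:thm:main:proof:claim:2}) are each homological equivalences in the required range. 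Both are proved by Vietoris--Begle after computing fibers: the fibers of $\pi_1$ are the $\ell$-connected sets $D_{m,i}(\Phi)(\alpha_0)$; the fibers of $\pi_2$ are order complexes of certain subposets of $\mathbf{P}_{m,i}(\Phi)$, and it is \emph{their} $(m-1)$-connectivity that is established by the induction on $m$ and a Mayer--Vietoris spectral sequence --- applied pointwise to each fiber of $\pi_2$, not to the global comparison.

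The gap in your plan is that your inductive hypothesis supplies, for each $I$, only a zig-zag of homological equivalences between $|\Delta(\mathbf{P}_{m-\card(I)+1,i+1}(\Phi_{m,i,I,J}))|$ and the relevant intersection --- not a map. Since the order relation \eqref{eqn:order-alternate} links elements across different $I_0$-strata whenever $I' \subset I$, assembling these zig-zags coherently into one for $|\Delta(\mathbf{P}_{m,i}(\Phi))|$ would require them to be natural in $I$; the diagrammatic part of the inductive hypothesis does not furnish this, being indexed by subsets of $J_{m,i,I,\Phi}$, a different indexing set for each $I$. The functor $D_{m,i}$ sidesteps the problem entirely: all pieces are realized as actual subsets of a single ambient $\R_{i+m+1}^k$, their interrelations are set-theoretic inclusions, and the single space $\hocolim(D_{m,i}(\Phi))$ bridges both sides simultaneously (the diagrammatic statement \eqref{eqn:thm:main:diagrammatic} then falls out from a commuting cube of inclusions). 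To close your argument you would need either to build this functor or to exhibit an explicit compatible family of zig-zags across the $I$-strata.
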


In the case $\R = \mathbb{R}$ we can derive a stronger conclusion from a stronger assumption.

\begin{manualtheorem}{\ref*{thm:main}$^\prime$}
\label{thm:main'}
Suppose that
\[
\left((\R_i)_{i \geq 0},R,k, (\mathcal{I}_{i,k})_{i \geq 0}, (\mathcal{C}_{i,k})_{i \geq 0} \right)
\]
satisfies the $\ell$-connectivity property over $\R = \mathbb{R}$ (cf.
Property~\ref{property:thm:main'}).

Then, 
for $-1 \leq m \leq \ell$, each finite set $J$,  and
$\Phi \in (\mathcal{F}_{\R,k})^J$,
such that for each $j \in J$, $\RR(\Phi(j),\overline{B_k(0,R)})$ is $\ell$-connected, 
\[
|\Delta(\mathbf{P}_{m,i}(\Phi))| \sim_{m-1} \RR(\Phi,\overline{B_k(0,R)})^J.
\]

More generally, we have the diagrammatic $(m-1)$-equivalence:
\begin{equation}
\label{eqn:thm:main':diagrammatic}
(J' \mapsto |\Delta(\mathbf{P}_{m,i}(\Phi|_{J'})|)_{J' \in 2^J}  \sim_{m-1} \Simp^J(\RR(\Phi,\overline{B_k(0,R)})).
\end{equation}
\end{manualtheorem}

Before proving Theorems~\ref{thm:main} and \ref{thm:main'} we discuss an example.

\subsection{Example of the sphere $\Sphere^2$ in $\R^3$}	
In order to illustrate the main ideas behind the definition of the poset, $\mathbf{P}_{m,i}(\Phi)$, defined above we discuss a very simple example. Starting from a cover of the two dimensional unit sphere in $\mathbb{R}^3$ by two closed hemispheres, 
we show how we construct the associated poset. We will assume that there is an algorithm available as a black-box which given any 
closed formula $\phi$ such that $\RR(\phi)$ is bounded,  
produces a tuple of quantifier-free closed formulas as output, such that 

\begin{enumerate}[(a)]
	\item
	the realization of each formula
	in the tuple is contractible;
	\item
	\label{itemlabel:examble:b}
	the union of the realizations is a semi-algebraic set infinitesimally larger than $\RR(\phi) $, and such that $\RR(\phi)$ is a semi-algebraic deformation retract of the union.
\end{enumerate}

\begin{figure}[H]
	\centering
	\subfigure[]{%
		\label{Fig:f1}%
		\includegraphics[height=3.25in]{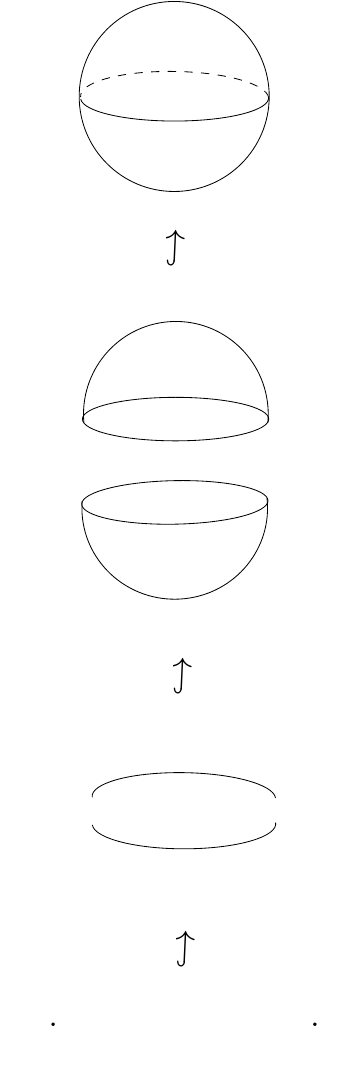}}%
	\qquad
	\subfigure[]{%
		\label{Fig:f2}%
		\includegraphics[height=3.25in]{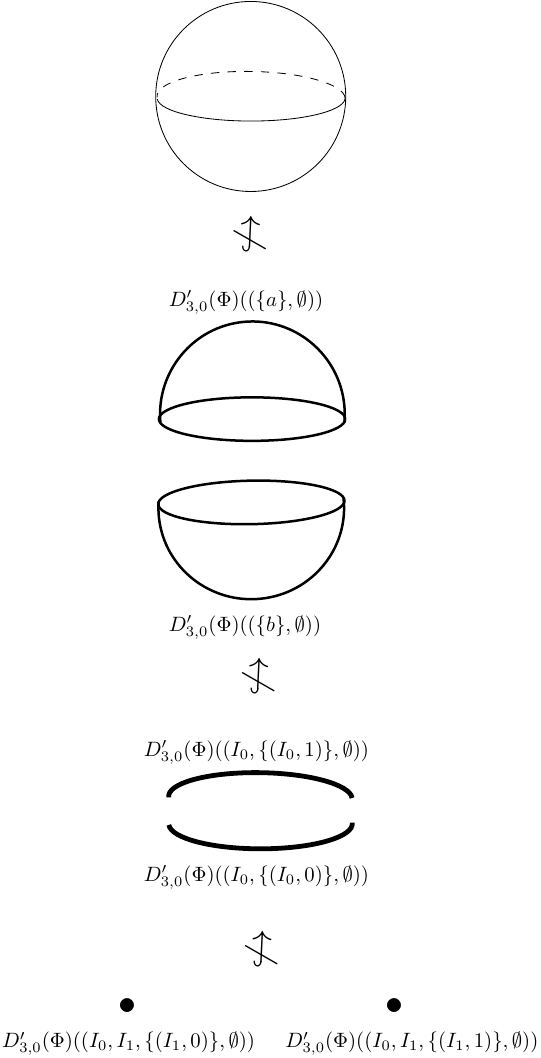}}%
	\qquad
	\subfigure[]{%
		\label{Fig:f3}%
		\includegraphics[height=3.25in]{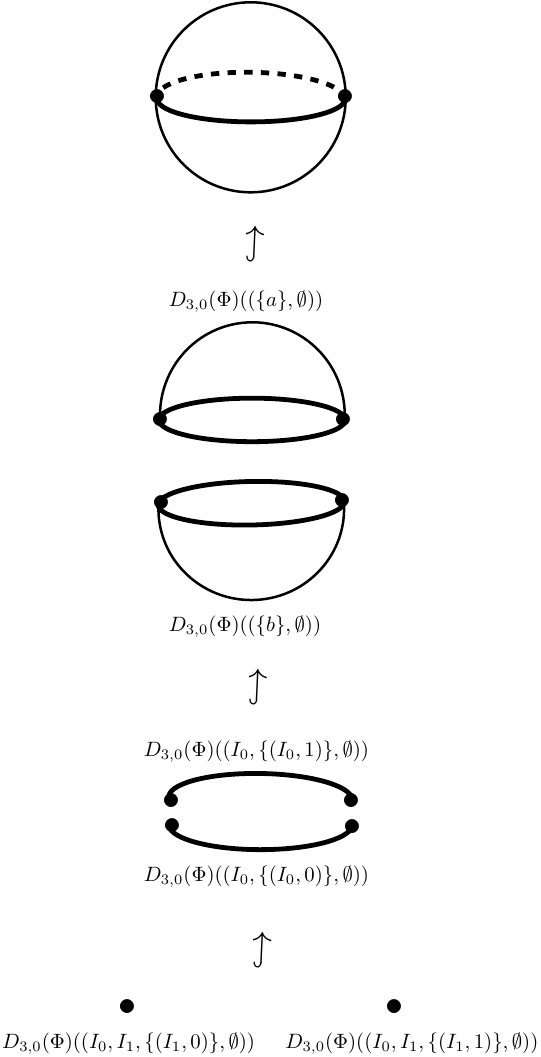}}%
	\caption{\small (a) The ideal situation, (b) $D_{m,i}'(\Phi)(.)$, and (c) $D_{m,i}(\Phi)(.)$ }\label{Fig:Func}
\end{figure}

Therefore, at each step of our construction the cover by contractible sets that we consider, is actually a cover of a semi-algebraic set which is infinitesimally larger than that but with the same homotopy type as the original set. 
As a result, the \emph{inclusion property}  -- namely, that each element of the cover is included in the set that it is part of a cover of -- which is expected from the elements of a cover  will not hold. 

We first describe the situation in the case when Part~\eqref{itemlabel:examble:b} above is replaced with: \\

\noindent
(b$^\prime$) 
the union of the realizations is equal to  $\RR(\phi)$. \\

We call this the \emph{ideal situation}.
Figure~\ref{Fig:f1} displays three levels of the construction in the ideal situation for the sphere. 
In the first step, we have two closed contractible hemispheres that cover the whole sphere. The intersection of the two hemispheres is a circle, and the next level shows the two closed semi-circles as its cover. The bottom level consists of  two points which is  the intersection of these semi-circles. Clearly, the inclusion property holds in this case. 

Unfortunately, as mentioned before we cannot assume that we are in the ideal situation. This is because
the only algorithm with a singly exponential complexity that is currently known for computing covers by contractible sets, satisfies 
Property \eqref{itemlabel:examble:b}  rather than the ideal Property 
(b$^\prime$).   
In the non-ideal situation we will obtain in the first step a cover of an infinitesimally thickened sphere
by two thickened hemispheres where the thickening is in terms of some infinitesimal $\eps_0, 0 < \eps_0 \ll 1$. The intersection of these two thickened hemispheres is a thickened circle, and which is 
covered by two thickened semi-circles whose union is infinitesimally larger than the thickened circle.
The new infinitesimal is $\eps_1$ and $0 < \eps_1 \ll \eps_0 \ll 1$.
Finally, in the next level,  the intersection of the two thickened semi-circles is covered by
two thickened points involving a third infinitesimal $\eps_2$, such that $0 < \eps_2 \ll \eps_1 \ll \eps_0  \ll 1$.

We associate to each element $\alpha \in \mathbf{P}_{m,i}(\Phi)$ two semi-algebraic sets
$D_{m,i}(\Phi)(\alpha)$, $D'_{m,i}(\Phi)(\alpha)$.
The association $D_{m,i}(\Phi)(\cdot)$ is functorial in the sense that if $\alpha, \beta \in \mathbf{P}_{m,i}(\Phi)$, then
$\alpha \prec \beta  \Leftrightarrow D_{m,i}(\Phi)(\alpha) \subset D_{m,i}(\Phi)(\beta)$. This
functoriality is important since it allows us to define the homotopy colimit of the functor $D_{m,i}(\Phi)$. The association $\alpha \mapsto D'_{m,i}(\Phi)(\alpha)$ does not have the functorial property. However, it follows directly from its definition that $D'_{m,i}(\Phi)$ is contractible (or  $\ell$-connected in the more general setting). Finally, we are able to show that $D'_{m,i}(\Phi)(\alpha)$
is homotopy equivalent to $D_{m,i}(\Phi)(\alpha)$ for each $\alpha \in \mathbf{P}_{m,i}(\Phi)$, and 
thus the functor $D_{m,i}(\Phi)$ has the advantage of being functorial as well as satisfying the 
connectivity property. 

In this example, we display $D_{m,i}'(\Phi)(\alpha)$ and $D_{m,i}(\Phi)(\alpha)$ for all different
$\alpha \in \mathbf{P}_{m,i}(\Phi)$ in Figures~\ref{Fig:f2} and \ref{Fig:f3}.

For the rest of this example we assume the covers of sphere are in the ideal situation. This assumption will
not change the poset $\mathbf{P}_{m,i}(\Phi)$ that we construct.

In order to reconcile with the notation used in the definition of the poset
$\mathbf{P}_{m,i}(\Phi)$,
we will assume that the different covers described above (which are not Leray but $\infty$-connected)
correspond to the values of the maps $\mathcal{I}_{i,3}$ and $\mathcal{C}_{i,3}$ evaluated at the corresponding formulas
which we describe more precisely below.\\

\begin{enumerate}[Step 1.]
	\item \label{step:p3}
	Let $a,b$ denote the closed upper and lower hemispheres of the sphere  $\Sphere^2(0,1) \subset \mathbb{R}^3$, defined by formulas 
	\begin{eqnarray*}
		\phi_a &:=&  (X_1^2 + X_2^2 + X_3^2 - 1 = 0) \wedge (X_3 \geq 0), \\
		\phi_b &:=&  (X_1^2 + X_2^2 + X_3^2 - 1 = 0) \wedge (X_3 \leq 0).
	\end{eqnarray*}
	
	Let $J = J_0 =  \{a,b\}$, and	$\Phi \in \mathcal{F}_{\mathbb{R},3}^J$ be defined by $\Phi(a) = \phi_a, \Phi(b) = \phi_b$. Moreover, since $\card(J)=2$,
	
	{\small
		\[
	\mathbf{P}_{3,0}(\Phi) = \{(\{a\}, \emptyset),(\{b\},\emptyset)\} \cup  \bigcup_{I_0 \subset J,  \card(I_0)=2} \{I_0\}\times \mathbf{P}_{2,1}(\Phi_{3,0,I_0,J_0}).
	\]
	}
	
	Following the notation used in Definition~\ref{def:poset},
	let $I_0 = J_0 = J  = \{a,b\}$. \\
	
	\item \label{step:p2}
	We suppose that $\mathcal{I}_{0,3}(\phi_a \wedge \phi_b) = 1$, and 
	$\mathcal{C}_{0,3}(\phi_a \wedge \phi_b)(0) = \phi_c$, $\mathcal{C}_{0,3}(\phi_a \wedge \phi_b)(1) = \phi_d$,
	where 
	\begin{eqnarray*}
		\phi_c &:=&  (X_1^2 + X_2^2 + X_3^2 - 1 = 0) \wedge (X_3 = 0) \wedge (X_2 \geq 0), \\
		\phi_d&:=&  (X_1^2 + X_2^2 + X_3^2 - 1 = 0) \wedge (X_3 = 0)  \wedge (X_2 \leq 0),
	\end{eqnarray*}
	denote the two semi-circles. 
	
	\begin{gather*}
		J_1 = J_{3,0,I_0,\Phi} = \{I_0\}  \times [1] 
		= \{ (I_0,0), (I_0,1)\}, \\
		\Phi_1 = \Phi_{3,0,I_0,J_0}, \\
		\Phi_{1}((I_0,0)) 
		= \phi_c, \\
		\Phi_{1}((I_0,1)) 
		= \phi_d.\\
	\end{gather*}

	{\small
		\[
		\mathbf{P}_{2,1}(\Phi_{1}) = \{(\{(I_0, 0)\}, \emptyset), (\{(I_0, 1)\}, \emptyset)\}  \cup \bigcup_{I_1 \subset J_1,  \card(I_1)=2} \{I_1 \}\times \mathbf{P}_{1,2}((\Phi_1)_{2,1,I_1,J_1}).
		\]
	}
	Now let $I_1 = J_1$. \\
	
	\item \label{step:p1_1}
	Suppose that
	$\mathcal{I}_{1,3}(\phi_c \wedge \phi_d) = 1$, and 
	$\mathcal{C}_{1,3}(\phi_c \wedge \phi_d)(0) = \phi_e$,\\ $\mathcal{C}_{1,3}(\phi_c \wedge \phi_d)(1) = \phi_f$,
	where 
	\begin{eqnarray*}
		\phi_e &:=&  (X_1^2 + X_2^2 + X_3^2 - 1 = 0) \wedge (X_3 = 0) \wedge (X_2 = 0) \wedge (X_1 + 1 = 0), \\
		\phi_f&:=&  (X_1^2 + X_2^2 + X_3^2 - 1 = 0) \wedge (X_3 = 0)  \wedge (X_2 = 0)  \wedge (X_1 - 1 = 0).
	\end{eqnarray*}
	
	\begin{gather*}
		J_2 = (J_1)_{2,1,I_1,\Phi_1} = \{I_1\}  \times [1] 
		= \{ (I_1,0), (I_1,1)\}, \\
		\Phi_2 = (\Phi_1)_{2,1,I_1,J_1} \\
		\Phi_{2}((I_1,0)) = \phi_e, \\
		\Phi_{2}((I_1,1)) = \phi_f. \\
	\end{gather*}
	
	{\small
		\[
		\mathbf{P}_{1,2}(\Phi_{2}) = \{(\{(I_1, 0)\}, \emptyset), (\{(I_1, 1)\}, \emptyset)\}  \cup \bigcup_{I_2 \subset 
		J_2,  \card(I_2)=2} \{I_2 \}\times \mathbf{P}_{0, 3}((\Phi_2)_{1,2,I_2,J_2}).
		\]
	}
	
	 Let $I_2 = J_2$. \\

	\item \label{step:p1}
		Since $\mathcal{I}_{2,3}(\phi_e \wedge \phi_f) = -1$, hence $\mathbf{P}_{0,3}((\Phi_2)_{1,2,I_2,J_2}) = \emptyset$, and from Step~\ref{step:p1_1}
		
				{\small
			\[
			\mathbf{P}_{1,2}(\Phi_{2}) = \{(\{(I_1, 0)\}, \emptyset), (\{(I_1, 1)\}, \emptyset)\}.
			\]
		}

	\item \label{step:p1complete}
	With these choices of the values of $\mathcal{I}_{\cdot,3}$ and $\mathcal{C}_{\cdot,3}$ for the specific formulas described above, and $\ell = \infty$, from Step~\ref{step:p2} and Step~\ref{step:p1}, the Hasse diagram of the poset $\mathbf{P}_{2,1}(\Phi_{1})$ is as follows.
		
		\[
		\xymatrix{
			( \{(I_0,0)\}, \emptyset) &&  ( \{(I_0,1)\}, \emptyset) \\
			(I_1, \{(I_1,0)\},\emptyset) \ar[u]\ar[rru] && (I_1, \{(I_1,1)\},\emptyset) \ar[u]\ar[llu] 
		}
		\]
		\\
	\item Finally, from Step~\ref{step:p3} and Step~\ref{step:p1complete}, the Hasse diagram of the poset  $\mathbf{P}_{3,0}(\Phi)$ is shown  below. 	
		
		\[
		\xymatrix{
			(\{a\},\emptyset) && (\{b\},\emptyset) \\
			(I_0,  \{(I_0,0)\}, \emptyset) \ar[u]\ar[rru] &&  (I_0,  \{(I_0,1)\}, \emptyset) \ar[u]\ar[llu] \\
			(I_0,I_1, \{(I_1,0)\},\emptyset) \ar[u]\ar[rru] && (I_0,I_1, \{(I_1,1)\},\emptyset) \ar[u]\ar[llu] 
		}
		\]
		\\
\end{enumerate}	
	
The order complex, $\Delta(\mathbf{P}_{3,0}(\Phi))$ is displayed below and 
clearly $|\Delta(\mathbf{P}_{3,0}(\Phi))|$ is homeomorphic to $\Sphere^2(0,1)$.

\begin{figure}[h!]
	\begin{center}
	    \label{fig:sphere}
	    \includegraphics[scale=0.75]{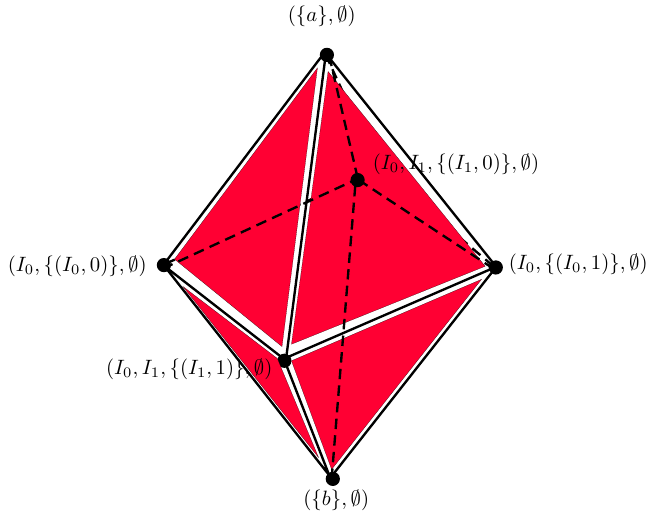}%
	\caption{\small The order complex, $\Delta(\mathbf{P}_{3,0}(\Phi))$}
	\end{center}
	\end{figure}

\subsection{Proofs of Theorems~\ref{thm:main} and \ref{thm:main'}}
In this section we prove Theorem~\ref{thm:main} as well as Theorem~\ref{thm:main'}.
 We first give an outline of the proof of Theorem~\ref{thm:main}.

\subsubsection{Outline of the proof of Theorem~\ref{thm:main}}
In order to prove that $|\Delta(\mathbf{P}_{m,i}(\Phi))|$ is 
homologically $(m-1)$-equivalent to $\RR(\Phi)^J$, we give two homological $(m-1)$-equivalences.
The source of both these maps is a semi-algebraic set which is defined as the homotopy colimit
of a certain functor $D_{m,i}$ from the poset category $\mathbf{P}_{m,i}(\Phi)$ to $\Top$
taking its values in semi-algebraic subsets of $\R^k_{i+m+1}$.
The targets are $|\Delta(\mathbf{P}_{m,i}(\Phi))|$ and
$\RR(\Phi)^J$.
Taken together these two homological $(m-1)$-equivalences imply that
$|\Delta(\mathbf{P}_{m,i}(\Phi))|$ and
$\RR(\Phi)^J$ are homologically $(m-1)$-equivalent.

In what follows, we first define the functor
$D_{m,i}$ as well as an associated map $D'_{m,i}$, also taking values in semi-algebraic
sets, and prove the main properties of these objects that we are going to need in the proof
of Theorem~\ref{thm:main}. 

\subsubsection{Definition of $D_{m,i},D'_{m,i}$}
We now define for each $\alpha = (I_0,\ldots,I_{r},\emptyset) \in \mathbf{P}_{m,i}(\Phi)$, a closed semi-algebraic subset 
$D_{m,i}(\alpha) \subset \overline{B_k(0,R)} \subset \R_{i+m+ 1}^k$, and also  
a semi-algebraic set 
$D'_{m,i}(\alpha) \subset \R_{i+r}^k$.

We define $D_{m,i},D'_{m,i}$ by induction on $m$.
For $m=-1$, we define for  $j \in J$, 
\[
D_{-1,i}(\Phi)((\{j\},\emptyset)) = D'_{-1,i}(\Phi)((\{j\},\emptyset)) = \RR(\Phi(j),\overline{B_k(0,R)}) \subset \R_{i}^k.
\]

We now define  $D_{m,i}(\Phi), D'_{m,i}(\Phi): \mathbf{P}_{m,i}(\Phi) \rightarrow \Top $, using the  fact that 
they are already defined for all $-1 \leq m' < m$. We define:
\begin{eqnarray*}
D_{m,i}(\Phi)((\{j\},\emptyset)) &=&\E(\RR(\Phi(j),\overline{B_k(0,R)}), \R_{i+m+1}) \cup  \\
&&\bigcup_{(I,\alpha) \in \mathbf{P}_{m,i}(\Phi), j \in I}  \E(D_{m-\card(I)+1,i+1}(\Phi_{m,i,I,J})(\alpha),\R_{i+m+1}),
 \\
D_{m,i}(\Phi)((I,\alpha)) &=& \E(D_{m - \card(I) +1, i+1}(\Phi_{m,i,I,J})(\alpha),\R_{i+m+1}), \\
&&  \mbox{ 
$I \subset_{\leq m+2} J, \card(I)> 1$, $\alpha \in \mathbf{P}_{m-\card(I)+1,i+1}(\Phi_{m,i,I,J})$},
\end{eqnarray*}

\begin{eqnarray}
\label{eqn:def:D':0}
D'_{m,i}(\Phi)((\{j\},\emptyset)) &=&
\RR(\Phi(j), \overline{B_k(0,R)}),
\end{eqnarray}
and

\begin{eqnarray*}
D'_{m,i}(\Phi)((I,\alpha)) &=& 
D_{m - \card(I) +1,i+1}'(\Phi_{m,i,I,J})(\alpha),
\end{eqnarray*}
for 
$I \subset_{\leq m+2} J, \card(I)> 1$, $\alpha \in \mathbf{P}_{m-\card(I)+1,i+1}(\Phi_{m,i,I,J})$.

The following lemma is obvious from the definition of 
$D_{m,i}(\alpha)$ given above.
\begin{lemma}
\label{lem:thm:main:proof:claim:0}
For each $\alpha, \beta \in \mathbf{P}_{m,i}(\Phi)$ with $\alpha \prec \beta$,
the morphism $D_{m,i}(\Phi)(\alpha \prec \beta): D_{m,i}(\Phi)(\alpha) \rightarrow D_{m,i}(\Phi)(\beta)$ is an inclusion.
So, 
$D_{m,i}(\Phi)$ is a 
functor  from the poset category $(\mathbf{P}_{m,i}(\Phi), \prec)$ to $\Top$.
\end{lemma}

\begin{remark}
Unlike $D_{m,i}$, $D'_{m,i}$ is not necessarily a functor.
\end{remark}

\begin{lemma}
\label{lem:thm:main:proof:claim:1}
For each $\alpha \in \mathbf{P}_{m,i}(\Phi)$, 
\[
D_{m,i}(\Phi)(\alpha) \searrow D'_{m,i}(\Phi)(\alpha).
\]

\end{lemma}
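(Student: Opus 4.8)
The plan is to argue by induction on $m$. The base case $m=-1$ is immediate, since there $D_{-1,i}(\Phi)((\{j\},\emptyset))=D'_{-1,i}(\Phi)((\{j\},\emptyset))=\RR(\Phi(j),\overline{B_k(0,R)})$ and $X\searrow X$ holds vacuously.

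For the inductive step I would split according to the first component $I_0$ of $\alpha=(I_0,\ldots,I_r,\emptyset)$. If $\card(I_0)\geq 2$, write $\alpha=(I_0,\beta)$; then $D_{m,i}(\Phi)(\alpha)=\E(D_{m-\card(I_0)+1,i+1}(\Phi_{m,i,I_0,J})(\beta),\R_{i+m+1})$ and $D'_{m,i}(\Phi)(\alpha)=D'_{m-\card(I_0)+1,i+1}(\Phi_{m,i,I_0,J})(\beta)$, and the induction hypothesis gives $D_{m-\card(I_0)+1,i+1}(\Phi_{m,i,I_0,J})(\beta)\searrow D'_{m-\card(I_0)+1,i+1}(\Phi_{m,i,I_0,J})(\beta)$. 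One then only needs the elementary remark that $\searrow$ is stable under scalar extension by smaller infinitesimals: if $Y^+\searrow Y$ with $Y^+$ defined over $\R_\ell$ and $\R_{\ell'}\supseteq\R_\ell$ adjoins further blocks $\bar\eps_{\ell+1},\ldots$, then $\E(Y^+,\R_{\ell'})\searrow Y$, the new blocks not occurring in a formula for $Y^+$ and hence contributing trivial limits. Because $\card(I_0)\geq 2$ forces $(i+1)+(m-\card(I_0)+1)+1\leq i+m+1$, this disposes of the case.

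The substantive case is $\card(I_0)=1$, i.e. $\alpha=(\{j\},\emptyset)$, where $D_{m,i}(\Phi)((\{j\},\emptyset))=\E(\RR(\Phi(j),\overline{B_k(0,R)}),\R_{i+m+1})\cup\bigcup_{(I,\gamma):\,j\in I}\E(D_{m-\card(I)+1,i+1}(\Phi_{m,i,I,J})(\gamma),\R_{i+m+1})$ and the target is $\RR(\Phi(j),\overline{B_k(0,R)})$. I would check the two conditions of Notation~\ref{not:monotone} in turn. A finite union of families monotonically decreasing in a fixed block of infinitesimals is again monotonically decreasing; the first term above is constant; and using functoriality of $D_{m-\card(I)+1,i+1}(\Phi_{m,i,I,J})$ to replace the union over all $\gamma$ by the union over the maximal (one-element-shaped) elements $(\{\delta\},\emptyset)$, $\delta\in J_{m,i,I,\Phi}$, the induction hypothesis plus the stability remark give monotonicity in the blocks $\bar\eps_{\geq i+2}$. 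Taking those limits turns the union into $\bigcup_{I\ni j}\bigcup_{\delta\in J_{m,i,I,\Phi}}\RR(\Phi_{m,i,I,J}(\delta),\overline{B_k(0,R)})$, which — unwinding the recursion \eqref{eqn:J:induction} for $J_{m,i,I,\Phi}$ and $\Phi_{m,i,I,J}$ — is a union, over all $I'\subseteq J$ with $j\in I'$, of the \emph{complete} cover unions $\bigcup_p\RR(\mathcal{C}_{i,k}(\bigwedge_{j'\in I'}\Phi(j'))(p),\overline{B_k(0,R)})$, each of which is monotone in $\bar\eps_{i+1}$ by Property~\ref{property:thm:main}(2)(b); a union of monotone families being monotone, the first condition follows. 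For the second, $\lim$ commutes with finite unions, so the full limit equals the limit of the first term, $\RR(\Phi(j),\overline{B_k(0,R)})$, together with the limits of the remaining terms; by Property~\ref{property:thm:main}(2)(b) each of the latter lies in $\bigcup_{I'\ni j}\bigl(\RR(\bigwedge_{j'\in I'}\Phi(j'))\cap\overline{B_k(0,R)}\bigr)\subseteq\RR(\Phi(j))\cap\overline{B_k(0,R)}$, so the overall limit is exactly $\RR(\Phi(j),\overline{B_k(0,R)})$. To keep the induction self-contained I would carry along the auxiliary statement that the iterated limit of $D'_{m,i}(\Phi)((I_0,\ldots,I_r,\emptyset))$ down to $\R_i^k$ lies in $\RR(\bigwedge_{j'\in I_0}\Phi(j'),\overline{B_k(0,R)})$, proved the same way from Property~\ref{property:thm:main}(2)(b).

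The part I expect to be most delicate is the bookkeeping of which blocks of infinitesimals are active at each recursion level — $D_{m,i}$ lands in $\R_{i+m+1}^k$ while $D'_{m,i}(\Phi)(\alpha)$ lands in the smaller $\R_{i+r_\alpha}^k$, so the $\searrow$-relations at different levels involve different, nested families of Puiseux variables — together with, in the $\alpha=(\{j\},\emptyset)$ case, the observation that the thickened pieces contributed by the children reorganize into unions of \emph{complete} covers $\mathcal{C}_{i,k}(\bigwedge_{j'\in I'}\Phi(j'))$ with $j\in I'$; this is exactly what makes Property~\ref{property:thm:main}(2)(b), which is stated for entire covers rather than for individual cover elements, applicable to both the monotonicity and the limit computations.
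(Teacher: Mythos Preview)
Your argument is correct and takes a somewhat different route from the paper's. The paper first records the uniform identity
\[
D_{m,i}(\Phi)(\alpha)\;=\;\E\bigl(D'_{m,i}(\Phi)(\alpha),\R_{i+m+1}\bigr)\;\cup\;\bigcup_{\beta\prec\alpha}D_{m,i}(\Phi)(\beta),
\]
valid for \emph{every} $\alpha$, and then proves the lemma together with the auxiliary statement $\bigcup_{\beta\prec\alpha}D_{m,i}(\Phi)(\beta)\searrow(\cdot)\subset D'_{m,i}(\Phi)(\alpha)$ by well-founded induction on the partial order $\preceq$; the inductive step descends one level by writing $\{\beta:\beta\prec\alpha\}$ as a union of down-sets of elements $\gamma(j''_\alpha)$ one step below $\alpha$. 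You instead induct on $m$ and split on $\card(I_0)$: the case $\card(I_0)\geq2$ falls out of the recursive definition plus stability of $\searrow$ under adjoining unused infinitesimals, while for $\alpha=(\{j\},\emptyset)$ you observe that, after using the induction hypothesis to take limits in the inner blocks $\bar\eps_{\geq i+2}$, the children reorganize into a union of \emph{complete} cover unions indexed by $I'\ni j$ with $2\leq\card(I')\leq m+2$, to each of which Property~\ref{property:thm:main}(2)(b) applies directly. Your approach trades the paper's uniform poset induction for a direct unwinding of the recursion for $J_{m,i,I,\Phi}$ and $\Phi_{m,i,I,J}$; the payoff is a cleaner separation of the two sources of infinitesimals (the block $\bar\eps_{i+1}$ from $\mathcal{C}_{i,k}$ versus the deeper blocks handled by induction on $m$), at the cost of treating the two shapes of $\alpha$ by separate arguments.
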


\begin{proof}
Let 
\[
\alpha = (I_0^\alpha,\ldots, I_{r_\alpha}^\alpha = \{j_\alpha\}, \emptyset)
\] 
with $I_h^\alpha \subset J_h^\alpha, 0 \leq h \leq r_\alpha$ following the same notation
as in Section~\ref{subsubsec:tuple-of-sets} (with an added superscript ${}^\alpha$).

First observe that
\begin{equation}
\label{eqn:thm:main:proof:claim:1.1:1}
D_{m,i}(\Phi)(\alpha) = \E(D'_{m,i}(\Phi)(\alpha), \R_{i+m+1}) \cup \bigcup_{\beta \precneq \alpha} D_{m,i}(\Phi)(\beta). 
\end{equation}

We now prove 
that for each $\alpha \in \mathbf{P}_{m,i}(\Phi)$:
\begin{equation}
\label{eqn:thm:main:proof:claim:1.1:3}
D_{m,i}(\Phi)(\alpha) \searrow D'_{m,i}(\Phi)(\alpha),
\end{equation}
and
\begin{equation}
\label{eqn:thm:main:proof:claim:1.1:4}
\bigcup_{\beta \precneq \alpha} D_{m,i}(\Phi)(\beta) \searrow  \bigcup_{\beta \precneq \alpha} \lim_{\bar\eps_{i+r_\alpha+1}} D_{m,i}'(\Phi)(\beta) \subset D'_{m,i}(\Phi)(\alpha).
\end{equation}

The proof is by induction on the maximum length, $\length(\alpha)$, of any chain
with $\alpha$ as the maximal element.\\

We first note that if $\R' = \R\la\bar\eps\ra$, and $X \subset \R^k$ is
a semi-algebraic subset, then 
\[
\lim_{\bar\eps} \E(X,\R') = \overline{X}.
\]
This follows easily from the definition of $\E(X,\R')$ and 
standard properties of $\lim_{\bar\eps}$.
In particular, if $X$ is a closed semi-algebraic set, then 
\[
\lim_{\bar\eps} \E(X,\R') = X.
\]

\noindent{Base case of the induction,  $\length(\alpha) = 1$:}
It follows from \eqref{eqn:thm:main:proof:claim:1.1:1} and the fact that
that $D'_{m,i}(\Phi)(\alpha)$ is a closed semi-algebraic set, that
\eqref{eqn:thm:main:proof:claim:1.1:3} holds if $\alpha$ is 
a minimal element of the poset $\mathbf{P}_{m,i}(\Phi)$ (and so
$\length(\alpha) = 1$). In this case
\eqref{eqn:thm:main:proof:claim:1.1:4} is trivially true.\\

\noindent{Induction hypothesis:}
We assume now that \eqref{eqn:thm:main:proof:claim:1.1:3} and 
\eqref{eqn:thm:main:proof:claim:1.1:4} is true for all 
$\alpha \in \mathbf{P}_{m,i}(\Phi)$, with $\length(\alpha) < t$.\\

\noindent{Inductive step:}
Suppose that $\alpha \in \mathbf{P}_{m,i}(\Phi)$, with $\length(\alpha) = t$.
The inductive hypothesis implies that \eqref{eqn:thm:main:proof:claim:1.1:3} and \eqref{eqn:thm:main:proof:claim:1.1:4}
both hold with $\alpha$ replaced by $\alpha'$ for 
all $\alpha'  \precneq \alpha$.

Using the fact that $D'_{m,i}(\Phi)(\alpha)$ is closed,
it is easy to check that 
\eqref{eqn:thm:main:proof:claim:1.1:4} implies \eqref{eqn:thm:main:proof:claim:1.1:3}.
So we need to prove only \eqref{eqn:thm:main:proof:claim:1.1:4}.
Using the induction hypothesis we have 
for each $\beta \precneq \alpha$
\begin{equation}
\label{eqn:thm:main:proof:claim:1.1:5}
\bigcup_{\beta \precneq \alpha} D_{m,i}(\Phi)(\beta) \searrow \bigcup_{\beta \precneq \alpha} D_{m,i}'(\Phi)(\beta).
\end{equation}

Now observe that  for any $\beta \in \mathbf{P}_{m,i}(\Phi)$, 
$\beta \precneq \alpha$ if and only if there exist $j_\alpha' \in I_{r_\alpha -1}^\alpha, j_\alpha' \neq j_\alpha$ and $j_\alpha'' \in (J_{r_\alpha}^\alpha)_{m^\alpha_{r_\alpha}, i^\alpha_{r_\alpha},\{j_\alpha,j_\alpha'\},\Phi_{r_\alpha}}$, such that 
\[
\beta \prec \gamma(j_\alpha'')= (I_0^\alpha,\ldots,I_{r_\alpha-1}^\alpha,\{j_\alpha,j_\alpha'\},\{j_\alpha''\},\emptyset),
\]
where we assume that $I_{-1}^\alpha = J$.

Using the above observation we have that
\begin{equation}
\label{eqn:thm:main:proof:claim:1.1:6}
\bigcup_{\beta \precneq \alpha} D_{m,i}'(\Phi)(\beta) = \bigcup_{j_\alpha' \in I^\alpha_{r_\alpha -1}, j_\alpha' \neq j_\alpha} 
 \bigcup_{j_\alpha'' \in  (J_{r_\alpha}^\alpha)_{m^\alpha_{r_\alpha}, i^\alpha_{r_\alpha},\{j_\alpha,j_\alpha'\},\Phi_{r_\alpha}}} \left( \bigcup_{\beta \prec \gamma(j_\alpha'')} D'_{m,i}(\Phi)(\beta) \right),
\end{equation}
where 
\[
\gamma(j_\alpha'')= (I_0^\alpha,\ldots,I_{r_\alpha-1}^\alpha,\{j_\alpha,j_\alpha'\},\{j_\alpha''\},\emptyset).
\]

Applying hypothesis \eqref{eqn:thm:main:proof:claim:1.1:4} we have that 

\begin{equation}
\label{eqn:thm:main:proof:claim:1.1:7}
\left( 
\bigcup_{\beta \precneq \gamma(j_\alpha'')} D'_{m,i}(\Phi)(\beta)
\right)  \searrow \lim_{\bar\eps_{i+r+2}} \bigcup_{\beta \precneq \gamma(j_\alpha'')} D'_{m,i}(\Phi)(\beta) \subset  D'_{m,i}(\Phi)(\gamma(j_\alpha'')).
\end{equation}
Also observe that, 
\begin{equation}
\label{eqn:thm:main:proof:claim:1.1:8}
\left(  \bigcup_{j_\alpha'' \in J_{m,i,\{j_\alpha,j_\alpha'\},\Phi}} D'_{m,i}(\Phi)(\gamma(j_\alpha'')) \right)  \searrow
\left(D'_{m,i}(\Phi)(\alpha) \cap D'_{m,i}(\Phi)(\alpha') \right) \subset D'_{m,i}(\Phi)(\alpha),
\end{equation}
where 
\[
\alpha' = (I_0^\alpha,\ldots, I_{r_\alpha-1}^\alpha, \{j_\alpha'\}, \emptyset).
\]

Finally,
\eqref{eqn:thm:main:proof:claim:1.1:4} now follows from
\eqref{eqn:thm:main:proof:claim:1.1:5}, \eqref{eqn:thm:main:proof:claim:1.1:6},
\eqref{eqn:thm:main:proof:claim:1.1:7},  and \eqref{eqn:thm:main:proof:claim:1.1:8}.

\end{proof}

\begin{lemma}
\label{lem:thm:main:proof:claim:2}
\[
\left( \bigcup_{\alpha \in \mathbf{P}_{m,i}(\Phi)} D_{m,i}(\Phi)(\alpha) \right) \searrow  \RR(\Phi, \overline{B_k(0,R)})^J.
\]
In particular,
$\E(\RR(\Phi, \overline{B_k(0,R)})^J, \R_i)$ is a semi-algebraic deformation
retract of $\bigcup_{\alpha \in \mathbf{P}_{m,i}(\Phi)} D_{m,i}(\Phi)(\alpha) $.
\end{lemma}

\begin{proof}
First note that for each $j \in J$, $(\{j\},\emptyset)$ is a maximal
element of the poset $\mathbf{P}_{m,i}(\Phi)$. It now follows from 
Lemma~\ref{lem:thm:main:proof:claim:0} that
\[
\bigcup_{\alpha \in \mathbf{P}_{m,i}(\Phi)} D_{m,i}(\Phi)(\alpha) 
= \bigcup_{j \in J} D_{m,i}(\Phi)((\{j\},\emptyset)). 
\]
The lemma now follows from Lemma~\ref{lem:thm:main:proof:claim:1} and Eqn.\eqref{eqn:def:D':0}.
\end{proof}

\begin{notation}
\label{not:retraction}
We will denote the deformation retraction
\[
\bigcup_{\alpha \in \mathbf{P}_{m,i}(\Phi)} D_{m,i}(\Phi)(\alpha) \rightarrow \E(\RR(\Phi, \overline{B_k(0,R)})^J, \R_i)
\]
in Lemma~\ref{lem:thm:main:proof:claim:2}
by $r_{m,i}(\Phi)$.
\end{notation}

In the proof of Theorem~\ref{thm:main} we need the notion of the \emph{homotopy colimit}
of a functor which we define below. 

We fix a real closed field $\R$ in the following definition.
\begin{definition}[The topological standard $n$-simplex]
We denote by
\[
|\Delta^n | = \{ (t_0, \ldots, t_n) \in \R^{n+1}_{\geq 0} | \sum_{i=0}^n t_i = 1 \}
\]
the 
standard $n$-simplex defined over $\R$.	
For $0 \leq i \leq n$, we define the 
face operators, 
\[
d^i_n: |\Delta^{n-1}| \rightarrow |\Delta^{n}|,
\] 
by
\[
d^i_n(t_0,\dots,t_{n-1}) = (t_0,\dots,t_{i-1}, 0, t_{i}, \dots,t_{n-1}).
\]
\end{definition}

	\begin{definition}[Homotopy colimit]
		\label{def:hocolim}
		Let $(\mathbf{P},  \prec)$ be a poset category and 
		\[
		D:(\mathbf{P}, \prec) \rightarrow \Top
		\] 
		a functor taking its values in closed and bounded semi-algebraic subsets of $\R^k$,  and such that the morphisms $D(\alpha \prec \beta)$ are 
		inclusion maps.
		The homotopy colimit of $D$ is the quotient space
		\footnote{which is a semi-algebraic set defined over $\R$,  being a quotient space of a proper semi-algebraic equivalence relation, (see for example \cite[page 166]{Dries})}
		\[
		\hocolim(D) = \left(\coprod_{\alpha_0 \precneq \cdots \precneq \alpha_p} |\Delta^p|  \times D(\alpha_0) \right) \Big/ \hspace*{-.1cm} \sim \ ,
		\]
		where the equivalence relation $\sim$ is defined as follows.
		
		For a chain $\sigma = (\alpha_0 \precneq \cdots \precneq \alpha_p)$,
		$t \in |\Delta^p|$, and $x \in D(\alpha_0)$, 
		we denote by $(t,x)_\sigma$, the
		image of $(t,x)$ under the canonical inclusion of $|\Delta^p|  \times D(\alpha_0)$ (corresponding to the chain $\sigma$)
		in the disjoint union
		$\coprod_{\alpha_0 \precneq \cdots \precneq \alpha_p} |\Delta^p|  \times D(\alpha_0)$.
		
		Using the above notation the equivalence relation $\sim$ is defined by:
		\vspace*{0.15cm}
		\begin{equation} 
		\label{eqn:def:hocolimit}
		 (d^i_p (t), x)_\sigma \sim (t, x)_{\sigma'}, 
		\end{equation}
		for $x \in D(\alpha_0)$ and $t \in  |\Delta^{p-1}|$, where $\sigma = (\alpha_0 \precneq \cdots \precneq \alpha_p)$ and
		\vspace*{0.25cm}
		$$
		\sigma' = 
		\left\{
 \begin{array}{ll} 
	(\alpha_1  \precneq \cdots \precneq \alpha_p)  &   \text{ if } i =0, \\ 
	(\alpha_0 \precneq \cdots \alpha_{i-1} \precneq \alpha_{i+1} \precneq \cdots \precneq \alpha_p) &  \text{ if } 0 < i < p, \\ 
	(\alpha_0  \precneq \cdots \precneq \alpha_{p-1}) &  \text{ if }  i = p.
\end{array} \right.
		$$
	
We denote by $\pi^D_1: \hocolim(D) \rightarrow |\Delta(\mathbf{P})|$, 
		$\pi^D_2: \hocolim(D) \rightarrow \colim(D)$ the canonical maps, 
		where $|\Delta(\mathbf{P})|$ is the geometric realization of the order complex of $\mathbf{P}$ (see Definition~\ref{def:order-complex}).
More precisely, $\pi^D_1$ is the map induced from the projection map
\[
\coprod_{\alpha_0 \precneq \cdots \precneq \alpha_p} |\Delta^p|  \times D(\alpha_0) \rightarrow
\coprod_{\alpha_0 \precneq \cdots \precneq \alpha_p} |\Delta^p| 
\]
after taking the quotient by $\sim$,
and 	
$\pi^D_2$ is  the composition of the map induced by the projection
\[
\coprod_{\alpha_0 \precneq \cdots \precneq \alpha_p} |\Delta^p|  \times D(\alpha_0) \rightarrow
\coprod_{\alpha_0 \precneq \cdots \precneq \alpha_p} D(\alpha_0),
\]
and the canonical map to the colimit of the functor $D$, 
which in this case is simply the union $\bigcup_{\alpha \in \mathbf{P}} D(\alpha)$. 
\end{definition}

The following example illustrates the definition given above. 
\begin{example}\label{exp:hocolimt}
Consider the poset $\mathbf{P} = \{a,b,c\}$, with three elements
with $c \precneq a, c \precneq b$ as the only non-trivial ordering relation
(Hasse diagram shown below).

\[
		\xymatrix{
			a && b \\
			& c \ar[ru]\ar[lu]& 
		}
\]

Let $D:\mathbf{P} \rightarrow \Top$ be the functor, with
\begin{eqnarray*}
D(a) &=& \RR((X_1^2 + X_2^2 - 4 =0) \wedge (X_2 \geq 0)), \\
D(b) &=& \RR((X_1^2 + X_2^2 - 4 =0) \wedge (X_2 \leq  0)), \\
D(c) &=& D(a) \cap D(b) \\
&=& \{(-2,0), (2,0)\}.
\end{eqnarray*}

The homotopy colimit of the functor $D$ 
is then the quotient of the disjoint union of the spaces
$$\displaylines{
\Delta^0 \times D(a), \Delta^0 \times D(b), \Delta^0 \times D(c), \cr
\Delta^1 \times D(c), \Delta^1 \times D(c)
}
$$
corresponding to the chains $(a), (b), (c), (c \precneq a), (c \precneq b)$
by the equivalence relation defined in Eqn. \eqref{eqn:def:hocolimit}. The non-trivial identifications induced by the quotienting
are given by (following the notation introduced in Definition~\ref{def:hocolim})
$$
\displaylines{
((0,1), (-2,0))_{(c \precneq a)} \sim   ((1), (-2,0))_{(c)}, \cr
((0,1), (2,0))_{(c \precneq a)} \sim  ((1), (2,0))_{(c)}, \cr
((1,0), (-2,0))_{(c \precneq a)} \sim   ((1), (-2,0))_{(a)}, \cr
((1,0), (2,0))_{(c \precneq a)} \sim  ((1), (2,0))_{(a)}, \cr
((0,1), (-2,0))_{(c \precneq b)} \sim   ((1), (-2,0))_{(c)}, \cr
((0,1), (2,0))_{(c \precneq b)} \sim  ((1), (2,0))_{(c)}, \cr
((1,0), (-2,0))_{(c \precneq b)} \sim   ((1), (-2,0))_{(b)}, \cr
((1,0), (2,0))_{(c \precneq b)} \sim  ((1), (2,0))_{(b)}.
}
$$
The quotient space (as a semi-algebraic set) is shown below in Figure~\ref{fig:hocolimit}.
\end{example}

\begin{figure}[H]
	\begin{center}
		\includegraphics[scale=0.75]{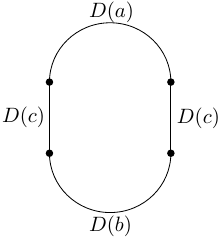}%
		\caption{\small Homotopy colimit of the functor $D$ in Example~\ref{exp:hocolimt}.}
		\label{fig:hocolimit}
	\end{center}
\end{figure}

\begin{proof}[Proof of Theorem~\ref{thm:main}]
The theorem will follow from the following two claims.

\begin{claim}
\label{thm:main:proof:claim:1}
The map $\pi_1^{D_{m,i}(\Phi)}:\hocolim(D_{m,i}(\Phi)) \rightarrow |\Delta(\mathbf{P}_{m,i}(\Phi))| $ is a homological $\ell$-equivalence (and so a homological $(m-1)$-equivalence).
\end{claim}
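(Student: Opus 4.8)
The plan is to invoke the version of the nerve/projection lemma for homotopy colimits that compares $\hocolim(D)$ with $|\Delta(\mathbf P)|$ through the fibers of $\pi_1^{D}$. Recall that for a functor $D:(\mathbf P,\preceq)\to\Top$ with all morphisms inclusions, the projection $\pi_1^{D}:\hocolim(D)\to|\Delta(\mathbf P)|$ has the property that over the open star of a vertex $\alpha$ the map is, up to homotopy, the projection $D(\alpha)\times(\text{contractible})\to(\text{contractible})$; more precisely, the preimage under $\pi_1^D$ of the (closed) simplex spanned by a chain $\alpha_0\prec\cdots\prec\alpha_p$ deformation retracts onto a copy of $D(\alpha_0)$. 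Hence $\pi_1^D$ is, locally over the order complex, a quasi-fibration whose fibers are the spaces $D(\alpha)$. The upshot I want is the standard one: if every functor value $D(\alpha)$ is homologically $\ell$-connected (i.e. $\widetilde{\HH}_i(D(\alpha))=0$ for $0\le i\le\ell$ and $D(\alpha)$ connected), then $\pi_1^D$ induces isomorphisms on $\HH_i$ for $0\le i\le\ell$, i.e. it is a homological $\ell$-equivalence.

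So the first and main step is to verify the connectivity hypothesis: I must show that $D_{m,i}(\Phi)(\alpha)$ is homologically $\ell$-connected for every $\alpha\in\mathbf P_{m,i}(\Phi)$. By Lemma~\ref{lem:thm:main:proof:claim:1} we have $D_{m,i}(\Phi)(\alpha)\searrow D'_{m,i}(\Phi)(\alpha)$, and by Lemma~\ref{lem:monotone} this forces $D_{m,i}(\Phi)(\alpha)$ to be semi-algebraically homotopy equivalent to $\E(D'_{m,i}(\Phi)(\alpha),\R_{i+m+1})$, hence to have the same homology as $D'_{m,i}(\Phi)(\alpha)$. Now unwinding the inductive definition of $D'_{m,i}$, for $\alpha=(I_0,\ldots,I_r,\{j\},\emptyset)$ one gets $D'_{m,i}(\Phi)(\alpha)=\RR(\mathcal C_{i+r-1,k}(\bigwedge_{j'\in I_{r-1}}\Phi_{r-1}(j'))(p),\overline{B_k(0,R)})$ for the appropriate index $p$ (the final ``$\{j\}$'' records which element of the cover was selected), which is homologically $\ell$-connected precisely by Property~\ref{property:thm:main}(2)(c). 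For the vertices $(\{j\},\emptyset)$ at the bottom of the recursion, $D'_{-1,i}(\Phi)((\{j\},\emptyset))=\RR(\Phi(j),\overline{B_k(0,R)})$, which is homologically $\ell$-connected by the standing hypothesis on the tuple $\Phi$. This is the step I expect to be the main obstacle, because it requires carefully threading the shifting indices $(m,i)\mapsto(m-\card(I)+1,i+1)$ through the recursion and checking that the ``last coordinate'' of any element of $\mathbf P_{m,i}(\Phi)$ indeed points to one of the $\ell$-connected cover elements supplied by $\mathcal C$, with the base case handled separately.

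Granting the connectivity of all fibers, the second step is purely formal: filter $|\Delta(\mathbf P_{m,i}(\Phi))|$ by its skeleta and run the Mayer--Vietoris / spectral-sequence comparison (or, equivalently, cite the generalized nerve lemma for homotopy colimits, e.g. in the form of the projection lemma of Segal/Bousfield--Kan), which shows that $\pi_1^{D_{m,i}(\Phi)}$ induces an isomorphism on $\HH_i(\,\cdot\,;\Z)$ for all $i$ once each $D_{m,i}(\Phi)(\alpha)$ is homologically $\infty$-connected --- and in our situation homologically $\ell$-connected suffices to get the conclusion in degrees $0\le i\le\ell$, hence in particular in degrees $0\le i\le m-1$ since $m\le\ell$. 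Finally I would remark that the diagrammatic statement follows by applying the same argument to each restriction $\Phi|_{J'}$ and using the compatibility of the posets $\mathbf P_{m,i}(\Phi|_{J'})\hookrightarrow\mathbf P_{m,i}(\Phi)$ together with the functoriality of $D_{m,i}$ and of $\hocolim$, so that all the equivalences $\pi_1$ assemble into a natural transformation of diagrams over $2^J$.
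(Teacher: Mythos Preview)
Your proposal is correct and follows essentially the same approach as the paper: both arguments reduce to showing that every functor value $D_{m,i}(\Phi)(\alpha)$ is homologically $\ell$-connected (via Lemma~\ref{lem:thm:main:proof:claim:1} to pass from $D$ to $D'$, and Property~\ref{property:thm:main}(2)(c) together with the standing hypothesis on $\Phi$ for $D'$), and then conclude using the connectivity of the fibers of $\pi_1^{D}$. The only cosmetic difference is that the paper observes directly that the point-fiber $(\pi_1^{D_{m,i}(\Phi)})^{-1}(t)$ over a point $t$ in the open simplex of the chain $\alpha_0\prec\cdots\prec\alpha_p$ is literally equal to $D_{m,i}(\Phi)(\alpha_0)$ and then invokes the Vietoris--Begle theorem, rather than appealing to the Segal/Bousfield--Kan projection lemma or a spectral-sequence comparison as you do.
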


\begin{claim}
\label{thm:main:proof:claim:2}
The map 
\[
F_{m,i}(\Phi) = r_{m,i}(\Phi) \circ \pi_2^{D_{m,i}(\Phi)}:\hocolim(D_{m,i}(\Phi)) 
\rightarrow 
\E(\RR(\Phi, \overline{B_k(0,R)})^J, \R_i)
\]
is a homological $(m-1)$-equivalence.
\end{claim}

We first deduce the proof of the theorem from these two claims.
The homological $(m-1)$-equivalence in \eqref{eqn:thm:main} now follows from 
Claims~\ref{thm:main:proof:claim:1}, \ref{thm:main:proof:claim:2} and Lemma~\ref{lem:thm:main:proof:claim:2}.

The diagrammatic homological $(m-1)$-equivalence in \eqref{eqn:thm:main:diagrammatic} follows
from the commutativity of the following diagrams of maps.

For each pair $J',J'' \subset J$, with $J' \subset J''$ we have the following commutative diagram,
where the vertical arrows are inclusions, and the slanted arrows induce isomorphisms in
the homology groups up to dimension $m-1$.

\[
\xymatrix{
& \hocolim(D_{m,i}(\Phi|_{J'})) \ar[ld]^{\pi_1^{D_{m,i}(\Phi|_{J'})}} \ar[rd]_{
F_{m,i}(\Phi|_{J'})
}\ar[dd]&\\
|\Delta(\mathbf{P}_{m,i}(\Phi|_{J'}))| \ar[dd]&& \E(\RR(\Phi,\overline{B_k(0,R)})^{J'},\R_i) \ar[dd]  \\
&\hocolim(D_{m,i}(\Phi|_{J''}))  \ar[ld]^{\pi_1^{D_{m,i}(\Phi|_{J''})}} \ar[rd]_{
F_{m,i}(\Phi|_{J''})
}
&\\
|\Delta(\mathbf{P}_{m,i}(\Phi|_{J''}))|  && \E(\RR(\Phi,\overline{B_k(0,R)})^{J''},\R_i) 
}.
\]

This implies that we have the following diagram of morphisms where both arrows are homological
$(m-1)$-equivalences:
\[
\xymatrix{
& (J' \mapsto \hocolim(D_{m,i}(\Phi|_{J'})))_{J' \in 2^J}\ar[ld] \ar[rd] &\\
(J' \mapsto |\Delta(\mathbf{P}_{m,i}(\Phi|_{J'}))|)_{J' \in 2^J} && {\hspace{-1in}\Simp^J(\RR(\Phi,\overline{B_k(0,R)}))}.
}
\]

This proves that the diagrams 
\[
(J' \mapsto |\Delta(\mathbf{P}_{m,i}(\Phi|_{J'}))|)_{J' \in 2^J}
\]
and  
\[
\Simp^J(\RR(\Phi,\overline{B_k(0,R)}))
\] 
are
homologically $(m-1)$-equivalent.

We now proceed to prove Claims \ref{thm:main:proof:claim:1} and \ref{thm:main:proof:claim:2}.

\begin{proof}[Proof of Claim~\ref{thm:main:proof:claim:1}]

Let $t \in |\Delta(\mathbf{P}_{m,i}(\Phi))|$. Then there exists a unique 
simplex $\sigma$ 
of the simplicial complex $\Delta(\mathbf{P}_{m,i}(\Phi))$ 
of the smallest possible dimension
such that $t \in |\sigma|$. 
Let $\alpha_0 \precneq \cdots \precneq \alpha_p$
be the chain in $\mathbf{P}_{m,i}(\Phi)$ corresponding to $\sigma$.
Then,
\[
(\pi^{D_{m,i}(\Phi)}_1)^{-1}(t) =  \{t\} \times D_{m,i}(\Phi)(\alpha_0).
\]

It is clear from its definition that $D'_{m,i}(\Phi)(\alpha)$ is homologically $\ell$-connected.
From Lemma~\ref{lem:thm:main:proof:claim:1} it follows that so is $D_{m,i}(\Phi)(\alpha)$.
It now follows from the 
homological version of the Vietoris-Begle theorem
(see Remark~\ref{rem:homology-vs-homotopy})
that $\pi^{D_{m,i}(\Phi)}_1$ is a homological
$\ell$-equivalence.
\end{proof}

\begin{proof}[Proof of Claim~\ref{thm:main:proof:claim:2}]
The claim will follow from the following claims.
Let 
\[
x \in \E(\RR(\Phi,\overline{B_k(0,R)})^{J},\R_i).
\]
We will prove that the
fiber
$(F_{m,i}(\Phi))^{-1}(x)$ 
is homologically $(m-1)$-connected
which will suffice to prove that 
$F_{m,i}(\Phi)$ 
is a homological $(m-1)$-equivalence by the
homological version of 
Vietoris-Begle theorem (see Remark~\ref{rem:homology-vs-homotopy}).

In order to study the fiber 
$(F_{m,i}(\Phi))^{-1}(x)$ 
we define  for each $I \subset_{\leq m+2} J$
the following posets of $\mathbf{P}_{m,i}(\Phi)$.

We define 
\begin{eqnarray*}
 \mathbf{P}_I(x) &=& \{(I,\alpha) \in \{I\} \times \mathbf{P}_{m-\card(I)+1,i+1}(\Phi_{m,i,I,J}) \mid \\
&& x \in 
\lim_{\bar\eps} D_{m-\card(I)+1,i+1}(\Phi_{m,i,I,J})(\alpha)\} 
\subset \mathbf{P}_{m,i}(\Phi),  
\end{eqnarray*}
and 
\begin{equation*}
\mathbf{Q}_I(x) =  \bigcup_{I \subset I' \subset_{\leq m+2} J} \mathbf{P}_{I'}(x).
\end{equation*}

The motivation behind the definition of the posets $\mathbf{P}_I(x), \mathbf{Q}_I(x)$ is as follows. First observe that
\begin{equation}
\label{eqn:Q:1}
(F_{m,i}(\Phi))^{-1}(x)
= \left|\bigcup_{j \in J} \Delta(\mathbf{Q}_{\{j\}}(x))\right|,
\end{equation}
and
\begin{equation}
\label{eqn:Q:2}
  \bigcap_{j \in I} \mathbf{Q}_{\{j\}}(x) = \mathbf{Q}_I(x).
\end{equation}

Our strategy for proving the homological $(m-1)$-connectedness of 
$(F_{m,i}(\Phi))^{-1}(x)$
is to use the closed covering 
provided by \eqref{eqn:Q:1} and then use the cohomological Mayer-Vietoris 
spectral sequence to reduce the problem to studying the connectivity of 
the various $\left|\Delta(\mathbf{Q}_I(x))\right|$ using \eqref{eqn:Q:2}. 
Finally, 
we prove (see Claim~\ref{thm:main:proof:claim:5}) that for each $I$, $|\Delta(\mathbf{P}_I(x))|$ is homologically
equivalent to $|\Delta(\mathbf{Q}_I(x))|$. 
This last fact allows us to use
induction on the cardinality of $I$ to prove the required connectivity 
statement for the corresponding $|\Delta(\mathbf{Q}_I(x))|$.

We now return to the proof of Claim~\ref{thm:main:proof:claim:2}.
Since, for each $I'$, with $I \subset I' \subset_{\leq m+2} J$, 
\[
\mathbf{P}_{m -\card(I') +1,i+1}(\Phi_{m,i,I',J}) \subset \mathbf{P}_{m-\card(I) +1,i+1}(\Phi_{m,i,I,J}),
\] 
there is an injective map, 
\[
\mathbf{P}_{I'}(x) \rightarrow \mathbf{P}_I(x), (I',\alpha) \mapsto (I,\alpha).
\] 

Thus there is a map
\[
\theta_I(x): \mathbf{Q}_I(x) \rightarrow \mathbf{P}_I(x),
\]
defined by 
\[
\theta_I(x)((I',\alpha)) = (I,\alpha),
\]
for each $(I',\alpha) \in \mathbf{Q}_I(x),$ where $I \subset I' \subset_{\leq m+2} J$.

It is obvious from the above definition  and 
the definition of the partial order in $\mathbf{P}_{m,i}(\Phi)$, that
the map $\theta_I(x)$ is a map of posets (i.e. a map respecting the partial orders of the two posets).

\begin{claim}
\label{thm:main:proof:claim:3}
The map $\theta_I(x)$ induces a simplicial map $\Theta_I(x): \Delta(\mathbf{Q}_I(x))\rightarrow \Delta(\mathbf{P}_I(x))$. 
Moreover, the corresponding map 
$|\Theta_I(x)| : |\Delta(\mathbf{Q}_I(x))| \rightarrow |\Delta(\mathbf{P}_I(x))|$, between the geometric realizations, is a homological equivalence.
\end{claim}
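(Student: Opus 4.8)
The plan is to realize $\theta_I(x)$ as an idempotent, order‑preserving, extensive self‑map of $\mathbf{Q}_I(x)$ whose image is $\mathbf{P}_I(x)$ — i.e.\ a ``closure operator'' on the poset — and then quote the standard comparison principle for order complexes. First I would check that $\theta_I(x)$ is well defined. Given $(I',\alpha)\in\mathbf{Q}_I(x)$ with $I\subset I'$, the decomposition $\mathbf{Q}_I(x)=\bigcup_{I\subset I''\subset_{\leq m+2}J}\mathbf{P}_{I''}(x)$ together with the fact that every element of $\mathbf{P}_{I''}(x)$ has first coordinate $I''$ forces $(I',\alpha)\in\mathbf{P}_{I'}(x)$, that is $x\in D_{m-\card(I')+1,i+1}(\Phi_{m,i,I',J})(\alpha)$. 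Since $I\subset I'$ and $\alpha$ is the common tail, \eqref{eqn:order-alternate} gives $(I',\alpha)\preceq (I,\alpha)$ in $\mathbf{P}_{m,i}(\Phi)$, so by functoriality of $D_{m,i}(\Phi)$ we have $D_{m,i}(\Phi)((I',\alpha))\subset D_{m,i}(\Phi)((I,\alpha))$; unwinding the definition of $D_{m,i}$ (and the relevant extension of scalars) this yields $x\in D_{m-\card(I)+1,i+1}(\Phi_{m,i,I,J})(\alpha)$, i.e.\ $(I,\alpha)\in\mathbf{P}_I(x)$. Here one also uses properties (a)--(c) of $J_{m,i,I,\Phi}$ to see that $\alpha$ simultaneously indexes the fibre posets for $I'$ and for $I$, and that the two formulas it names agree.

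Next I would verify that $\theta_I(x)$ is order‑preserving, hence induces a simplicial map $\Theta_I(x)\colon\Delta(\mathbf{Q}_I(x))\to\Delta(\mathbf{P}_I(x))$. If $(I'_1,\alpha_1)\preceq(I'_2,\alpha_2)$ in $\mathbf{Q}_I(x)$, then by \eqref{eqn:order-alternate} the tuple $(I'_1,\alpha_1)$ has at least as many coordinates as $(I'_2,\alpha_2)$ and its coordinate index‑sets contain the corresponding ones of $(I'_2,\alpha_2)$ (in particular $I'_2\subset I'_1$); replacing the zeroth coordinates $I'_1,I'_2$ both by the common set $I$ preserves all these containments, so $(I,\alpha_1)\preceq(I,\alpha_2)$, as required. (Alternatively this is immediate from Lemma~\ref{lem:property-order} and the subposet relations $\mathbf{P}_{m-\card(I')+1,i+1}(\Phi_{m,i,I',J})\subset\mathbf{P}_{m-\card(I)+1,i+1}(\Phi_{m,i,I,J})$.)

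For the homotopy equivalence, let $\iota\colon\mathbf{P}_I(x)\hookrightarrow\mathbf{Q}_I(x)$ be the inclusion (legitimate, since $\mathbf{P}_I(x)$ is the $I''=I$ term of the union above). On the one hand $\theta_I(x)\circ\iota=\operatorname{id}_{\mathbf{P}_I(x)}$ literally, so $|\Theta_I(x)|\circ|\iota|=\operatorname{id}$. On the other hand, for each $y=(I',\alpha)\in\mathbf{Q}_I(x)$ we have $\iota(\theta_I(x)(y))=(I,\alpha)$ and, as already noted, $y=(I',\alpha)\preceq(I,\alpha)$; thus $\operatorname{id}_{\mathbf{Q}_I(x)}$ and $\iota\circ\theta_I(x)$ are comparable poset maps $\mathbf{Q}_I(x)\to\mathbf{Q}_I(x)$ (pointwise, $\operatorname{id}\preceq\iota\circ\theta_I(x)$). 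By the comparison lemma for order complexes — comparable poset maps induce homotopic maps on geometric realizations, see \cite{Bjorner} — we get $|\iota|\circ|\Theta_I(x)|\simeq\operatorname{id}_{|\Delta(\mathbf{Q}_I(x))|}$. Hence $|\Theta_I(x)|$ and $|\iota|$ are mutually inverse homotopy equivalences, which proves the claim.

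I expect the only genuine work to be the well‑definedness step: one must carefully unwind the recursive definitions of $J_{m,i,I,\Phi}$ and $\Phi_{m,i,I,J}$ (invoking properties (a)--(c)) to see that the same $\alpha$ labels elements of both $\mathbf{P}_{I'}(x)$ and $\mathbf{P}_I(x)$, and one must keep track of the tower of real closed field extensions $\R_{i+m+1}\supset\cdots$ so that the membership statement ``$x\in D(\alpha)$'' has literal meaning in a common field. Everything downstream — order‑preservation, idempotence, extensivity, and the invocation of the comparison lemma — is the routine formal argument that the image of a closure operator on a poset is a deformation retract.
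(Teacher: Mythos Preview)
Your argument is correct and is actually cleaner than the paper's. The paper establishes that $\Theta_I(x)$ is simplicial by explicitly tracing a chain $(I_0,\alpha_0)\prec\cdots\prec(I_p,\alpha_p)$ in $\mathbf{Q}_I(x)$, partitioning it into blocks where the first coordinate is constant, and invoking Lemma~\ref{lem:property-order} repeatedly to see that the image is again a chain; you bypass this by observing once and for all that replacing the zeroth coordinate by $I$ in both sides of the relation \eqref{eqn:order-alternate} preserves all the required containments, so $\theta_I(x)$ is order-preserving and hence automatically takes chains to chains. For the homotopy equivalence, the paper shows the point-fibres of $|\Theta_I(x)|$ are contractible (each $\Theta_I(x)^{-1}((I,\alpha))$ is a cone with apex $(I,\alpha)$) and then appeals to the Vietoris--Begle theorem; you instead exhibit $\theta_I(x)$ as a closure operator ($\theta_I(x)\circ\iota=\mathrm{id}$ and $\mathrm{id}\preceq\iota\circ\theta_I(x)$ pointwise) and quote the order-homotopy comparison lemma. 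Both routes are standard; yours avoids the mild overkill of Vietoris--Begle and makes the retraction explicit. Your careful verification of well-definedness (that $(I',\alpha)\in\mathbf{P}_{I'}(x)$ forces $(I,\alpha)\in\mathbf{P}_I(x)$ via the inclusion $D_{m,i}(\Phi)((I',\alpha))\subset D_{m,i}(\Phi)((I,\alpha))$) also fills in a step the paper asserts rather than proves.
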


\begin{proof}
Since the map $\theta_i(x)$ is a poset map, it carries a chain of $\mathbf{Q}_I(x)$ to a chain of $\mathbf{P}_I(x)$.
This implies that $\theta_I(x)$ induces a simplicial map $\Theta_I(x): \Delta(\mathbf{Q}_I(x))\rightarrow \Delta(\mathbf{P}_I(x))$.

We now prove the second half of the claim. 
We are going to use the poset fiber theorem proved in
\cite[Lemma 3.2]{Sturmfels-Ziegler} (also \cite[Corollary 3.4]{Bjorner-Wachs}).

For $n \geq 0$, we denote by $\mathcal{B}_n$ the complete Boolean lattice
on a set with $n$ elements. It is a well known fact (see for example \cite{Wachs}) that
$|\Delta(\mathcal{B}_n)|$ is homeomorphic to $[0,1]^n$, and is thus contractible.

Let $(I,\alpha) \in \mathbf{P}_I(x)$, and $I' \subset_{\leq m+2} J$ be
the unique maximal subset of $J$ such that $(I', \alpha) \in \mathbf{P}_{I'}(x)$ (see the schematic diagram in Figure~\ref{fig:poset-fiber} which depicts subposet of the poset shown in Figure~\ref{fig:poset}).

\begin{figure}[h!]
	\begin{center}
		\includegraphics[scale=0.75]{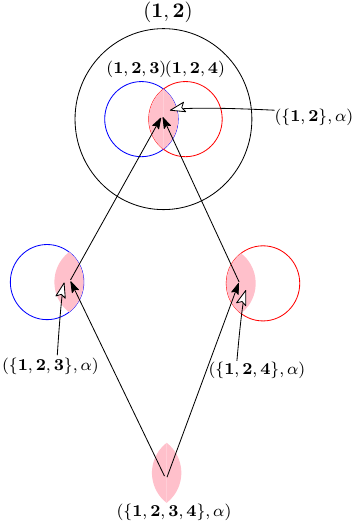}%
	\caption{\small $\theta_I(x)^{-1}((I,\alpha))$ with $I = \{1,2\}$, and $I' = \{1,2,3, 4\}$}
	\label{fig:poset-fiber}
	\end{center}
\end{figure}
Then, 
\[
\theta_I(x)^{-1}((I,\alpha)) = \{ (I'',\alpha) \;\mid\; I \subset I'' \subset I' \}.
\]

Hence, the poset 
$\theta_I(x)^{-1}((I,\alpha))$ is isomorphic as a poset to
$\mathcal{B}_{\card(I') - \card(I)}$.
Thus, $|\Delta(\theta_I(x)^{-1}((I,\alpha)))|$ is contractible.

Moreover, for each $(I'',\alpha) \in \theta_I(x)^{-1}((I,\alpha))$,
\[
\theta_I(x)^{-1}((I,\alpha))_{\succ (I'',\alpha)}
=
\{ (I''',\alpha) \;\mid\; I \subset I''' \subset I'' \},
\]
and hence 
$\theta_I(x)^{-1}((I,\alpha))_{\succ (I'',\alpha)}$
is isomorphic to  $\mathcal{B}_{\card(I'') - \card(I)}$.
This proves that
$|\Delta(\theta_I(x)^{-1}((I,\alpha))_{\succ (I'',\alpha)})|$
is contractible 
for each $(I'',\alpha) \in \theta_I(x)^{-1}((I,\alpha))$.

It now follows from the poset fiber theorem \cite[Lemma 3.2]{Sturmfels-Ziegler} (also \cite[Corollary 3.4]{Bjorner-Wachs}) that
the poset map $\theta_I(x)$ induces a homological equivalence 
$|\Theta_I(x)| : |\Delta(\mathbf{Q}_I(x))| \rightarrow |\Delta(\mathbf{P}_I(x))|$.
\end{proof}

Observe that Claim~\ref{thm:main:proof:claim:3} implies in  particular that if $\card(I) = 1$, then $|\mathbf{Q}_I(x)|$ is contractible if non-empty.

\begin{claim}
\label{thm:main:proof:claim:4}
For 
$
x \in \E(\RR(\Phi,\overline{B_k(0,R)})^{J''},\R_i) = \lim_{\bar\eps}\bigcup_{\alpha \in \mathbf{P}_{m,i}(\Phi)} D_{m,i}(\Phi)(\alpha),
$
\begin{eqnarray}
\label{eqn:thm:main:proof:claim:4}
\HH^j((F_{m,i}(\Phi))^{-1}(x))
&\cong& \Z, \mbox{ for } j =0,\\
\nonumber
&=& 0, \mbox{ for } 0 < j \leq  m.
\end{eqnarray}
\end{claim}

\begin{proof}
The proof is by induction on $m$ starting with the case $m=0$.
The case $m=-1$ is trivial.

\noindent
Base case ($m=0$). We need to show that for 
\[
x \in \E(\RR(\Phi,\overline{B_k(0,R)})^{J},\R_i) = \lim_{\bar\eps}\bigcup_{\alpha \in \mathbf{P}_{0,i}(\Phi)} D_{0,i}(\Phi)(\alpha),
\]
$(F_{0,i}(\Phi))^{-1}(x)$ is connected.

First note that
\[
F_{0,i}(\Phi) = r_{0,i}(\Phi) \circ \pi_2^{D_{0,i}(\Phi)},
\]
and $r_{0,i}(\Phi)$ is a semi-algebraic deformation retraction.
Hence, $r_{0,i}(\Phi)^{-1}(x)$ is closed and semi-algebraically connected
(in fact contractible).

Let $J(x) = \{j \in J \mid D_{0,i}(\Phi)((\{j\},\emptyset)) \cap r_{0,i}(\Phi)^{-1}(x) \neq \emptyset \}$.
Since, the sets $D_{0,i}(\Phi)((\{j\},\emptyset)), j \in J(x)$ is a covering 
of the closed and semi-algebraically connected set 
$r_{0,i}(\Phi)^{-1}(x)$ by closed sets, it follows that the union
\[
\bigcup_{j \in J(x)} D_{0,i}(\Phi)((\{j\},\emptyset))
\]
is semi-algebraically connected as well. It follows that given any $j,j' \in J(x)$, there exists
a sequence $j=j_0,j_1,\ldots,j_N = j'$ such that
for each $h, 0 \leq h \leq N-1$, 
\[
D_{0,i}(\Phi)((\{j_h\},\emptyset)) \cap D_{0,i}(\Phi)((\{j_{h+1}\},\emptyset)) \cap r_{0,i}(\Phi)^{-1}(x) \neq \emptyset.
\]

So there exists for each $h, 0 \leq h \leq N-1$
$j''  = (\{j_h,j_{h+1}\},p) \in J_{0,i,\{j_h,j_{h+1}\},\Phi}$
such that 
\[
\RR(\Phi_{\{j_h,j_{h+1}\}}(p))  \cap r_{m,i}(\Phi)^{-1}(x) \neq \emptyset.
\]

So there exists 
$\alpha  = (\{j''\},\emptyset)   \in \mathbf{P}_{-1,i+1}(\Phi_{\{j_h,_{h+1}\}})$, such that
\[
D_{0,i}(\Phi)((\{j_h,j_{h+1}\},\alpha)) \cap r_{0,i}(\Phi)^{-1}(x) \neq \emptyset,
\]
and
so 
\[
(\{j_h,j_{h+1}\},\alpha) \in 
(F_{0,i}(\Phi))^{-1}(x).
\]
Moreover,
\[
(\{j_h,j_{h+1}\},\alpha) \precneq (\{j_h\},\emptyset), (\{j_{h+1}\},\emptyset)
\] 
(using Lemma~\ref{lem:property-order}).
This implies that 
$(\{j_h\},\emptyset), (\{j_{h+1}\},\emptyset)$, and thus every pair of the form
$(\{j\},\emptyset), (\{j'\},\emptyset)$ in $(F_{0,i}(\Phi))^{-1}(x)$ belongs to the same 
connected component of $(F_{0,i}(\Phi))^{-1}(x)$. Since,  for every element of the form 
$(\{j_h,j_{h+1}\},\alpha) \in (F_{0,i}(\Phi))^{-1}(x)$ we have 
\[
(\{j_h,j_{h+1}\},\alpha) \precneq (\{j_h\},\emptyset), (\{j_{h+1}\},\emptyset)
\in (F_{0,i}(\Phi))^{-1}(x),
\]
$(\{j_h,j_{h+1}\},\alpha)$ belong to the same connected component of $(F_{0,i}(\Phi))^{-1}(x)$ as 
\[
(\{j_h\},\emptyset), (\{j_{h+1}\},\emptyset)
\]
as well.
Together, these facts imply that $(F_{0,i}(\Phi))^{-1}(x)$ is connected. This proves the claim in the base case.\\

\noindent
Inductive step. Suppose we have proved the theorem for all $m', 0 \leq m' < m$, $i \geq 0$, all finite $J'$, and 
$\Phi' \in (\mathcal{F}_{k,\R_i})^{J'}$.
We now prove it for $m,i,J,\Phi$. 
\[
x \in \E(\RR(\Phi,\overline{B_k(0,R)})^{J},R_i) = \lim_{\bar\eps}\bigcup_{\alpha \in \mathbf{P}_{m,i}(\Phi)} D_{m,i}(\Phi)(\alpha),
\]

Recall from \eqref{eqn:Q:1} that
\[
(F_{m,i}(\Phi))^{-1}(x) = \left|\bigcup_{j \in J} \Delta(\mathbf{Q}_{\{j\}}(x))\right|.
\]
Let 
\[
J' = \{j \in J\;\mid \;  \mathbf{Q}_{\{j\}}(x) \neq \emptyset \}.
\]
So
\[
(F_{m,i}(\Phi))^{-1}(x) = |\bigcup_{j \in J'} \Delta(\mathbf{Q}_{\{j\}}(x))|.
\]

It follows from the Mayer-Vietoris exact sequence in cohomology for
closed subspaces (see for example, \cite[page 148]{Iversen}) that
there exists a spectral sequence
\[
E_r^{p,q} \Rightarrow \HH^{p+q}\left(\left|\bigcup_{j \in J'} \Delta(\mathbf{Q}_{\{j\}}(x))\right|\right)
\]
whose $E_1$ term is given by
\[
E_1^{p,q} = \bigoplus_{I \subset J', \card(I) = p+1}  \HH^q\left(\left|\bigcap_{j \in I} \Delta(\mathbf{Q}_{\{j\}}(x))\right|\right).
\]

Notice that 
\[
\bigcap_{j \in I} \mathbf{Q}_{\{j\}}(x) = \mathbf{Q}_I(x),
\]
and it follows from Claim~\ref{thm:main:proof:claim:3} that
$
|\Delta(\mathbf{Q}_I(x))| 
$ 
is homotopy equivalent to
$
|\Delta(\mathbf{P}_I(x))|
$.

So we get,
\[
E_1^{p,q} = \bigoplus_{I \subset J', \card(I) = p+1}  \HH^q(|\Delta(\mathbf{P}_I(x))|).
\]

Now for $I$, with $\card(I) > 1$,  we can apply the induction hypothesis to deduce that

\begin{eqnarray*}
\HH^j(|\Delta(\mathbf{P}_I(x))|) &\cong& \Z, \mbox{ for } j =0, \\
&=& 0, \mbox{ for } 0 < j \leq  m -\card(I)+1.
\end{eqnarray*}

We can deduce from this that

\begin{eqnarray*}
E_1^{p,0} &\cong& \bigoplus_{I \subset J', \card(I) = p+1} \Z, \\
E_1^{p,q} & \cong & 0,  \mbox{ for } 0 < q \leq m - p. 
\end{eqnarray*}

It follows that 
\begin{eqnarray*}
E_2^{0,0} &\cong& \Z, \\
E_2^{p,0} &\cong& 0,  p > 0   \\
E_2^{p,q} & \cong & 0,  \mbox{ for } p \geq 0, 0 < q \leq m-p. 
\end{eqnarray*}
\end{proof}

Note that it follows from Claim~\ref{thm:main:proof:claim:5} and the Mayer-Vietoris spectral sequence argument used in its proof that
ror any 
\[
J' \subset \{j \in J\;\mid \;  \mathbf{Q}_{\{j\}}(x) \neq \emptyset \},
\]
\begin{eqnarray}
\label{eqn:claim:4+}
\HH^j\left(\left|\bigcup_{j \in J'} \Delta(\mathbf{Q}_{\{j\}}(x))\right|\right)
&\cong& \Z, \mbox{ for } j =0,\\
\nonumber
&=& 0, \mbox{ for } 0 < j \leq  m.
\end{eqnarray}

\begin{claim}
\label{thm:main:proof:claim:5}
For
\[
x \in \E(\RR(\Phi,\overline{B_k(0,R)})^{J},\R_i) = \lim_{\bar\eps}\bigcup_{\alpha \in \mathbf{P}_{m,i}(\Phi)} D_{m,i}(\Phi)(\alpha),
\]
$(F_{m,i}(\Phi))^{-1}(x)$ 
is  homologically $(m-1)$-connected.
\end{claim}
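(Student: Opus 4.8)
The plan is to compute the integral cohomology of the fibre $Y := (\pi_2^{D_{m,i}(\Phi)})^{-1}(x)$ from the Mayer--Vietoris spectral sequence set up in the proof of Claim~\ref{thm:main:proof:claim:4}, and then convert this to homology via the universal coefficient theorem. Recall from that proof that $Y = |\bigcup_{j \in J}\Delta(\mathbf{Q}_{\{j\}}(x))|$; since each $\mathbf{Q}_{\{j\}}(x)$ is a subposet of the finite poset $\mathbf{P}_{m,i}(\Phi)$, the space $Y$ is the geometric realization of a finite simplicial complex, and the spectral sequence in question is a first-quadrant cohomological spectral sequence converging, in total degree $p+q$, to $\HH^\bullet(Y;\Z)$, with $E_2$-page (computed just above)
\[
E_2^{0,0} \cong \Z,\qquad E_2^{p,0} \cong 0 \ (p > 0),\qquad E_2^{p,q} \cong 0\ \ (p \geq 0,\ 0 < q \leq m-p).
\]

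First I would check that $\HH^\bullet(Y;\Z)$ is forced in the relevant range. Fix $n$ with $0 < n \leq m$; for every $(p,q)$ on the diagonal $p+q=n$ one has $E_2^{p,q}=0$ --- indeed if $q=0$ then $p=n>0$, and if $q>0$ then $q=n-p\leq m-p$. As each $E_{r+1}^{p,q}$ is a subquotient of $E_r^{p,q}$, these vanishings persist to $E_\infty$, so the filtration of $\HH^n(Y;\Z)$ has trivial associated graded and hence $\HH^n(Y;\Z)=0$ for $0<n\leq m$. On the diagonal $n=0$ only $E_\infty^{0,0}\cong\Z$ survives, so $\HH^0(Y;\Z)\cong\Z$ and $Y$ is non-empty and connected.

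Then I would pass to homology. Because $Y$ is a finite complex, its integral homology groups are finitely generated, so the universal coefficient theorem yields $\HH^j(Y;\Z)\cong\Hom(\HH_j(Y),\Z)\oplus\mathrm{Ext}^1(\HH_{j-1}(Y),\Z)$, i.e. $\HH^j(Y;\Z)=0$ exactly when the free part of $\HH_j(Y)$ and the torsion part of $\HH_{j-1}(Y)$ both vanish. Feeding in $\HH^j(Y;\Z)=0$ for $1\leq j\leq m$: for each $j$ with $1\leq j\leq m-1$, the vanishing of $\HH^j(Y;\Z)$ kills the free part of $\HH_j(Y)$ and the vanishing of $\HH^{j+1}(Y;\Z)$ (available since $j+1\leq m$) kills the torsion part, so $\HH_j(Y;\Z)=0$. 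Together with connectivity this is precisely the assertion that $Y$ is homologically $(m-1)$-connected; the case $m\leq 0$ is immediate, since then the homological range $0<j\leq m-1$ is empty and non-emptiness of $Y$ follows from $x$ lying in some $D_{m,i}(\Phi)(\alpha)$.

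The one point requiring care --- and the reason the conclusion stops at $(m-1)$-connectedness rather than $m$-connectedness --- is this torsion bookkeeping: one genuinely needs the cohomology to vanish one degree beyond the desired homological range, and the degree-$m$ input is exactly what is consumed to rule out $(m-1)$-dimensional torsion, while finiteness of $Y$ is what makes the universal coefficient theorem usable in this form. Everything else is an immediate read-off from data already assembled, so I do not expect any serious obstacle here.
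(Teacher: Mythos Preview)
Your argument is correct and follows essentially the same route as the paper: deduce the integral cohomology vanishing in degrees $1,\ldots,m$ (which is the content of Claim~\ref{thm:main:proof:claim:4} and the $E_2$-page computation immediately following its proof), and then apply a universal coefficient theorem to conclude that $\HH_j(Y;\Z)=0$ for $1\leq j\leq m-1$. The only cosmetic difference is that the paper invokes the dual form of the universal coefficient theorem from Spanier, giving the short exact sequence $0\to\mathrm{Ext}(\HH^{q+1}(Y),\Z)\to\HH_q(Y)\to\Hom(\HH^q(Y),\Z)\to 0$ directly, whereas you use the standard cohomological UCT together with finite generation of homology; both arguments consume exactly the same cohomological input and yield the same conclusion for the same reason.
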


\begin{proof}
Let 
$X=F_{m,i}(\Phi)^{-1}(x)$.
It follows from \cite[Theorem 12, page 248]{Spanier} that there exists a short exact sequence:
\[
0 \rightarrow \mathrm{Ext}(\HH^{q+1}(X),\Z) \rightarrow \HH_q(X) \rightarrow \Hom (\HH^{q}(X),\Z) \rightarrow 0.
\]

Thus, for each 
$q>0$
\[
\HH^{q+1}((F_{m,i}(\Phi))^{-1}(x)) =  \HH^{q}((F_{m,i}(\Phi))^{-1}(x)) = 0
\]
 implies that
$\HH_{q}((F_{m,i}(\Phi))^{-1}(x)) = 0$.

The claim 
 now follows from
\eqref{eqn:thm:main:proof:claim:4}.
\end{proof}

Claim~\ref{thm:main:proof:claim:2} now follows from Claim~\ref{thm:main:proof:claim:5} 
and the 
homological version of the 
Vietoris-Begle theorem 
(see Remark~\ref{rem:homology-vs-homotopy}).
\end{proof}
This completes the proof of Theorem~\ref{thm:main}.
\end{proof}

\begin{proof}[Proof of Theorem~\ref{thm:main'}]
Since the proof closely mirrors that of the proof of Theorem~\ref{thm:main} we only point out the places where it differs. 
For each $\alpha \in \mathbf{P}_{m,i}(\Phi)$,
we replace the infinitesimals $\bar{\eps}_0,\ldots,\bar{\eps}_m$, by sequences of appropriately small enough positive elements
$\bar{t}_0,\ldots,\bar{t}_m$ of  $ \mathbb{R}$, in the formula
defining the set $D_{m,i}(\Phi)(\alpha)$, and denote the set 
defined by the new formula (which are semi-algebraic subset of $\mathbb{R}^k$) by
$\widetilde{D}_{m,i}(\Phi)(\alpha)$.
Similarly, we will denote the retraction
\[
\bigcup_{\alpha \in \mathbf{P}_{m,i}(\Phi)} \widetilde{D}_{m,i}(\Phi)(\alpha) \rightarrow \RR(\Phi, \overline{B_k(0,R)})^J
\]
by
$\widetilde{r}_{m,i}(\Phi)$,
and the composition

\[
\widetilde{r}_{m,i}(\Phi) \circ \pi_2^{\widetilde{D}_{m,i}(\Phi)}:\hocolim(\widetilde{D}_{m,i}(\Phi)) 
\rightarrow 
\RR(\Phi, \overline{B_k(0,R)})^J
\]
by 
$\widetilde{F}_{m,i}(\Phi)$.

Claims~\ref{thm:main:proof:claim:1} and \ref{thm:main:proof:claim:2} are replaced by:

\begin{manualclaim}{\ref*{thm:main:proof:claim:1}$^\prime$}
\label{thm:main:proof:claim:1'}
The map $\pi_1^{\widetilde{D}_{m,i}(\Phi)}:\hocolim(\widetilde{D}_{m,i}(\Phi)) \rightarrow |\Delta(\mathbf{P}_{m,i}(\Phi))| $ is an $\ell$-equivalence (and so an $(m-1)$-equivalence).
\end{manualclaim}

\begin{manualclaim}{\ref*{thm:main:proof:claim:2}$^\prime$} 
\label{thm:main:proof:claim:2'}
The map 
\[
\widetilde{F}_{m,i}(\Phi) = \widetilde{r}_{m,i}(\Phi) \circ \pi_2^{\widetilde{D}_{m,i}(\Phi)}:\hocolim(\widetilde{D}_{m,i}(\Phi)) 
\rightarrow 
\RR(\Phi, \overline{B_k(0,R)})^J
\]
is an $(m-1)$-equivalence.
\end{manualclaim}

The proof of Claim~\ref{thm:main:proof:claim:1'}
is the same as the proof of 
Claim~\ref{thm:main:proof:claim:1} replacing homologically $\ell$-connected by just $\ell$-connected, and 
using the homotopy version of the Vietoris-Begle theorem (see Remark~\ref{rem:homology-vs-homotopy}).

For the proof of Claim~\ref{thm:main:proof:claim:2'}
we need an extra argument to deduce
the $(m-1)$-connectivity of the fibers of the map 
$\widetilde{F}_{m,i}(\Phi)$ 
from the fact that they
are homologically $(m-1)$-connected which is already proved in Claim~\ref{thm:main:proof:claim:5}. In order to do this we apply Hurewicz's isomorphism theorem which requires simple connectivity of the fibers 
$(\widetilde{F}_{m,i}(\Phi))^{-1}(x)$, 
which is the content of the following claim.

\begin{claim}
\label{thm:main:proof:claim:6}
For $x \in  \RR(\Phi, \overline{B_k(0,R)})^J$,
and $m \geq 1$,
$(\widetilde{F}_{m,i}(\Phi))^{-1}(x)$ is simply connected. In other words,
$(\widetilde{F}_{m,i}(\Phi))^{-1}(x)$ is connected, and
\[
\pi_1((\widetilde{F}_{m,i}(\Phi))^{-1}(x)) \cong 0.
\]
\end{claim}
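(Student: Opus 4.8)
The statement to prove is Claim~\ref{thm:main:proof:claim:6}: for $x$ in the union $\bigcup_{\alpha} \widetilde{D}_{m,i}(\Phi)(\alpha)$ and $m \geq 1$, the fiber $(\pi_2^{\widetilde{D}_{m,i}(\Phi)})^{-1}(x)$ is simply connected. Recall from the proof of Claim~\ref{thm:main:proof:claim:2} that this fiber is identified with $|\bigcup_{j \in J'} \Delta(\mathbf{Q}_{\{j\}}(x))|$, where $J' = \{j \in J \mid \mathbf{Q}_{\{j\}}(x) \neq \emptyset\}$, and that each $|\Delta(\mathbf{Q}_{\{j\}}(x))|$ is contractible (this follows from Claim~\ref{thm:main:proof:claim:3} applied with $\card(I)=1$, since $|\Delta(\mathbf{P}_{\{j\}}(x))|$ is then a single point). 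More generally, for any $I \subset J'$, the intersection $\bigcap_{j\in I}\mathbf{Q}_{\{j\}}(x) = \mathbf{Q}_I(x)$, and $|\Delta(\mathbf{Q}_I(x))|$ is homotopy equivalent to $|\Delta(\mathbf{P}_I(x))|$. So the fiber is the union of a family of contractible subcomplexes whose nonempty intersections have the homotopy type of order complexes of the posets $\mathbf{P}_I(x)$.

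The approach is to use van Kampen's theorem, iterated over the cover $(|\Delta(\mathbf{Q}_{\{j\}}(x))|)_{j \in J'}$. Since each piece is contractible (hence simply connected), van Kampen tells us that $\pi_1$ of the fiber is generated by loops coming from the intersections — more precisely, the fundamental group of the union is a quotient of the free product of the $\pi_1$'s of the pieces (all trivial) amalgamated along the $\pi_1$'s of the pairwise intersections, with relations coming from triple intersections. Since the pieces have trivial $\pi_1$, the fundamental group of the union is generated by the images of $\pi_1$ of the pairwise intersections $|\Delta(\mathbf{Q}_{\{j,j'\}}(x))| \simeq |\Delta(\mathbf{P}_{\{j,j'\}}(x))|$. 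Now $\mathbf{P}_{\{j,j'\}}(x)$ is (identified with a subposet of) $\mathbf{P}_{m-1,i+1}(\Phi_{m,i,\{j,j'\},J})$, and since $m \geq 1$ we have $m - 1 \geq 0$; by the inductive hypothesis used in Claim~\ref{thm:main:proof:claim:4} (now invoked for the $\mathbb{R}$-version, i.e. the inductive statement of Claim~\ref{thm:main:proof:claim:6} together with Claim~\ref{thm:main:proof:claim:4}), $|\Delta(\mathbf{P}_{\{j,j'\}}(x))|$ is connected when $m-1 \geq 0$; and it is simply connected when $m - 1 \geq 1$, i.e. $m \geq 2$.

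The case $m = 1$ needs separate, more careful treatment: here the pairwise intersections $|\Delta(\mathbf{P}_{\{j,j'\}}(x))|$ are merely connected (being order complexes of the posets $\mathbf{P}_{0,i+1}(\cdots)$, which are connected by the base case of Claim~\ref{thm:main:proof:claim:4}), not a priori simply connected, so loops in them could contribute to $\pi_1$ of the fiber. To kill these, I would push down one further level: a loop in $|\Delta(\mathbf{P}_{\{j,j'\}}(x))|$ can be subdivided so that each edge lies in some $|\Delta(\mathbf{Q}_{\{j''\}}(x))|$ with $\{j,j'\} \subset $ the relevant index set — using that $\mathbf{P}_{0,i+1}$ is itself built from a cover, and that the order complex of a poset built as $\mathbf{P}_{0,\cdot}$ is simply connected because... — actually the cleanest route for $m=1$ is to directly verify simple connectivity of $|\Delta(\mathbf{P}_{1,i}(\Phi))|$-fibers by the same nerve-style argument one level down: the fiber is a union of contractible pieces indexed by $J'$, pairwise intersections are connected, and one shows every loop is contractible by first homotoping it into the $1$-skeleton of the nerve-like structure and then using that a loop passing through pieces $P_{j_0}, P_{j_1}, \ldots, P_{j_0}$ can be contracted piece by piece because consecutive pieces meet in a connected set and each piece is simply connected. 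I expect this $m=1$ base case to be the main obstacle, since it requires making the "connected pairwise intersections $+$ simply connected pieces $\Rightarrow$ simply connected union" argument fully rigorous, essentially reproving the relevant special case of the nerve lemma for $\pi_1$ directly for the combinatorial structure at hand; the inductive step $m \geq 2$ is then a straightforward van Kampen argument using the inductive hypothesis.

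Once Claim~\ref{thm:main:proof:claim:6} is established, combining it with the homological $(m-1)$-connectivity of the fibers from Claim~\ref{thm:main:proof:claim:5} and applying the Hurewicz isomorphism theorem (valid since the fibers are simply connected) yields that the fibers are genuinely $(m-1)$-connected, not just homologically so; Claim~\ref{thm:main:proof:claim:2'} then follows from the Vietoris--Begle theorem exactly as in the proof of Claim~\ref{thm:main:proof:claim:2}, completing the proof of Theorem~\ref{thm:main'}.
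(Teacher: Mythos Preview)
Your multi-piece van Kampen formulation for $m \geq 2$ is not correct as stated, and this is a genuine gap. The assertion that ``the fundamental group of the union is a quotient of the free product of the $\pi_1$'s of the pieces amalgamated along the $\pi_1$'s of the pairwise intersections'' fails for covers by more than two sets without extra hypotheses: take the circle covered by three open arcs each of length slightly more than $120^\circ$. Every piece is contractible and every pairwise intersection is a contractible arc, yet $\pi_1(S^1) = \mathbb{Z}$. What breaks is that the triple intersection is empty (equivalently, the nerve is not simply connected). In your setting the triple intersections happen to be nonempty for $m \geq 1$, so the conclusion is salvageable, but you neither state nor verify this, and making the argument rigorous amounts to proving a $\pi_1$-version of the nerve lemma --- which is more work than what the paper actually does.

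The paper's proof is both simpler and uniform in $m \geq 1$, with no split into $m=1$ and $m \geq 2$. It proves the stronger statement that for \emph{every} nonempty $J'' \subset J'$ the union $\big|\bigcup_{j \in J''} \Delta(\mathbf{Q}_{\{j\}}(x))\big|$ is simply connected, by induction on $\card(J'')$ rather than on $m$. The base case $\card(J'')=1$ holds because $\Delta(\mathbf{Q}_{\{j\}}(x))$ is a cone. For the inductive step one picks $j'' \in J''$ and applies the \emph{two-set} Seifert--van~Kampen theorem to the pair
\[
\big|\Delta(\mathbf{Q}_{\{j''\}}(x))\big| \quad\text{and}\quad \Big|\bigcup_{j \in J''\setminus\{j''\}} \Delta(\mathbf{Q}_{\{j\}}(x))\Big|.
\]
Both are simply connected (the first is a cone, the second by the induction on $\card(J'')$), so one only needs that their intersection
\[
\Big|\bigcup_{j \in J''\setminus\{j''\}} \Delta(\mathbf{Q}_{\{j,j''\}}(x))\Big|
\]
is \emph{connected}. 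The paper leaves this as an exercise. Note that this is exactly the ``simply connected pieces $+$ connected intersection $\Rightarrow$ simply connected union'' step you anticipated needing for $m=1$; organizing the induction over $\card(J'')$ instead of over $m$ makes this single step carry the entire argument, and no separate inductive use of simple connectivity of pairwise intersections is required.
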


\begin{proof}
Let 
\[
J' = \{j \in J\;\mid \;  \mathbf{Q}_{\{j\}}(x) \neq \emptyset \}.
\]
So
\[
(\widetilde{F}_{m,i}(\Phi))^{-1}(x) = \left|\bigcup_{j \in J'} \Delta(\mathbf{Q}_{\{j\}}(x))\right|.
\]

We prove the stronger statement that for all non-empty subsets $J'' \subset J'$,  
\[
\left|\bigcup_{j \in J''} \Delta(\mathbf{Q}_{\{j\}}(x))\right|
\]
is simply connected.

We argue using induction on $\card(J'')$. If $\card(J'') = 1$, then $\Delta(\mathbf{Q}_{\{j\}}(x))$, where $J'' = \{j\}$, 
is a cone and
so  $|\Delta(\mathbf{Q}_{\{j\}}(x))|$ is contractible and hence simply connected. 

Suppose, we have already proved that the claim holds for all subsets of $J'$ of cardinality strictly smaller than that of $J''$. 
Let $j'' \in J''$. Then, by the  induction hypothesis, we have that 
$\left|\bigcup_{j' \in J'' - \{j''\}} \Delta(\mathbf{Q}_{\{j'\}}(x))\right|$ is simply connected. 

We first 
show that 
\[
|\Delta(\mathbf{Q}_{\{j''\}}(x))| \cap \left|\bigcup_{j' \in J'' - \{j''\}} \Delta(\mathbf{Q}_{\{j'\}}(x))\right|
\]
is connected, which is equivalent to proving that 
\[
\HH^0(|\Delta(\mathbf{Q}_{\{j''\}}(x))| \cap \bigcup_{j' \in J'' - \{j''\}} |\Delta(\mathbf{Q}_{\{j'\}}(x))|) \cong \Z.
\]
The Mayer-Vietoris exact sequence in cohomology gives the following exact sequence:

$$
\displaylines{
\HH^0(\bigcup_{j' \in J''} |\Delta(\mathbf{Q}_{\{j'\}}(x))|) \rightarrow 
\HH^0(|\Delta(\mathbf{Q}_{\{j''\}}(x))|) \oplus 
\HH^0(\bigcup_{j' \in J'' - \{j''\}} |\Delta(\mathbf{Q}_{\{j'\}}(x))|)
\rightarrow  \cr
\HH^0(|\Delta(\mathbf{Q}_{\{j''\}}(x))| \cap \bigcup_{j' \in J'' - \{j''\}} |\Delta(\mathbf{Q}_{\{j'\}}(x))|) 
\rightarrow
\HH^1(\bigcup_{j' \in J''} |\Delta(\mathbf{Q}_{\{j'\}}(x))|).
}
$$

Applying \eqref{eqn:claim:4+} we have an exact sequence

\[
\Z \rightarrow 
\Z\oplus\Z
\rightarrow 
\HH^0\left(|\Delta(\mathbf{Q}_{\{j''\}}(x))| \cap \bigcup_{j' \in J'' - \{j''\}} |\Delta(\mathbf{Q}_{\{j'\}}(x))|\right) 
\rightarrow
0,
\]
where the first map is the diagonal embedding. This implies that 
\[
\HH^0\left(|\Delta(\mathbf{Q}_{\{j''\}}(x))| \cap \bigcup_{j' \in J'' - \{j''\}} |\Delta(\mathbf{Q}_{\{j'\}}(x))|\right) \cong \Z.
\]

Finally, using the fact that $|\Delta(\mathbf{Q}_{\{j''\}}(x))|$ is simply connected, it follows from the Seifert-van Kampen's theorem 
\cite[page 151]{Spanier} that  
$\left|\bigcup_{j \in J''} \Delta(\mathbf{Q}_{\{j\}}(x))\right|$ is simply connected.
\end{proof}

We also have the obvious analog of Lemma~\ref{lem:thm:main:proof:claim:2}.

\begin{manuallemma}{\ref*{lem:thm:main:proof:claim:2}$^\prime$}
\label{lem:thm:main:proof:claim:2'}
The semi-algebraic set $\RR(\Phi,\overline{B_k(0,R)})^J$ is a semi-algebraic deformation retract of 
\[
\bigcup_{\alpha \in \mathbf{P}_{m,i}(\Phi)} \widetilde{D}_{m,i}(\Phi)(\alpha),
\]
and hence $\RR(\Phi,\overline{B_k(0,R)})^J$ and $
\bigcup_{\alpha \in \mathbf{P}_{m,i}(\Phi)} \widetilde{D}_{m,i}(\Phi)(\alpha)
$
are semi-algebraically homotopy equivalent.
\end{manuallemma}

\begin{proof}
Similar to proof of Lemma~\ref{lem:thm:main:proof:claim:2} and omitted.
\end{proof}
\begin{proof}[Proof of Claim~\ref{thm:main:proof:claim:2'}]
It follows from
Claim~\ref{thm:main:proof:claim:5}, Claim~\ref{thm:main:proof:claim:6}, and 
Hurewicz isomorphism theorem \cite[Theorem 5, page 398]{Spanier}, that
for 
\[
x \in \RR(\Phi,\overline{B_k(0,R)})^J
\]
and $m \geq 1$,
$(\widetilde{F}_{m,i}(\Phi))^{-1}(x)$ 
is  $(m-1)$-connected.
Claim~\ref{thm:main:proof:claim:2'} now follows from the previous statement and
the homotopy version of the Vietoris-Begle theorem (see Remark~\ref{rem:homology-vs-homotopy}).
\end{proof}
Finally,
Theorem~\ref{thm:main'}
follows from
Claims~\ref{thm:main:proof:claim:1'}, ~\ref{thm:main:proof:claim:2'} and Lemma
\ref{lem:thm:main:proof:claim:2'}.
\end{proof}

\subsection{Upper bound on the size of the simplicial complex $\Delta(\mathbf{P}_{m,i}(\Phi))$}
\label{subsec:size}
We now prove an upper bound on the size of the simplicial complex $\Delta(\mathbf{P}_{m,i}(\Phi))$ assuming
a ``singly exponential'' upper bound on the function $\mathcal{I}_{i,k}(\cdot)$ and $\mathcal{C}_{i,k}(\cdot)$.

\begin{definition}
\label{def:comp}
For any closed formula $\phi$ with coefficients in a real closed field $\R$, let the \emph{size}
of 
$\phi$,
$\Comp(\phi)$ be the product of the number of polynomials appearing
in the formula $\phi$ and the maximum amongst the degrees of these polynomials. 
Similarly, if $J$ is any finite set, and $\Phi \in (\mathcal{F}_{\R,k})^J$, we denote 
by $\Comp(\Phi)$ the product of the total number of polynomials appearing
in the formulas $\Phi(j), j \in J$, and the maximum amongst the degrees of these polynomials. 
\end{definition}

\begin{theorem}
\label{thm:main:complexity}
Suppose that there exists $c >0$ such that for each $\phi \in \mathcal{F}_{\R_i,k}$,
\begin{eqnarray}
\nonumber
\mathcal{I}_{i,k}(\phi) &\leq& \left(\Comp(\phi)\right)^{k^c}, \\
\label{eqn:thm:main:complexity:hyp}
\max_{j \in [\mathcal{I}_{i,k}(\phi)]} \Comp(\mathcal{C}_{i,k}(\phi) (j)) &\leq& \left(\Comp(\phi)\right)^{k^c}.
\end{eqnarray}
Let $J$ be a finite set and $\Phi \in \left(\mathcal{F}_{\R_i,k}\right)^J$.
Then the number of simplices in $\Delta(\mathbf{P}_{m,i}(\Phi))$ is bounded by
\[
(\card(J)D)^{k^{O(m)}},
\]
where 
\[
D = \Comp(\Phi).
\]
\end{theorem}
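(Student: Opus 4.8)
The plan is to unwind the recursive definition of $\mathbf{P}_{m,i}(\Phi)$ (Definition~\ref{def:poset}) and bound the number of chains in $\mathbf{P}_{m,i}(\Phi)$ by a careful induction on $m$, keeping precise track of how the parameter ``$\mathrm{Comp}$'' of the formulas grows as we descend through the levels of the construction. First I would record the elementary bookkeeping: an element of $\mathbf{P}_{m,i}(\Phi)$ is a tuple $(I_0,\dots,I_r,\emptyset)$ as in \eqref{eqn:P-alternate}, where $\sum_{j=0}^{r-1}\mathrm{card}(I_j) \le m + r - 1$ roughly (each $\mathrm{card}(I_j)\ge 2$ for $j<r$ forces $r = O(m)$), so the depth $r$ of the recursion is bounded by $m+1$. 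At each level the formulas in play are conjunctions $\bigwedge_{j\in I}\Phi(j)$ of at most $m+2$ of the formulas carried along, followed by an application of $\mathcal{C}_{i,k}$; by hypothesis \eqref{eqn:thm:main:complexity:hyp}, passing through one level multiplies the exponent in the complexity bound by a factor of $k^c$, and the number of choices at that level (the index set $[\mathcal{I}_{i,k}(\cdot)]$, together with the choice of $I\subset_{\le m+2}J$) is bounded by $(\mathrm{card}(J)D)^{k^{c}}$ times a combinatorial factor $\mathrm{card}(J_\bullet)^{m+2}$ for the subset choice.

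The key step is to set up the induction so that the statement being proved at stage $m$ reads: the number of simplices of $\Delta(\mathbf{P}_{m,i}(\Phi))$ is at most $(\mathrm{card}(J)\,D)^{k^{a(m)}}$, where $a(m)$ is an explicit function growing like $C^m$ for a constant $C$ depending only on $c$ (so $a(m) = O(m)$ for fixed $c$, and the final exponent $k^{O(m)}$ comes out). For the inductive step, I would use \eqref{eqn:P}: a simplex of $\Delta(\mathbf{P}_{m,i}(\Phi))$ is a chain, and a chain either lies entirely in the ``vertex layer'' $\bigcup_{j\in J}\{\{j\}\}\times\{\emptyset\}$ (contributing at most $\mathrm{card}(J)+1$ simplices, trivially), or it is built from chains in the subposets $\{I\}\times\mathbf{P}_{m-\mathrm{card}(I)+1,i+1}(\Phi_{m,i,I,J})$ over the various $I\subset_{\le m+2}J$, glued with the vertices $(\{j\},\emptyset)$, $j\in I$. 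By property~(b) of $J_{m,i,I,\Phi}$ (the nesting $J_{m,i,I,\Phi}\subset J_{m,i,I',\Phi}$ for $I'\subset I$), and Lemma~\ref{lem:property-order}, a chain in $\mathbf{P}_{m,i}(\Phi)$ restricted to the levels below the top is a chain in one fixed such subposet, so the total count is bounded by $(\mathrm{card}(J)+1) \cdot \sum_{I\subset_{\le m+2}J} (\text{number of simplices of }\Delta(\mathbf{P}_{m-\mathrm{card}(I)+1,i+1}(\Phi_{m,i,I,J})))\cdot 2^{\mathrm{card}(I)}$. Now I estimate $\mathrm{card}(J_{m,i,I,\Phi})$ from \eqref{eqn:J:base}–\eqref{eqn:J:induction}: it is a union of at most $\sum_{t\le m+2}\binom{\mathrm{card}(J)}{t}$ pieces, each of size $\le \mathcal{I}_{i,k}(\cdot)+1 \le (\mathrm{Comp}(\bigwedge\Phi(j)))^{k^c}+1 \le (D)^{k^c}+1$ since a conjunction of $\le m+2$ of the $\Phi(j)$ has $\mathrm{Comp}$ at most $D$ (say $D$ already accounts for all polynomials across the tuple; if not, a further factor $(m+2)^{O(1)}$). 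Similarly $\mathrm{Comp}(\Phi_{m,i,I,J}(\alpha)) \le D^{k^c}$. Plugging the inductive bound with $m$ replaced by $m-\mathrm{card}(I)+1 \le m-1$, $\mathrm{card}(J)$ by $\mathrm{card}(J_{m,i,I,\Phi})\le (\mathrm{card}(J)D)^{O(k^c)}$, and $D$ by $D^{k^c}$, yields
\[
(\text{simplices of }\Delta(\mathbf{P}_{m,i}(\Phi))) \le (\mathrm{card}(J)D)^{O(k^c)}\cdot \left((\mathrm{card}(J)D)^{O(k^c)}\cdot D^{k^c}\right)^{k^{a(m-1)}},
\]
which simplifies to $(\mathrm{card}(J)D)^{k^{a(m-1)+O(c)}}$; choosing $a(m) = a(m-1) + O(c)$ — no, more carefully $a(m) = (\text{const})\cdot a(m-1)$ because the exponent $k^{a(m-1)}$ gets multiplied by a further $k^{O(c)}$, i.e. $a(m) = a(m-1) + O(c)$ actually suffices if one is careful, but in the worst case one gets $a(m) = O(c)\cdot a(m-1)$ — either way $a(m) = k^{O(m)}$... rather, $a(m)$ grows at most exponentially in $m$, giving the exponent $k^{O(m)}$ as claimed (for fixed $c$, $O(m)$ absorbs the constant).

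The main obstacle will be tracking the complexity growth through the nested Puiseux extensions and the index-set inflation simultaneously: at depth $r$ the ambient field is $\R_{i+r}$ and the relevant index set $J_r$ has been inflated $r$ times, so one must verify that $r = O(m)$ (which follows from $\mathrm{card}(I_j)\ge 2$ at all non-terminal levels, forcing the ``budget'' $m$ to decrease), and that the iterated exponentiation $D \mapsto D^{k^c} \mapsto D^{k^{2c}} \mapsto \cdots$ over $O(m)$ steps stays of the form $D^{k^{O(m)}}$ — here one uses that $(D^{k^{jc}})^{k^c} = D^{k^{(j+1)c}}$ and $jc = O(m)$ for $j \le r = O(m)$, so indeed $k^{jc} = k^{O(m)}$. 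Care is also needed in the base case $m=-1$ (resp. $m=0$): $\Delta(\mathbf{P}_{-1,i}(\Phi))$ has exactly $\mathrm{card}(J)$ vertices and no higher simplices, which anchors the induction cleanly. Once these two bookkeeping points are pinned down, the rest is the routine estimate sketched above, and the $(\mathrm{card}(J)D)^{k^{O(m)}}$ bound drops out.
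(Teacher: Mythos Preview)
Your outline identifies the right ingredients --- the depth bound $r \le m+1$ from $\card(I_j)\ge 2$ for $j<r$, the iterated blow-up $D \mapsto D^{k^c} \mapsto D^{k^{2c}} \mapsto \cdots$ over $O(m)$ levels, and the corresponding growth of the index sets $J_j$ --- and these are exactly what the paper uses. The difference is in organization: the paper does \emph{not} do a direct induction on the number of simplices. Instead it (i) bounds $\card(J_{m,i,I,\Phi})$ explicitly, (ii) shows $r\le m+1$, (iii) iterates to bound $\card(J_j)$ and $\Comp(\Phi_j)$ at each depth $j$, (iv) bounds $\card(\mathbf{P}_{m,i}(\Phi))$ by counting tuples $(I_0,\dots,I_r)$, and (v) bounds the length of any chain in $\mathbf{P}_{m,i}(\Phi)$ by $2m+2$. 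The simplex count then follows from $(\card\text{ of poset})^{(\text{chain length})+1}$. This is cleaner and avoids the simplex-level induction entirely.

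Your direct induction has a genuine gap at the step ``a chain in $\mathbf{P}_{m,i}(\Phi)$ restricted to the levels below the top is a chain in one fixed such subposet.'' It is not: if $\alpha_0 \prec \cdots \prec \alpha_q$ are the chain elements with $\card(I_0^h)\ge 2$, the first components $I_0^0 \supsetneq I_0^1 \supsetneq \cdots$ can strictly decrease along the chain, so these elements lie in \emph{different} pieces $\{I_0^h\}\times\mathbf{P}_{m-\card(I_0^h)+1,i+1}(\Phi_{m,i,I_0^h,J})$. It is true (via Lemma~\ref{lem:property-order}) that the tails $\alpha_h'$ all embed into the largest such $\mathbf{P}$-poset and form a weak chain there, but the map $(I_0^h,\alpha_h')\mapsto \alpha_h'$ is not injective on chains (distinct $I_0^h$ can carry the same $\alpha_h'$), so your displayed bound $\sum_{I}(\text{simplices of }\Delta(\mathbf{P}_{m-\card(I)+1,\dots}))\cdot 2^{\card(I)}$ undercounts: you must also pay for the nested flag $I_0^0\supset\cdots\supset I_0^q$. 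This is fixable --- multiply by a factor $\card(J)^{O(m)}$ for the flag, or simply abandon the simplex induction and bound $\card(\mathbf{P}_{m,i}(\Phi))$ and chain length separately as the paper does. Your wobble over whether $a(m)=a(m-1)+O(c)$ or $a(m)=O(c)\cdot a(m-1)$ is a symptom of the same issue; once you separate cardinality from chain length the recursion becomes the straightforward iteration $D\mapsto D^{k^c}$ over $\le m+1$ steps.
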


\begin{proof}

Recall 
that 
the elements of $\mathbf{P}_{m,i}(\Phi)$ 
are finite  tuples
\[
(I_0,\ldots,I_{r},\emptyset),
\]
where for each, $h, 0 \leq h\leq r$,  $I_h$ is a subset of a certain
set $J_h$ defined in Section~\ref{subsubsec:tuple-of-sets}.

We first bound the cardinalities of the various $J_{h}$'s occurring in the sequence above.
\begin{claim}
\label{claim:proof:thm:main:complexity:0}
For any $i' \geq 0$, $m' \geq -1$, finite set $J'$, $I' \subset_{m'+2} J'$, and $\Phi' \in (\mathcal{F}_{\R_{i'},k})^{J'}$,
\[
\card(J'_{m',i',I',\Phi'}) \leq  (\card(J'))^{m'+1} (\Comp(\Phi'))^{k^c}.
\]
\end{claim}

\begin{proof}[Proof of Claim~\ref{claim:proof:thm:main:complexity:0}]
Let for each fixed $i,k$,
\[
F(M',N',m',D') = \max_{\substack{J', \card(J') = N', \\
I' \subset_{m'+2} J', \card(I') = M',\\
 \Phi' \in \mathcal{F}_{\R_i,k}, \Comp(\Phi') =D'}}
 \card(J'_{m',i,I',\Phi'}).
\] 
Using Eqns. \eqref{eqn:J:base} and \eqref{eqn:J:induction} and 
Eqn. \eqref{eqn:thm:main:complexity:hyp}, we obtain:

\begin{eqnarray*}
F(m'+2,N',D') &\leq& D'^{k^c}, \\
F(M',N,D') &\leq & D'^{k^c} + (N'-M') F(M'+1,N',D'), \mbox{ for } 1 < M' < m'+2.
\end{eqnarray*}

It follows that
\begin{eqnarray*}
F(M',N',D') &\leq& D'^{k^c}(1 + N' + N'^2 +\cdots+ N'^{m'+2 - M'}) \\
&\leq& D'^{k^c} N'^{m'+1} \mbox{ for } 1 < M' \leq m'+2.
\end{eqnarray*}
The claim follows from the above inequality.
\end{proof}

\begin{claim}
\label{claim:proof:thm:main:complexity:1}
For $(I_0,\ldots,I_r,\phi) \in \mathbf{P}_{m,i}(\Phi)$, $r \leq m+1$.
\end{claim}

\begin{proof}[Proof of Claim~\ref{claim:proof:thm:main:complexity:1}]
The claim follows from the fact that $\card(I_0), \ldots,\card(I_{r-1}) \geq 2$, and hence
it follows from  Eqn.~\eqref{eqn:P-alternate:2} that 
\[
2 r \leq  \sum_{0 \leq j < r} \card(I_j) \leq m+ (r-1) + 2.
\]
It follows that
\[
r \leq m+1.
\]
\end{proof}

\begin{claim}
\label{claim:proof:thm:main:complexity:2}
For
every tuple $(I_0,\ldots,I_r,\emptyset) \in \mathbf{P}_{m,i}(\Phi)$,
$0 \leq h \leq r$, 
\begin{eqnarray*}
\Comp(\Phi_h(\alpha)) &\leq& D^{k^{c h}}, \mbox{ for } \alpha \in  J_h,\\
\card(J_{h}) &\leq&  N^{(m+1)^{h}} D^{(k(m+1))^{c h}},
\end{eqnarray*}
where $J_h,\Phi_h, 0 \leq j \leq r$ are defined in Eqn. \eqref{eqn:P-alternate}, and
$N = \card(J)$.
\end{claim}

\begin{proof}[Proof of Claim~\ref{claim:proof:thm:main:complexity:2}]
The claim is obviously true for $h=0$. 
Also, note that for each $h, 0\leq h \leq r$,
\[
m_h \leq m.
\]
The claim now follows by induction on $h$, 
using the inductive definitions of $J_h,\Phi_h$ (see Eqn.~\eqref{eqn:P-alternate}),
Eqn. \eqref{eqn:thm:main:complexity:hyp}, and
Claim~\ref{claim:proof:thm:main:complexity:0}.
\end{proof}

\begin{claim}
\label{claim:proof:thm:main:complexity:3}
\[
\card(\mathbf{P}_{m,i}(\Phi)) \leq (\card(J)D)^{k^{O(m)}}.
\]
\end{claim}

\begin{proof}[Proof of Claim~\ref{claim:proof:thm:main:complexity:3}]
In order to bound the cardinality of $\mathbf{P}_{m,i}(\Phi)$, we bound the number of possible
choices of $I_0,\ldots,I_r$ for $(I_0,\ldots,I_r,\emptyset) \in \mathbf{P}_{m,i}(\Phi)$.

It follows from Eqn.~\eqref{eqn:P-alternate:2}, that for each $h, 0 \leq h \leq r$,
\begin{eqnarray*}
\card(I_h) &\leq& m - \sum_{t=0}^{h-1} \card(I_t) + h + 2 \\
&\leq& m - 2h +h +2 \; (\mbox{since } \card(I_t) \geq 2, 0 \leq t < r) \\
&\leq& m - h +2 \\
&\leq&  m+2.
\end{eqnarray*}

Since by Claim~\ref{claim:proof:thm:main:complexity:2}
for $0 \leq h \leq r$, 
\[
\card(J_{h}) \leq  N^{(m+1)^{h}} D^{(k(m+1))^{c h}},
\]
the number of choices for $I_h$ is clearly bounded by 
\[
\sum_{t=2}^{m+2} \binom{N^{(m+1)^{h}} D^{(k(m+1))^{c h}}}{h} \leq N^{m^{O(h)}} D^{k^{O(h)}},
\]
noting that $m \leq k$.
The above inequality, together with the fact that $r \leq m+1$ (by Claim~\ref{claim:proof:thm:main:complexity:1}),  proves the  claim.
\end{proof}

\begin{claim}
\label{claim:proof:thm:main:complexity:4}
The length of any chain in  $\mathbf{P}_{m,i}(\Phi)$ is bounded by $2 m+2$.
\end{claim}
\begin{proof}[Proof of Claim~\ref{claim:proof:thm:main:complexity:4}]
Suppose that $\alpha = (I_0^\alpha,\ldots,I_{r_\alpha}^\alpha,\emptyset), \beta = (I_0^\beta,\ldots, I_{r_\beta}^\beta,\emptyset) \in \mathbf{P}_{m,i}(\Phi)$, $\beta \precneq \alpha$ and $\alpha \neq \beta$.

It follows from Eqn. \eqref{eqn:order-alternate} that
\begin{equation*}
(r_\alpha \leq r_\beta) \mbox{ and } I^\alpha_h \subset I^\beta_h, 0 \leq h \leq r_\alpha.  
\end{equation*}

In particular, this implies that
$0 < \sum_{h=0}^{r_\alpha} \card(I_h^\alpha) < \sum_{h=0}^{r_\beta} \card(I_h^\beta)$.
Since for any $(I_0,\ldots,I_r,\emptyset) \in \mathbf{P}_{m,i}(\Phi)$,
we have that 
\[
\sum_{0 \leq h < r} \card(I_h) \leq m+r+2,
\]
\[
\card(I_r) = 1,
\]
and 
\[
r \leq m+1,
\]
it follows immediately that the length of a chain in $\mathbf{P}_{m,i}(\Phi)$ is bounded by $2m+2$.
\end{proof}

The theorem follows from Claims~\ref{claim:proof:thm:main:complexity:1}, \ref{claim:proof:thm:main:complexity:2}, 
\ref{claim:proof:thm:main:complexity:3}
and \ref{claim:proof:thm:main:complexity:4}.
\end{proof}

\section{Simplicial replacement: algorithm}
\label{sec:algo}
We begin with some mathematical and algorithmic preliminaries.

\subsection{Mathematical preliminaries}
\subsubsection{Making closed}
We need to take care of the following technical issue.
The output of Algorithm~\ref{alg:contractible-cover}  (Covering by Contractible Sets) described below, consists 
of a tuple of formulas whose realizations are closed and
semi-algebraically contractible semi-algebraic sets, but the
formulas themselves need not be closed. However, in the recursive
Algorithm~\ref{alg:poset}  (Computing the poset $\mathbf{P}_{m,i}(\Phi)$)
we need to assume that the input formulas are closed. 
We get around this problem by a construction which allows us to replace a
formula (not necessarily closed) defining a closed and bounded semi-algebraic set $S$ by another \emph{closed} formula
defining a  semi-algebraic set $S'$ such that
$S' \searrow S$. The construction is quite similar (but not identical) to the one by Gabrielov and Vorobjov \cite{GaV}. In the construction given in \cite{GaV} the original set is not necessarily a deformation retract of the new one. By using the extra property that we assume, namely that the given set is closed (albeit without a closed description), we are able to ensure that it is a retract of the new one defined by a closed formula. 

We remark here that the algorithmic problem of  obtaining a closed description of a given closed semi-algebraic set
(described by a not formula which is not necessarily closed) 
is a difficult problem
for which no algorithm with singly exponential complexity is known in general. We do not solve this general problem, because the closed description that we obtain does not describe the original set, but a 
closed (infinitesimal) neighborhood of it.

The key result of this section is Lemma~\ref{16:lem:star}.

Let $\mathcal{P} = \{P_1,\ldots,P_s\}
\subset \R[X_1,\ldots,X_k]$ be a finite set of polynomials, and let $B \subset \R^k$ a closed euclidean ball.

\begin{notation}
\label{not:level}
For $\sigma \in \{0,1,-1\}^\mathcal{P}$, let 
\[
\level(\sigma) = \card(\{P \in \mathcal{P} \mid \sigma(P) = 0 \}).
\]
\end{notation}	

For $c,d \in \R, 0< d < c$, and $\sigma \in \{0,1,-1\}^\mathcal{P}$, let
$\overline{\sigma}(c,d)$ denote the closed formula
\[
\bigwedge_{\sigma(P) = 0} (-d \leq P \leq d) \wedge \bigwedge_{\sigma(P) = 1} (P \geq c) \wedge \bigwedge_{\sigma(P) = -1} (P \leq -c).
\]

\begin{notation}
\label{not:Sigma-phi}
For a $\mathcal{P}$-formula $\phi$ we denote 
\[
\Sigma_{\phi} = 
\{ \sigma \in \{0,1,-1\}^\mathcal{P} \mid 
\left(\bigwedge_{P \in \mathcal{P}} (\mathrm{sign}(P) = \sigma(P))\right) \Rightarrow \phi \},
\]
where ``$\Rightarrow$''  denotes logical implication.
\end{notation}

Let 
\[
\R' = \R\la \mu_s,\nu_s, \cdots, \mu_0, \nu_0\ra = \R\la\bar\eta\ra,
\]
denoting by $\bar\eta$ the sequence $\mu_s,\nu_s, \ldots, \mu_0, \nu_0$.

\begin{notation}
\label{not:P-star}
We denote
\[
\mathcal{P}^*(\bar{\mu},\bar{\nu}) = \bigcup_{P \in \mathcal{P}} \bigcup_{j = 0}^{s} \{P \pm \mu_j, P\pm \nu_j\} \subset \R'[X_1,\ldots,X_k].
\]
\end{notation}

Finally, 
\begin{notation}
\label{not:phi-star}
We denote by
$\phi^*(\bar{\mu},\bar{\nu})$ the $\mathcal{P}^*(\bar{\mu},\bar{\nu})$-\emph{closed} formula
\[
\bigvee_{\sigma \in \Sigma_{\phi}} \overline{\sigma}(\mu_{\level(\sigma)},\nu_{\level(\sigma)})
\]
(see Notation~\ref{not:Sigma-phi}).
\end{notation}

Following the notation introduced above. 
\begin{lemma}
\label{16:lem:star}
Let $R > 0, B = \overline{B_k(0,R)}$, and 
suppose that $S = \RR(\phi,B)$ is closed. Then,
\[
S'  \searrow S,
\]
where $S' = \RR(\phi^*(\bar{\mu},\bar{\nu}),\E(B,\R'))$.
In particular, $\E(S,\R')$ is a semi-algebraic deformation retract of 
$S'$.
\end{lemma}

\begin{proof}
See Appendix \ref{sec:AppendixA}.
\end{proof}

\begin{remark}
\label{rem:16:lem:star}
It is necessary to use multiple infinitesimals in the construction given above. As a warning consider the following example.

\begin{example}
Let $k=1,s=2, B = [-2,2]$, and
\begin{eqnarray*}
P_1 &=& X^2(X-1), \\
P_2 &=& X.
\end{eqnarray*}
Let $\sigma_1,\sigma_2$ be defined by,
$$\displaylines{
\sigma_1(P_1) =1, \sigma_2(P_2) = 1, \cr
\sigma_1(P_1) =0, \sigma_2(P_2) = 1.
}
$$
Let $\phi$ be the unique formula such that $\Sigma_{\phi} = \{\sigma_1,\sigma_2\}$.
Then, $\RR(\phi,B) = [1,2]$ is a closed semi-algebraic set, but
$\phi$ is not a closed formula.

However, if we take the closed formula $\phi^*(\mu_0,\ldots,\mu_0)$
(i.e. using only one infinitesimal)
then
\[
\lim_{\mu_0} \RR(\phi^*(\mu_0,\ldots,\mu_0),B) = \{0\} \cup [1,2] \supsetneq \RR(\phi,B).
\]
However, it is easy to verify that
\[
\RR(\phi^*(\bar\mu,\bar\nu),B) \searrow  \RR(\phi,B) = [1,2].
\]
\end{example}
\end{remark}

\subsubsection{Strong general position}
We need the following notion of ``strong general position'' of a finite set of polynomials. It is a required property for the input to Algorithm~\ref{alg:contractible-cover}.

\begin{definition}
\label{def:general-position}
Let 
$\mathcal{P}  \subset \R [X_{1} , \ldots ,X_{k}
]$ be a  finite set. We say that $\mathcal{P}$ is in
\emph{$\ell$-general position}, if no more than 
$\ell$ polynomials belonging to $\mathcal{P}$ have a common zero in $\R^{k}$.
The set $\mathcal{P}$ is in \emph{strong $\ell$-general
position}  if moreover any $\ell$
polynomials belonging to $\mathcal{P}$ have at most a
finite number of common zeros in $\R^{k}$.
\end{definition}

Using the same notation as in Lemma~\ref{16:lem:star} we have:
\begin{lemma}
\label{lem:general-position}
The set 
\[
\mathcal{P}^*(\bar{\mu},\bar{\nu})
\]
is in strong $k$-general position.
\end{lemma}

\begin{proof}
The claim follows easily from the fact that  $\mu_0,\ldots,\mu_s,\nu_0,\ldots,\nu_s$ are algebraically independent over $\R$ and semi-algebraic Sard's theorem 
\cite[Theorem 5.56]{BPRbook2}.
\end{proof}

We now describe some preliminary algorithms that we will need.
\subsection{Algorithmic preliminaries}
The following algorithm is described in \cite{BPRbook2}. We briefly recall the input, output and complexity.
We made a small and harmless modification to the input by requiring that the closed semi-algebraic
of which the covering is being computed is contained in the closed ball of radius $R$ centered at the origin, rather than in the sphere of radius $R$. This is done to avoid complicating notation down the road and
is not significant since the algorithm can be easily modified to accommodate this change without 
any change in the complexity estimates.

\begin{algorithm}[H]
\caption{(Covering by Contractible Sets)}
\label{alg:contractible-cover}
\begin{algorithmic}[1]
\INPUT
\Statex{
\begin{enumerate}[(a)]
\item 
a finite set of $s$ polynomials ${\mathcal P} \subset \D[\bar\eps][X_1,\ldots,X_k]$
in strong $k$-general position on $\R^k$,
with $\deg(P_i) \leq d$ for $1 \leq i \leq s$,
\item a  ${\mathcal P}$-closed formula $\phi$ such that
semi-algebraic set $\RR(\phi) \subset \overline{B_k(0,R)}$, 
for some $R >0$, $R \in \R$.
\end{enumerate}
}
\OUTPUT
\Statex{
\begin{enumerate}[(a)]
\item a finite set of polynomials $\mathcal{H} \subset \D[\bar\eps,\bar\zeta][X_1,\ldots,X_k]$,
where $\bar\zeta = (\zeta_1,\ldots,\zeta_{2 \card(\mathcal{H})})$;
\item
a tuple  of $\mathcal{H}$-formulas $(\theta_\alpha)_{\alpha \in I}$ such that
each  $\RR(\theta_\alpha,\R\la\bar\eps,\bar\zeta\ra^k), \alpha \in I$ is a closed
semi-algebraically contractible set, and
\item 
\[
\bigcup_{\alpha \in I} 
\RR(\theta_\alpha,\R\la\bar\eps,\bar\zeta\ra^k) = \RR(\psi,\R\la\bar\eps,\bar\zeta\ra^k).
\]
\end{enumerate}
}
\COMPLEXITY
The complexity of the algorithm is 
bounded by $(\card(\mathcal{P})^{(k+1)^2}D^{k^{O(1)}}$, 
where $D = \max_{P \in \mathcal{P}} \deg_{\bar{X},\bar{\eps}} (P)$.
Moreover,
\begin{eqnarray*}
\card(I), \card(\mathcal{H}) &\leq& (\card(\mathcal{P}) D)^{k^{O(1)}}, \\ 
\deg_{\bar{Y}}(H), \deg_{\bar{\eps}}(H),\deg_{\bar{\zeta}}(H) &\leq&
D^{k^{O(1)}}.
\end{eqnarray*}

Suppose that $\bar{\eps} = (\eps_1,\ldots,\eps_t)$, and that 
each polynomial in $\mathcal{P}$ depends on at most $m$ of the $\eps_i$'s.
Then,
each polynomial appearing in $\mathcal{H}$ depends on at most
$m(k+1)^2$ of $\eps_i$'s,  and on at most one of the $\zeta_i$'s. 
\end{algorithmic}
\end{algorithm}

\begin{remark}
\label{rem:local}
Note that the last claim in the complexity of Algorithm~\ref{alg:contractible-cover}, namely that each polynomial appearing in any of the formulas $\theta_\alpha$ depends on at most
$m(k+1)^2$ of $\eps_i$'s, and on at most one of the $\zeta_i$'s,
does not appear explicitly in \cite{BPRbook2}, but is evident on a close examination of the algorithm. It is also 
reflected in the fact that the combinatorial part (i.e. the part depending on $\card(\mathcal{P})$) 
of the complexity of Algorithm 16.14 in \cite{BPRbook2} is bounded by $\card(\mathcal{P})^{(k+1)^2}$. This is
because the Algorithm 16.14 in \cite{BPRbook2} has a ``local property'', namely that all computations involve
at most a small number  (in this case $(k+1)^2$) polynomials in the input at a time.
\end{remark}

\hide{
\begin{notation}
Let $\mathcal{P} = \{P_1,\ldots,P_s \} \subset \D[X_1,\ldots,X_k]$.
For $1 \leq i \leq s$, let
\[ H_{i} =1+ \sum_{1 \leq j \leq k} i^{j} X_{j}^{d'} . \]
where $d'$ is the smallest number strictly bigger than the degree of all the
polynomials in $\mathcal{P}$. 

For $\phi$ a $\mathcal{P}$-closed formula, we will denote by $\phi^\star(\zeta)$ the formula obtained from $\phi$
by replacing any occurrence of 
  $P_{i} \ge 0$  with $P_{i} \ge - \zeta H_{i}$, and
  any occurrence of  $P_{i} \le 0$  with $P_{i} \le \zeta H_{i}$, for each $i, 1 \leq i \leq s$.
  \end{notation}

\begin{definition}
\label{def:general-position}
Let 
$\mathcal{P}  \subset \R [X_{1} , \ldots ,X_{k}
]$ be a  finite set. We say that $\mathcal{P}$ is in
\emph{$\ell$-general position}, if no more than 
$\ell$ polynomials belonging to $\mathcal{P}$ have a common zero in $\R^{k}$.

The set $\mathcal{P}$ is in \emph{strong $\ell$-general
position}  if moreover any $\ell$
polynomials belonging to $\mathcal{P}$ have at most a
finite number of common zeros in $\R^{k}$.
\end{definition}

\begin{lemma}
The set 
\[
\Def(\mathcal{P},\zeta)=  \{ P_{i} \pm \zeta H_{i} \mid 1 \leq i \leq s\} 
\]
is in strong $k$-general position.
\end{lemma}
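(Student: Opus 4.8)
The plan is to verify directly the two defining conditions of strong $k$-general position (Definition~\ref{def:general-position}) for the set $\Def(\mathcal P,\zeta)$, namely: (A) no $k+1$ of its $2s$ members have a common zero in $\R\langle\zeta\rangle^k$, and (B) any $k$ of them have only finitely many common zeros there. Throughout write $Q_i^{\pm}=P_i\pm\zeta H_i$; since $\deg P_i<d'$ and $\zeta\neq 0$, each $Q_i^{\pm}$ has degree exactly $d'$ in $X_1,\dots,X_k$, with degree-$d'$ homogeneous part $\pm\zeta\sum_{j=1}^k i^{\,j}X_j^{d'}$. The single most useful elementary observation is that, $d'$ being even, $H_i(x)=1+\sum_j i^{\,j}x_j^{d'}\ge 1>0$ for every $x\in\R\langle\zeta\rangle^k$; hence $Q_i^+(x)-Q_i^-(x)=2\zeta H_i(x)\neq 0$, so $Q_i^+$ and $Q_i^-$ never vanish simultaneously over the real closed field. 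Consequently any subset of $\Def(\mathcal P,\zeta)$ containing both members of some pair $\{Q_i^+,Q_i^-\}$ has empty common zero set, and (A), (B) are trivial for it; so in proving (A) and (B) I may assume the chosen polynomials carry pairwise distinct indices.

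For (B) with distinct indices $i_1,\dots,i_k$ I would homogenize the chosen $Q_{i_\ell}^{\epsilon_\ell}$ to degree $d'$ and look at the hyperplane at infinity $\{X_0=0\}$: a point there common to all $k$ homogenizations satisfies $\sum_{j=1}^k i_\ell^{\,j}X_j^{d'}=0$ for $\ell=1,\dots,k$ (the $P_{i_\ell}$-parts have degree $<d'$ and contribute nothing in degree $d'$), and the $k\times k$ matrix $(i_\ell^{\,j})$ — a Vandermonde matrix in $i_1,\dots,i_k$ after dividing row $\ell$ by $i_\ell$ — is invertible, forcing $X_1^{d'}=\dots=X_k^{d'}=0$, i.e.\ $X_1=\dots=X_k=0$, which together with $X_0=0$ is no point of $\PP^k$. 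Thus the projective closure of the affine common zero set is disjoint from $\{X_0=0\}$, hence both complete and affine, hence finite, giving (B). For (A) with distinct indices $i_1,\dots,i_{k+1}$ I would invoke the classical multivariate resultant $\Res(\widetilde Q_1,\dots,\widetilde Q_{k+1})$ of the degree-$d'$ homogenizations in $X_0,\dots,X_k$, which vanishes exactly when these forms have a common zero in $\PP^k$ over the algebraic closure. Since $\Res$ is homogeneous of degree $(d')^k$ in the coefficients of each argument and each such coefficient is affine-linear in $\zeta$, the coefficient of the top power of $\zeta$ in $\Res(\widetilde Q_1,\dots,\widetilde Q_{k+1})\in\R[\zeta]$ equals, up to sign, $\Res(\widetilde H_{i_1},\dots,\widetilde H_{i_{k+1}})$ with $\widetilde H_i=\sum_{j=0}^k i^{\,j}X_j^{d'}$; this is nonzero because a common zero of the $\widetilde H_{i_\ell}$ would give a nontrivial solution of the $(k+1)\times(k+1)$ Vandermonde system $\sum_{j=0}^k i_\ell^{\,j}X_j^{d'}=0$, again forcing all $X_j=0$. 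Hence $\Res(\widetilde Q_1,\dots,\widetilde Q_{k+1})$ is a nonzero polynomial in $\zeta$, so it does not vanish at the infinitesimal $\zeta$, so the $\widetilde Q_\ell$ have no common projective zero and a fortiori the chosen $Q_{i_\ell}^{\epsilon_\ell}$ have no common zero in $\R\langle\zeta\rangle^k$.

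I expect the main subtlety to be the bookkeeping around which field one argues over: the positivity trick that disposes of same-index pairs is valid only over the real closed field $\R\langle\zeta\rangle$ (and genuinely uses that $d'$ is even — with $d'$ odd the statement can fail, since then $H_i$ may vanish at a real point where $P_i$ also vanishes, producing a same-index pair with a common real zero), whereas the resultant and projective-closure arguments are cleanest over the algebraic closure; one must keep straight that absence of a common zero over the algebraic closure implies absence over $\R\langle\zeta\rangle$, and that $\R\langle\zeta\rangle$-finiteness follows from finiteness over the algebraic closure. A secondary point to pin down carefully is the identification of the leading $\zeta$-coefficient of the resultant with $\pm\Res(\widetilde H_{i_1},\dots,\widetilde H_{i_{k+1}})$, which rests on the multihomogeneity of $\Res$ together with its scaling behavior $\Res(\lambda_1F_1,\dots,\lambda_{k+1}F_{k+1})=\bigl(\prod_\ell\lambda_\ell^{(d')^k}\bigr)\Res(F_1,\dots,F_{k+1})$.
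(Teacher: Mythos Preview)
Your proof is correct and is essentially the standard argument---the paper gives no self-contained proof but simply cites Proposition~13.6 of \cite{BPRbook2}, whose argument you have faithfully reconstructed (Vandermonde for the leading forms, the multivariate resultant for generic nonvanishing in $\zeta$, and positivity of $H_i$ to dispose of same-index pairs). Your caveat that the positivity step requires $d'$ to be even is well taken; the paper's phrase ``smallest number strictly bigger than the degree'' should be read, as in \cite{BPRbook2}, as the smallest \emph{even} such integer.
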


\begin{proof}
See proof of Proposition 13.6 in \cite{BPRbook2}.
\end{proof}
}

\hide{
\begin{lemma}
  \label{16:lem:star} 
  Let $R \in \R, R > 0$.
  The semi-algebraic set $\E(\RR(\phi,\overline{B_k(0,R)}), \R \la \zeta \ra )$ is semi-algebraically
  homotopy equivalent to $\RR(\phi^\star(\zeta), \overline{B_k(0,R)})$.
\end{lemma}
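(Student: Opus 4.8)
\textbf{Proof plan for Lemma~\ref{16:lem:star}.}
The plan is to reduce this to the monotonicity/deformation-retract machinery already set up in the paper, specifically Lemma~\ref{lem:monotone}, together with the cited Lemma 16.17 of \cite{BPRbook2}. The key observation is that $\phi^\star(\zeta)$ is a ``relaxation'' of $\phi$: every atom $P_i \ge 0$ has been loosened to $P_i \ge -\zeta H_i$ and every atom $P_i \le 0$ to $P_i \le \zeta H_i$, where the auxiliary polynomials $H_i$ are strictly positive on $\R^k$ (being of the form $1 + \sum_{j} i^j X_j^{d'}$ with $d'$ even, hence $H_i \ge 1 > 0$ everywhere). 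Consequently, on the closed ball $\overline{B_k(0,R)}$, as $\zeta$ decreases to $0$ the sets $\RR(\phi^\star(t)) \cap \overline{B_k(0,R)}$ form a monotonically decreasing family whose intersection is exactly $\RR(\phi) \cap \overline{B_k(0,R)}$. In the language of Notation~\ref{not:monotone}, this says precisely that $\RR(\phi^\star(\zeta)) \cap \overline{B_k(0,R)} \searrow \RR(\phi) \cap \overline{B_k(0,R)}$.

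The main steps, in order, would be as follows. First, I would verify the monotonicity condition (a) of Notation~\ref{not:monotone}: for $0 < t < t' \ll 1$ one has $\RR(\phi^\star(t)) \cap \overline{B_k(0,R)} \subset \RR(\phi^\star(t')) \cap \overline{B_k(0,R)}$, because relaxing the bound from $-tH_i$ to $-t'H_i$ (with $H_i > 0$ and $t < t'$) only enlarges the realization of each conjunct, and hence of the whole disjunctive-normal-form formula. Second, I would verify condition (b): $\bigcap_{t>0}\bigl(\RR(\phi^\star(t))\cap\overline{B_k(0,R)}\bigr) = \RR(\phi)\cap\overline{B_k(0,R)}$. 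The inclusion $\supseteq$ is immediate since each atom of $\phi$ implies the corresponding relaxed atom. For $\subseteq$, a point $x$ in the intersection satisfies $P_i(x) \ge -t H_i(x)$ for all $t > 0$ (resp.\ $P_i(x) \le t H_i(x)$), and letting $t \to 0$ forces $P_i(x) \ge 0$ (resp.\ $P_i(x) \le 0$); since $\phi$ is closed (no negations, atoms only of the form $P_i \ge 0, P_i \le 0$), this shows $x \in \RR(\phi)$. Third, having established $\RR(\phi^\star(\zeta)) \cap \overline{B_k(0,R)} \searrow \RR(\phi)\cap\overline{B_k(0,R)}$, I would note that $\RR(\phi)\cap\overline{B_k(0,R)}$ is closed and bounded, apply Lemma~\ref{lem:monotone} with $X = \RR(\phi)\cap\overline{B_k(0,R)}$ and $X^+ = \RR(\phi^\star(\zeta))\cap\overline{B_k(0,R)}$, and conclude that $\E(\RR(\phi)\cap\overline{B_k(0,R)}, \R\la\zeta\ra)$ is a semi-algebraic deformation retract of $\RR(\phi^\star(\zeta))\cap\overline{B_k(0,R)}$, which in particular gives the asserted semi-algebraic homotopy equivalence.

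I do not expect a serious obstacle here; this is essentially a bookkeeping argument that repackages the cited Proposition 13.6 / Lemma 16.17 of \cite{BPRbook2} in the notation of the present paper. The one point requiring a little care is the passage to the limit in step two: one must use that the formula $\phi$ is in disjunctive normal form with only the closed atoms $P_i \ge 0$ and $P_i \le 0$ (as guaranteed by the definition of a closed formula in Notation~\ref{not:sign-condition}), so that the pointwise limit of the relaxed conditions recovers exactly the original closed conditions and no strict inequalities sneak in. A secondary minor point is the positivity and boundedness of $H_i$ on $\overline{B_k(0,R)}$, which is clear from its explicit form. Given these, the proof is a direct invocation of Lemma~\ref{lem:monotone}, exactly as the one-line reference ``See proof of Proposition 13.6 in \cite{BPRbook2}'' suggests.
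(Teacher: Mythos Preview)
Your proposal is correct and follows exactly the paper's approach: the paper's own proof is the one-liner ``Follows from Lemma~\ref{lem:monotone},'' and you have simply spelled out the verification of the $\searrow$ relation needed to invoke that lemma. One small caveat: the paper defines $d'$ only as the smallest integer strictly bigger than the degrees in $\mathcal{P}$, not necessarily even, so your positivity argument for $H_i$ tacitly assumes the standard convention from \cite{BPRbook2} (where $d'$ is taken even); this is a harmless imprecision in the paper rather than a gap in your plan.
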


\begin{proof}
Follows from Lemma~\ref{lem:monotone}.
\end{proof}
}
\subsection{Algorithm for computing simplicial replacement}
We now describe an algorithm that given a tuple of formula $\Phi$ and $m,i \geq 0$, computes
the corresponding poset $\mathbf{P}_{m,i}(\Phi)$, using Algorithm~\ref{alg:contractible-cover} to
compute $\mathcal{I}_{j,k}(\phi)$ and $\mathcal{C}_{j,k}(\phi)$ for different $j$ and $\phi$ which arise
in the course of the execution of the algorithm. 

\begin{algorithm}[H]
\caption{(Computing the poset $\mathbf{P}_{m,i}(\Phi))$}
\label{alg:poset}
\begin{algorithmic}[1]
\INPUT
\Statex{
\begin{enumerate}[(a)]
\item 
$\ell, 0 \leq \ell \leq k$, $m, -1 \leq m \leq \ell$, $i, 0 \leq i \leq m+2$.
\item
A finite set of polynomials $\mathcal{P} \subset \D[\bar{\eps}_0,\ldots,\bar{\eps}_i][X_1,\ldots,X_k]$, where 
$\D$ is an ordered domain contained in a real closed field $\R$.
\item
An element $r \in \D$, $r >0$.
\item
\label{itemlabel:alg:poset:input:d}
For each $j, 0 \leq j \leq N$, a 
$\mathcal{P}$-formula $\phi_j$, 
such that $\RR(\phi_j, \overline{B_k(0,1/r)})$ is 
closed and
homologically $\ell$-connected (and $\ell$-connected if $\R = \mathbb{R}$).
\end{enumerate}
}
 \OUTPUT
 \Statex{
The poset $\mathbf{P}_{m,i}(\Phi)$ (see Definition~\ref{def:poset}), 
where $\Phi$ is defined by $\Phi(j) = \phi_j, j \in [N]$, and the various $\mathcal{I}_{\cdot,k}(\cdot)$
$\mathcal{C}_{\cdot,k}(\cdot)$ are obtained by calls to  Algorithm~\ref{alg:contractible-cover}.
}

\PROCEDURE
\State{ $J \leftarrow  [N]$.}
\If{$m=-1$} 
	\State{
	Output \[
	\mathbf{P}_{-1,i}(\Phi) = 
	 \{(\{j\},\phi) \mid j \in J\},
	\] 
and the order relation to be the trivial one -- namely for 
$j,j' \in  J$,
\[
(\{j\},\emptyset)  \prec (\{j'\},\emptyset)  \Leftrightarrow j=j'.
\]

		}

\Else
    \State{ 
    \[
    \mathcal{P} \gets \mathcal{P} \cup \left\{r^2\sum_{i=1}^{k} X_i^2 - 1\right\}.
    \]
    }
    \For {$j \in J$}
        \State{ 
        \[
        \Phi(j) \gets \Phi(j) \wedge \left(r^2\sum_{i=0}^{k} X_i^2 - 1 \leq 0\right).
        \] 
        }
    \EndFor
	\For {each  subset $I \subset_{\leq m+2} J$}
		\State{Use 
		Definition~\ref{def:J}
		to compute $J_{m,i,I,\Phi}$ and $\Phi_{m,i,I,J}$, using
		Algorithm~\ref{alg:contractible-cover} with input 
		$\mathcal{P}^*(\bar{\mu},\bar{\nu}) \subset \R[\bar\eta][X_1,\ldots,X_k]$ 
		(where $\bar\eta$ denotes the alternating sequence of $\mu_i$'s
		and $\nu_i$'s appearing in Notation~\ref{not:P-star}),
		and the formula 
		$
		 \bigwedge_{j \in I} \Phi(j)^\star(\bar{\mu},\bar{\nu}),
		$
		(noting that 
		$\RR(\bigwedge_{j \in I} \Phi(j)^\star(\bar{\mu},\bar{\nu}))$
		is contained in  $\overline{B_k(0,2/r)}$),
		to compute 
		$\mathcal{I}_{i,k}(\bigwedge_{j \in I} \Phi(j))$ and $\mathcal{C}_{i,k}((\bigwedge_{j\in I} \Phi(j)))$.

		The polynomials appearing in the formulas in 
		$\mathcal{C}_{i,k}((\bigwedge_{j\in I} \Phi(j)))$ have coefficients in
		$\D[\bar{\eps}_0,\ldots, \bar{\eps}_i,\bar{\eps}_{i+1}]$,
		where 
		$\bar{\eps}_{i+1} = (\bar{eta},\bar{\zeta})$, and 
		$\bar{\zeta}$
		is a new tuple of infinitesimals.
		} 
		\label{alg:poset:line:0}
	\EndFor
\algstore{myalg}
\end{algorithmic}
\end{algorithm}
 
\begin{algorithm}[H]
\begin{algorithmic}[1]
\algrestore{myalg}

	\For{$I \subset J, 1 < \card(I) \leq m+2$} 
		\State{Use Algorithm~\ref{alg:poset} recursively with input $\ell, m -\card(I)+1, i+1, \mathcal{P}_I,\Phi_{m,i,I,J},r$, where $\mathcal{P}_I \subset \D[\bar{\eps}_0,\ldots,\bar{\eps}_{i+1}]$ is the set of polynomials occurring in $\Phi_{m,i,I,J}$.}

		\State{
\begin{equation*}
\mathbf{P}_{m,i}(\Phi)  \leftarrow 
\left\{(\{j\},\phi) \mid j \in J\right\}
\cup  \bigcup_{I \subset J, 1 < \card(I) \leq m+2}   \{I\} \times \mathbf{P}_{m -\card(I) +1,i+1}(\Phi_{m,i,I,J}).
\end{equation*}
			}
		\State{
Define partial order $\prec$ on $\mathbf{P}_{m,i}(\Phi)$ as in Definition~\ref{def:poset}.
}
	\EndFor
\EndIf

\COMPLEXITY  The complexity of the algorithm, as well as $\card(\mathbf{P}_{m,i}(\Phi))$, are bounded by 
\[
 (N s d)^{k^{O(m)}},
\]
where $s = \card(\mathcal{P})$, and $d = \max_{P \in \mathcal{P}} \deg(P)$.
\end{algorithmic}
\end{algorithm}

\begin{proof}[Proof of correctness]
The algorithm follows Definition~\ref{def:poset}. The correctness
of the algorithm follows from 
Lemma~\ref{16:lem:star}, Lemma~\ref{lem:general-position}, and the
correctness of  Algorithm~\ref{alg:contractible-cover}.
\end{proof}

\begin{proof}[Complexity analysis]
The bound on  $\card(\mathbf{P}_{m,i}(\Phi))$ is a consequence of Theorem~\ref{thm:main:complexity}.
The complexity of the algorithm follows from the complexity of the Algorithm~\ref{alg:contractible-cover} and
an argument as in the proof of Theorem~\ref{thm:main:complexity}. 

There is one additional 
point to note that in the recursive calls algorithm the arithmetic operations take place in a larger ring,
namely - $\D[\bar\eps_0,\ldots,\bar\eps_{m+2}]$.

It follows from the complexity of  Algorithm~\ref{alg:contractible-cover} that the number of different infinitesimals occurring in 
each polynomial that is computed in the course of Algorithm~\ref{alg:poset} is bounded by
$k^{O(m)}$,  and these infinitesimals occur with degrees bounded by $d^{k^{O(m)}}$.
Hence each arithmetic operation involving the coefficients with these polynomials 
costs $\left(d^{k^{O(m)}}\right)^{k^{O(m)}} = d^{k^{O(m)}}$ arithmetic operations in the ring $\D$. This does not affect the asymptotics of the complexity,
where we measure arithmetic operations in the ring $\D$. 
\end{proof}

\begin{remark}
\label{rem:alg:poset}
Suppose we define (following the same notation as in 
Properties~\ref{property:thm:main} and 
\ref{property:thm:main'}
and Algorithm~\ref{alg:poset}) 
for $\phi \in \mathcal{F}_{\R_i,k}$, 
\begin{eqnarray*}
\mathcal{I}_{i,k}(\phi) &=& \card(I) -1, \\
\mathcal{C}_{i,k}(\phi) &=& (\theta_\alpha)_{\alpha \in I},
\end{eqnarray*}
where $(\theta_\alpha)_{\alpha \in I}$ is the output of 
Algorithm~\ref{alg:contractible-cover} with input the set of polynomials
appearing in the definition of 
$\phi^*(\bar{\mu},\bar{\nu})$ (see Notation~\ref{not:phi-star}), the closed formula $\phi^*(\bar{\mu},\bar{\nu})$, 
and
$R$ set to $1/r$
(as in Line~\ref{alg:poset:line:0} of Algorithm~\ref{alg:poset}).

Then it follows from the correctness of Algorithm~\ref{alg:contractible-cover}, 
that
(denoting by 
$\R_i = \R\la\bar{\eps}_0,\ldots,\bar{\eps}_i \ra$ as in Algorithm~\ref{alg:poset})
the tuple
\[
\left((\R_i)_{i \geq 0},1/r,k, (\mathcal{I}_{i,k})_{i \geq 0}, (\mathcal{C}_{i,k})_{i \geq 0} \right)
\]
satisfies  the homological $\ell$-connectivity property over $\R$
(resp. $\ell$-connectivity property if $\R = \mathbb{R}$) for every $\ell \geq 0$
(see Property~\ref{property:thm:main} and 
Property~\ref{property:thm:main'}.
\end{remark}

\begin{algorithm}[H]
\caption{(Simplicial replacement)}
\label{alg:simplicial-replacement}
\begin{algorithmic}[1]
\INPUT
\Statex{
\begin{enumerate}[(a)]
\item
A finite set of polynomials $\mathcal{P} \subset \D[X_1,\ldots,X_k]$ where $\D$ is an ordered domain contained  in a real closed field $\R$.
\item
An integer $N \geq 0$, and for each $i \in [N]$, a $\mathcal{P}$-closed formula $\phi_i$.
\item 
$\ell, 0 \leq \ell \leq k$.
\end{enumerate}
 }

 \OUTPUT
 \Statex{
A simplicial complex $\Delta$ and for each $I \subset [N]$ a subcomplex $\Delta_I \subset \Delta$ such that there is a diagrammatic homological $\ell$-equivalence
\[
(I \mapsto \Delta_I)_{I \subset [N]} \overset{h}{\sim}_\ell \Simp^{[N]}(\RR(\Phi)),
\]
where $\Phi(i) = \phi_i, i \in [N]$.
In case $\R = \mathbb{R}$, then the simplicial complex $\Delta$ and the subcomplexes $\Delta_I$ satisfy the
stronger property, namely:
 \[
(I \mapsto \Delta_I)_{I \subset [N]} \sim_\ell \Simp^{[N]}(\RR(\Phi)),
\]
where $\Phi(i) = \phi_i, i \in [N]$.
}

\PROCEDURE
\State{ 
Let $0< \delta < 1$ be an infinitesimal.
} 
 \State{$\mathcal{P} \gets \mathcal{P} \cup \{4\cdot\delta^2 \cdot (X_1^2 +\cdots+X_k^2) -1\}$.}
 \For {$0 \leq i \leq N$}
        \State{$\phi_i \gets \phi_i \wedge (4 \cdot \delta^2 \cdot (X_1^2 +\cdots+X_k^2) -1 \leq 0)$.}
 		\State{
 		\label{line:alg:simplicial-replacement:0}
 		Call Algorithm~\ref{alg:contractible-cover} 
 		 with input 
 		 $\mathcal{P}^*(\bar\mu,\bar\nu)$ (see Notation~\ref{not:P-star})
 		 the formula
    $\phi_{i}^\star(\bar\mu,\bar\nu) 
    $ (see Notation~\ref{not:phi-star})
    as input,
    and let $\Phi_i = (\phi_{i,1},\ldots,\phi_{i,N_i})$ be the output. }
    \State{$\mathcal{P}_i \leftarrow \mbox{ the set of polynomials appearing in the formula $\Phi_i$}$.}
\EndFor

\State{$\mathcal{P}' \leftarrow  \bigcup_{i \in [N]} \mathcal{P}_i$.}
\For{ $0 \leq i \leq n$}
\State{$J_i \gets \{(i,j) \mid 1 \leq j \leq N_i\}$.}
\EndFor
\State{$J \gets  \bigcup_{i \in [N]} J_i$.}
\State {
Let 
$\Psi \in (\mathcal{F}_{\R\la\delta,\bar\eta,\bar\zeta\ra,k})^J$ be defined by
$\Psi((i,j)) = \phi_{i,j}$.}
\State{
\label{line:alg:simplicial-replacement:1}
Call Algorithm~\ref{alg:poset} with input 
\[
(\ell+1, m+1, 0, \mathcal{P}', J, \delta, \Psi),
\] and  
let $\mathbf{P}_{m,0}(\Psi)$ denote the output.}
\State{Output the simplicial complex $\Delta(\mathbf{P}_{m,0}(\Psi))$, and for each subset $I \subset [N]$,
the subcomplex $\Delta(\mathbf{P}_{m,0}(\Psi|_{\bigcup_{i\in I}J_i}))$.}
\COMPLEXITY
The complexity of the algorithm is 
bounded by $(sd)^{k^{O(\ell)}}$, where $s =\card(\mathcal{P})$ and $d = \max_{P \in \mathcal{P}} \deg(P)$.

\end{algorithmic}
 \end{algorithm}

\begin{proof}[Proof of correctness]
Observe that the image of the realization of each of the formulas $\phi_{i,j}$ 
obtained in Line~\ref{line:alg:simplicial-replacement:0}
under the $\lim_{\bar\eta}$ map is contained
in $\overline{B_k(0,1/2\delta)}$. This implies that the 
realization of each of the formulas $\phi_{i,j}$ is contained in $\overline{B_k(0,\delta)}$.
Thus, in the call to Algorithm~\ref{alg:poset} in 
Line~\ref{line:alg:simplicial-replacement:1}, the input satisfies
property \eqref{itemlabel:alg:poset:input:d} of the input
specification of Algorithm~\ref{alg:poset} with $r = \delta$.

The correctness of the algorithm now follows from
Lemma~\ref{16:lem:star}, Lemma~\ref{lem:general-position},
the correctness of Algorithm~\ref{alg:poset}, 
Remark~\ref{rem:alg:poset},
and Theorems~\ref{thm:main} and \ref{thm:main'}.
\end{proof}

\begin{proof}[Complexity analysis]
The complexity bound follows from the complexity bounds of  Algorithms~\ref{alg:contractible-cover} and \ref{alg:poset}.
\end{proof}

\begin{proof}[Proofs of Theorems~\ref{thm:alg} and  \ref{thm:alg'}]
Both theorems now follows from the correctness and the complexity analysis Algorithm~\ref{alg:simplicial-replacement}.
\end{proof}

\section{Future work and open problems}
\label{sec:conclusion}
We conclude by stating some open problems and possible future directions of research in this area.
\begin{enumerate}[1.]
\item 
It is an interesting problem to try to make the poset $\mathbf{P}_{m,i}(\Phi)$ in Theorem~\ref{thm:main} smaller in size and more efficiently computable. For instance,
in Theorem~\ref{thm:main:complexity} one should be able to improve the dependence on
$\card(J)$.
\item
There are some recent work in algorithmic semi-algebraic geometry where algorithms have been developed for computing the first few Betti numbers of semi-algebraic subsets of $\R^k$
having special properties. For example, in \cite{BR2020} the authors give an algorithm to compute 
the first $\ell$ Betti numbers of semi-algebraic subsets of $\R^k$ defined by \emph{symmetric} polynomials of
degrees bounded by  some constant $d$. The complexity of the algorithm is doubly exponential in
both $d$ and $\ell$ (though polynomial in $k$ for fixed $d$ and $\ell$). This algorithm uses
semi-algebraic triangulations which leads to the doubly exponential complexity. It is an interesting problem to investigate whether applying the efficient simplicial replacement of the current paper
the dependence on $d$ and $\ell$ can be improved. 
\end{enumerate} 

\section*{Acknowledgements}
The authors are grateful to the referees for their detailed reading and
many helpful comments and corrections. 
In particular, we thank one of the referees
for pointing out an error in an example in the previous version of the paper.
The example has been omitted in this version.

\bibliographystyle{amsplain}
\bibliography{master}

\appendix
\section{Proof of Lemma~\ref{16:lem:star}}
\label{sec:AppendixA}

\begin{proof}[Proof of Lemma~\ref{16:lem:star}]
We will denote for $0 \leq i \le s$
\begin{eqnarray*}
\R_i' &=& \R\la\mu_s,\nu_s, \ldots, \mu_i\ra, \\
\R_i &=& \R\la\mu_s,\nu_s, \ldots, \mu_i,\nu_i\ra.
\end{eqnarray*}

Note that 
\[
\R' = \R_0 \supset \R_0' \supset  \cdots \R_s \supset \R_s' \supset \R. 
\]

For $0 \leq i \leq s$ we define inductively:
\begin{eqnarray*}
S_0 &=& S',
\end{eqnarray*}
and for $i > 0$,
\begin{eqnarray*}
S_i^- &=& \lim_{\nu_i} S_i,\\
S_{i+1} &=& \lim_{\mu_i} S_i \;\;(= \lim_{\mu_i} S_i^-).
\end{eqnarray*}

The lemma will follow from the following two claims.

\begin{claim}
\label{proof:prop:claim:1}
For each $i, 0 \leq i \leq s$,
\[
S_i \searrow S_i^-.
\]
\end{claim}

\begin{proof}
Easy.
\end{proof}

\begin{claim}
\label{proof:prop:claim:2}
For each $i, 0 \leq i \leq s$,
\[
S_i^- \searrow S_{i+1}.
\]
\end{claim}
\begin{proof}
We will prove that 
\[
\E(S_{i+1}, \R_i') = S_i^-,
\]
which suffices to prove the claim.

It is obvious that
\[
\E(S_{i+1}, \R_i') \supset S_i^-.
\]

We now show that 
\[
\E(S_{i+1}, \R_i') \subset S_i^-.
\]
Define, 
\begin{eqnarray*}
S_{i+1}^{(<i)} &=& \bigcup_{\sigma \in \Sigma_{\phi}, \level(\sigma) < i}\lim_{\mu_i} \RR(\overline{\sigma}, \E(B,\R_{i+1})),\\
S_{i+1}^{(=i)} &=& \bigcup_{\sigma \in \Sigma_{\phi}, \level(\sigma) = i}\lim_{\mu_i} \RR(\overline{\sigma}, \E(B,\R_{i+1})),\\
S_{i+1}^{(>i)} &=& \bigcup_{\sigma \in \Sigma_{\phi}, \level(\sigma) > i}\lim_{\mu_i} \RR(\overline{\sigma}, \E(B,\R_{i+1})).
\end{eqnarray*}

It is easy to see that,
$$
\displaylines{
\E(S_{i+1}^{(<i)}, \R_i') \subset S_i^-, \cr
\E(S_{i+1}^{(>i)}, \R_i') \subset S_i^-.
}
$$
It remains to prove that 
\[
\E(S_{i+1}^{(=i)}, \R_i') \subset S_i^-.
\]
Let $\sigma \in \Sigma_{\phi}$, $\level(\sigma) = i$, and 
$x_0 \in \lim_{\mu_i} \RR(\overline{\sigma}, \E(B,\R_{i+1}))$.

Let $\mathcal{P}_0 = \{P \in \mathcal{P} \mid \lim_{\mu_i} P(x_0) = 0\}$.
If $\card(\mathcal{P}_0) = i$, then $x_0 \in S_i^-$ and we are done.

Otherwise, $\sigma_0 = \mathrm{sign}(\mathcal{P}(x_0)) \in \Sigma_{\phi}$ (using the 
fact that $S$ is closed). 
Let $x_1 = \lim_{\mu_{\level(\sigma_0)}} x_0$, 
$\sigma_1 = \mathrm{sign}(\mathcal{P}(x_1))$. 
If $\sigma_1 \neq  \sigma_0$, then define 
$x_2 = \lim_{\mu_{\level(\sigma_1)}} x_1$. Continue in this way and 
define $x_0,x_1,x_2, \ldots$, till 
$\sigma_j = \sigma_{j+1}$.  
Notice that $\sigma_0, \ldots,\sigma_j \in  \Sigma_{\phi}$.
Consider the point $x_j$.
Then, $x_j = \lim_{\mu_{\level(\sigma_{j-1})}} x_0$, and 
\[
x_0 \in  \RR(\overline{\sigma_j}, \E(B,\R_{i}')) \subset S_i^-,
\]
since $\sigma_j \in \Sigma_{\phi}$.
This ends the proof of the claim.
\end{proof}

It follows from Claims~\ref{proof:prop:claim:1} and 
\ref{proof:prop:claim:2} that $S' \searrow \lim_{\mu_{s}}  S' = S_{s+1}$.
Now it is obvious from the definition of $\bar\sigma$, that for
each $\sigma \in \Sigma_{\phi}$,
\[
\RR(\bar\sigma, \E(B,R')) \cap \R^k = \RR(\sigma,B).
\]

It follows that 
\[
S' \cap \R^k = S.
\]

Finally, since $S' \searrow S_{s+1} \subset \R^k$, it follows that
$S' \cap \R^{k} = S_{s+1}$, and hence $S_{s+1} = S$.
This implies that $S' \searrow S$.

Finally, it follows from Lemma~\ref{lem:monotone} that
$\E(S,\R')$ is a semi-algebraic deformation retract of $S'$.
\end{proof}

\end{document}